\author{Florin Ambro} 
\address{Institute of Mathematics ``Simion Stoilow'' of the Romanian
Academy\\
P.O. BOX 1-764, RO-014700 Bucharest\\ 
Romania.}
\email{florin.ambro@imar.ro}
\newcommand{\isoto}{{\overset{\sim}{\rightarrow}}}
\newcommand{\Q}{{\mathbb Q}}
\newcommand{\Z}{{\mathbb Z}}
\newcommand{\R}{{\mathbb R}}
\newcommand{\bP}{{\mathbb P}} 
\newcommand{\bA}{{\mathbb A}} 
\newcommand{\cA}{{\mathcal A}}
\newcommand{\cB}{{\mathcal B}}
\newcommand{\cO}{{\mathcal O}}
\newcommand{\bF}{{\mathbf F}}
\newcommand{\fm}{{\mathfrak m}}
\newcommand{\Aut}{\operatorname{Aut}}
\newcommand{\Bs}{\operatorname{Bs}}
\newcommand{\Conv}{\operatorname{Conv}}
\newcommand{\emb}{\operatorname{emb}}
\newcommand{\Hom}{\operatorname{Hom}}
\newcommand{\id}{\operatorname{id}}
\newcommand{\Int}{\operatorname{int}}
\newcommand{\Ker}{\operatorname{Ker}}
\newcommand{\lcm}{\operatorname{lcm}}
\newcommand{\length}{\operatorname{length}}
\newcommand{\mld}{\operatorname{mld}}
\newcommand{\relint}{\operatorname{relint}}
\newcommand{\vol}{\operatorname{vol}}
\theoremstyle{plain}
\newtheorem{thm}{Theorem}[section]
\newtheorem{lem}[thm]{Lemma}
\newtheorem{cor}[thm]{Corollary}
\theoremstyle{definition}
\newtheorem{defn}[thm]{Definition}
\newtheorem{exmp}[thm]{Example}
\newtheorem{rem}[thm]{Remark}
\theoremstyle{remark}
\begin{document}

\bibliographystyle{amsalpha+}
\title{On toric Fano fibrations}

\dedicatory{Dedicated to Professor Vyacheslav V. Shokurov, on the occasion of his seventieth birthday}

\maketitle

\begin{abstract} A. Borisov classified into finitely many series the set of isomorphism classes of germs of toric $\Q$-factorial singularities, of fixed dimension and with minimal log discrepancy over the special point bounded from below by a fixed real number. We extend this classification to germs of toric Fano fibrations, possibly not $\Q$-factorial.
As an application, we verify in the toric setting a conjecture proposed by V. V. Shokurov on the existence of bounded complements.
\end{abstract} 



\footnotetext[1]{2020 Mathematics Subject Classification. 
	Primary: 14M25. Secondary: 14B05.}

\footnotetext[2]{Keywords: singularities of toric varieties, minimal log discrepancies, complements.}


\section{Introduction}


 The explicit classification of terminal $3$-fold singularities played an important role in the birational classification of projective $3$-folds. Based on earlier computer-generated attempts to explicitly classify toric terminal $4$-fold singularities, Borisov~\cite{Bor99} obtained a qualitative classification of toric $\Q$-factorial singularities $P\in X$ with $\dim X=d$ and $\mld X\ge t$ (i.e. log discrepancies of geometric valuations of $X$ are at least $t$). The latter correspond to finite subgroups of the real torus $\R^d/\Z^d$ which intersect the set $\{x\in [0,1)^d; \sum_{i=1}^dx_i< t\}$ only at the origin, and Borisov showed that they fall into {\em finitely many series}. Our motivation is to understand these series, find their geometric meaning, and possibly extend them to non-toric singularities. If the latter is indeed true, it could have important applications in the birational classification of projective varieties, since in the toric case the standard (local) conjectures of Shokurov on minimal log discrepancies do follow from the finiteness of series.
 
  Borisov's definition of series simplifies if the global condition $\mld X\ge t$ is replaced by the local condition $\mld_P X\ge t$ (i.e. log discrepancies of geometric valuations of $X$ with center $P$ are at least $t$).
 By Lawrence~\cite{Law91}, the subgroups of $\R^d/\Z^d$ which avoid 
 the image of the open set $\{x\in (0,1)^d; \sum_{i=1}^dx_i< t\}$ have only finitely many maximal elements $G$ with respect to inclusion (called {\em series}), and Borisov showed that a toric $\Q$-factorial affine variety $X$ with (unique) invariant point $P$ satisfies $\dim X=d$ and $\mld_P X\ge t$ if and only if it corresponds to a finite subgroup of $\R^d/\Z^d$ contained in some $G$. We only consider such local series in this paper.

 A first observation is that series of toric $\Q$-factorial singularities have a dual interpretation, which make sense even in the non-$\Q$-factorial case. We identify a series with a closed subgroup $\Z^d\le G\le \R^d$.  A basis of the dual abelian group $G^*\subset (\Z^d)^*$ defines a projection $\Phi\colon \R^d\to \R^p$ such that $G=\Phi^{-1}(\Z^p)$ (i.e. $G=G^{**}$).
 Denote $U=\{x\in \R_{\ge 0}^d;\sum_{i=1}^dx_i\le 1\}$. The condition 
 $G\cap \Int(tU)=\emptyset$ is equivalent to
 $\Z^p\cap \Int(t\Phi(U))=\emptyset$. The series $G$ belongs to a finite family if and only if $t\Phi(U)$ is contained in a bounded box. 
 Therefore Borisov's existence of only finitely many local series can be restated in terms of projections as follows:
 an affine toric $\Q$-factorial variety $P\in X=T_N\emb(\sigma)$ with $\dim X=d$ satisfies $\mld_P X\ge t$ if and only if there exists a surjective homomorphism of lattices $\Phi\colon N\to N'$ such that 
 $N'\cap \Int(t\Phi(U))=\emptyset$ and the pair $(N',t\Phi(U))$ is bounded
 (i.e. the convex set is contained in a bounded box after choosing some basis of the lattice). Here $U=\Conv(\{0\}\cup\sigma(1))$ coincides with the polar set of the anti-canonical moment polytope $\square_{-K_X}\subset M_\R$, and is characterized in terms of valuations as follows: the primitive lattice points in the interior of $hU$ are in one to one correspondence with toric valuations $E$ of $X$ with center $P$ such that the log discrepancy $a_E(X)$ is strictly less than $h$. This statement makes sense and holds even if $X$ is not $\Q$-factorial.
 
 We call $\Phi\colon (N,tU)\to (N',t\Phi(U))$ a {\em $t$-lc reduction} of $P\in X$. Whereas $0$ is an extremal point of $U$, $0$ is only a boundary point of $\Phi(U)$. Therefore $(N',t\Phi(U))$ may not be associated to a toric germ, but rather to a germ of toric proper fibration $f'\colon (X',B')\to Y'\ni P'$
 where $\dim X'=\dim N'$, $B'$ is an invariant boundary with standard coefficients, $Y'$ affine with invariant point $P'$, $-K_{X'}-B'$ is $f'$-semiample 
 and $\mld_{{f'}^{-1}P'}(X',B')\ge t$. Moreover, $\Phi(U)=U'$ admits three equivalent definitions, same as $U$. We will show that $t$-lc reductions exist in this setting too. Our main result is 
 
 \begin{thm}\label{mth}
 Fix $d\ge 1$, $t>0$ and a DCC set $\cB\subset [0,1]$. Let $f\colon X\to Y\ni P$ be a toric fibration with affine base $Y$ having positive dimension and invariant point $P$. Suppose $\dim X=d$. Let $\sum_i E_i$ be the complement of the torus inside $X$, let $B=\sum_i b_i E_i$ with $b_i\in \cB$ for all $i$. Write $X=T_N\emb(\Delta)$, let $\square=\square_{-K-B}\subset M_\R$ be the moment polytope of the torus invariant $\R$-Weil divisor $-K-B$ and $U=\square^* \subset N_\R$ its polar set. The following are equivalent:
 	\begin{itemize}
 		\item[a)] There exists an open neighborhood $P\in V\subseteq Y$ and a boundary $B^+_{f^{-1}V}\ge B|_{f^{-1}V}$ on $f^{-1}V$ such that  
 		$K_{f^{-1}V}+B^+_{f^{-1}V}\sim_\R 0$ and $\mld_{f^{-1}P}(f^{-1}V,B^+_{f^{-1}V})\ge t$.
 		\item[b)] There exists a non-zero surjective homomorphism of lattices $\Phi\colon N\to N'$ such that the image $U'=\Phi(U)\subset N'_\R$
 		is a compact polytope, $N'\cap \Int(tU')=\emptyset$ and $(N',tU')$ is bounded.
 	\end{itemize}
 \end{thm}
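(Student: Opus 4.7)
The plan is to translate both conditions into the existence of a torus-invariant element $m^+\in M_\R$ satisfying a prescribed system of linear inequalities, and to pass between such $m^+$ and projections $\Phi$ by polar duality. A torus-invariant $\R$-complement with $K_X+B^+\sim_\R 0$ has the form $B^+=\sum_i(1+\langle m^+,v_i\rangle)E_i$, so the conditions in (a) become: $m^+\in\square$ (yielding $B^+\geq B$), $\langle m^+,v_i\rangle\leq 0$ for all rays $v_i$ (so $b_i^+\leq 1$), and, via the toric formula $a_v(X,B^+)=-\langle m^+,v\rangle$, $\langle m^+,v\rangle\leq -t$ for every primitive $v\in N$ whose image $\pi(v)\in N_Y$ lies in $\Int\sigma_Y$, where $\pi\colon N\to N_Y$ is the lattice map of $f$ and $\sigma_Y\subset N_{Y,\R}$ is the defining cone of the affine base $Y$.

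For (a) $\Rightarrow$ (b), I would first reduce to a torus-invariant $B^+$ by averaging over the compact form of the torus acting on $f^{-1}V$; on a toric variety the mld is achieved at toric valuations, so the mld bound is preserved. The affineness of $Y$ together with $P\in V$ upgrade $K_X+B^+\sim_\R 0$ from $f^{-1}V$ to all of $X$, producing a global $m^+$ satisfying the inequalities above. I would then define $\Phi\colon N\twoheadrightarrow N'$ as the surjection dual to $M'\hookrightarrow M$, where $M'=L\cap M$ for $L\subseteq M_\R$ the smallest rational linear subspace with $m^+\in L$ and $L\perp v_i$ for every $i$ with $b_i=1$. Compactness of $U'=\Phi(U)$ then follows because $\Phi$ kills the recession cone of $U$, which is generated by $\{v_i:b_i=1\}$; the identity $(U')^*=(\Phi^*)^{-1}(\square)$ together with the inequalities on $m^+$ yields $N'\cap\Int(tU')=\emptyset$; and boundedness of $(N',tU')$ follows from compactness of $U'$ together with rationality of $M'$.

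For (b) $\Rightarrow$ (a), the plan is to produce $m^+\in\Phi^*(M'_\R)\cap\square$ satisfying the required linear inequalities via Farkas-type linear programming duality. Under polar duality, $(U')^*$ corresponds to $\Phi^*(M'_\R)\cap\square$, a full-dimensional compact polytope in $\Phi^*(M'_\R)$ containing $0$ in its relative interior (using compactness of $U'$); the condition $N'\cap\Int(tU')=\emptyset$ translates into the feasibility of the linear system. The additional boundary constraints $\langle m^+,v_i\rangle\leq 0$ are then ensured by a suitable choice of $m^+$ within this feasible region, using that the directions $v_i$ are controlled by compactness of $U'$. The resulting $B^+=\sum_i(1+\langle m^+,v_i\rangle)E_i$ is a torus-invariant $\R$-complement on $X$ with $B^+\geq B$ and $\mld_{f^{-1}P}(X,B^+)\geq t$, hence restricts to the required complement over any neighborhood $V$ of $P$.

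The main obstacle is careful bookkeeping around the degenerate boundary components. Rays $v_i$ with $b_i=1$ generate the recession cone of $U$ and must lie in $\ker\Phi$, motivating the orthogonality condition in the construction of $L$; the set of primitive $v\in N$ with $\pi(v)\in\Int\sigma_Y$ is not a sublattice, so the mld inequalities cannot be read off directly from lattice points in $tU'$ but only through polar duality on the rational slice $\Phi^*(M'_\R)\cap\square$; and when $\cB$ contains irrationals, $m^+$ itself may be irrational, forcing the rational construction of $L$ to be the smallest rational enlargement still containing $m^+$. Verifying that the translation between ``$N'\cap\Int(tU')=\emptyset$'' and the mld inequalities is exact (not merely one-sided) is the crux of the argument.
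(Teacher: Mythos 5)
The crux of this theorem is proving that the pair $(N',tU')$ in (b) can be made \emph{bounded}, i.e.\ that after a suitable change of basis $tU'$ fits inside a box whose side length depends only on $d$, $t$ and the DCC set $\cB$. Your proposal never addresses this and in fact asserts something false: ``boundedness of $(N',tU')$ follows from compactness of $U'$ together with rationality of $M'$.'' A compact rational polytope with no interior lattice points can be arbitrarily long and thin, so nothing about compactness or rationality constrains the successive minima of $(N',tU'-tU')$. This is exactly why the paper devotes all of Section 2 to geometry of numbers. The paper's route is: project by $g\colon N\to N_0=N/(N\cap(\sigma_0-\sigma_0))$ to kill the recession cone and obtain a compact lattice-point-free polytope $tU_0$; then apply the generalization of Nill--Ziegler's theorem (Theorem~\ref{SerPoly}, which rests on the generalized Pikhurko bound Theorem~\ref{GenPik}): such a $tU_0$ is either already bounded, or admits a \emph{further} projection onto a lower-dimensional lattice-point-free polytope, and iterating this descends to a bounded pair. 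Your construction of $\Phi$ from the smallest rational subspace $L\subseteq M_\R$ containing $m^+$ and orthogonal to $\{e_i:b_i=1\}$ is a single, non-iterative projection with no mechanism to control the width of the image. In the generic irrational case $L$ can be all of $(\sigma_0-\sigma_0)^\perp$, so your $\Phi$ is precisely $g$, which is the starting point of the paper's inductive argument rather than its conclusion.

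Two secondary issues. First, establishing $N'\cap\Int(tU')=\emptyset$ from the inequalities $\langle m^+,v\rangle\le -t$ on primitive $v\in N\cap\Int|\Delta|$ requires showing that every lattice point of $N'\cap\Int\sigma'$ lifts to a lattice point of $N\cap\Int|\Delta|$; surjectivity of $\Phi$ on lattices gives a lift in $N$, but not one lying in $|\Delta|$, and one must correct by elements of $\ker\Phi$ using that $\ker\Phi$ contains the span of $\sigma_0$ and that $\sigma_0\subseteq tU$. The paper carries out the analogous computation for $g$ inside the proof of Theorem~\ref{bndser}. Second, the paper does not assume $-K-B$ (or $K+B$) is $\R$-Cartier; Lemma~\ref{chRco} reduces the general case to the $f$-semiample case by a $\Q$-factorialization and a toric MMP, showing that $\square$ and $U$ are preserved, and only afterwards does the translation $(C_t)\iff N\cap\Int(tU)\subseteq\{0\}$ go through polar duality. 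Your passage to the torus-invariant functional $m^+$ implicitly assumes this reduction has been done. Finally, for the direction (b) $\Rightarrow$ (a) the linear programming argument is unnecessary: since $\Phi$ maps $\Int(tU)$ into $\Int(tU')$ and $\Phi(N)\subseteq N'$, condition (b) gives $N\cap\Int(tU)=\emptyset$ directly, and this is equivalent to (a) by Lemma~\ref{chRco}.
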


A stronger statement holds in fact, where we only require that $\dim X$ is fixed and the ratios $\frac{1-b_i}{t}$ belong to a fixed ACC set (Theorem~\ref{bndser}).

Since $Y$ is affine toric, a) holds if and only if it holds with $V=Y$. If some coefficient of $B$ is $1$ then $U$ is not compact, hence the compact 
condition in b) is not redundant. We say that $B$ has {\em $r$-hyperstandard coefficients} if they are of the form $1-\frac{x}{q}$, where $x\in \frac{1}{r}\Z\cap [0,1]$ and $q\in \Z_{\ge 1}$. If $B$ has $r$-hyperstandard coefficients and $t$ is fixed and rational, Theorem~\ref{mth} can be deduced from Nill-Ziegler's result~\cite{NZ11} stating that polytopes in $\R^d$ with extremal points rational of bounded index and without interior lattice points are either 
finite (up to lattice translations and change of basis of the lattice), or they project onto a smaller dimensional polytope with the same properties.
In general, Theorem~\ref{mth} follows from a generalization of Nill-Ziegler's theorem in our setting (Theorem~\ref{SerPoly}).

One application of Theorem~\ref{mth} is the toric case of a conjecture of Shokurov:

\begin{thm}\label{mapp}
Let $f\colon X\to Y$ be a toric fibration with affine base $Y$, let $B$ be an invariant boundary on $X$ with $r$-hyperstandard coefficients, suppose 
$-K-B$ is $f$-semiample. Let $t>0$.
\begin{itemize}
	\item[a)] Suppose $Y$ has an invariant closed point $P$ and $\mld_{f^{-1}P}(X,B)\ge t$. Then there exists $B^+\ge B$ such that $n(K+B^+)\sim 0$ and 
	$\mld_{f^{-1}P}(X,B^+)\ge t$, where $n$ is a positive integer depending only on $\dim X,r,t$.
	\item[b)] Suppose $\mld(X,B)\ge t$ and $t<1$. Then there exists $B^+\ge B$ such that $n(K+B^+)\sim 0$ and 
	$\mld(X,B^+)\ge t$, where $n$ is a positive integer depending only on $\dim X,r,t$.
\end{itemize} 
\end{thm}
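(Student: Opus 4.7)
The plan is to combine Theorem~\ref{mth} (in its stronger form Theorem~\ref{bndser}) with a uniform bound on the denominators of rational points in the target of the $t$-lc reduction.

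For part~(a), apply Theorem~\ref{mth} to $(f, X, B, P)$. This produces simultaneously a $t$-lc reduction $\Phi\colon (N, tU) \to (N', tU')$ and a real complement $B_0^+ \geq B$ satisfying $K + B_0^+ \sim_\R 0$ and $\mld_{f^{-1}P}(X, B_0^+) \geq t$. Under the $r$-hyperstandard hypothesis the ratios $(1-b_i)/t$ lie in a fixed ACC set, so by Theorem~\ref{bndser} the target pair $(N', tU')$ falls into one of finitely many isomorphism classes depending only on $(\dim X, r, t)$.

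Combinatorially, a complement $B^+ \geq B$ with $K + B^+ \sim_\R 0$ is encoded by a moment vector $m \in M_\R$ through $b^+_i = 1 + \langle m, v_i\rangle$, and the triviality of $K + B^+$ over $Y$ forces $m$ to lie in $\Phi^*(M'_\R)$; write $m = \Phi^*(m')$. The conditions $B^+ \geq B$ and $\mld_{f^{-1}P}(X, B^+) \geq t$ translate to finitely many linear inequalities on $m'$ whose coefficients and constant terms come from the combinatorial data of $(N', U')$ and from the values $(1-b_i)/t$. These inequalities cut out a non-empty rational polytope $Q' \subset M'_\R$ (non-emptiness is witnessed by the $m_0'$ corresponding to $B_0^+$). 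Since the target pair belongs to a finite family, the possible polytopes $Q'$ also form a finite family of rational polyhedra; clearing denominators of all vertices of all polytopes in this finite list yields a uniform integer $n$ depending only on $(\dim X, r, t)$, and any $m' \in Q' \cap \tfrac{1}{n}M'$ produces the required $B^+$ with $n(K+B^+) \sim 0$.

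Part~(b) is deduced from~(a) by a local-to-global argument. The global condition $\mld(X, B) \geq t$ localizes to $\mld_{f^{-1}P}(X, B) \geq t$ at every torus-invariant closed point $P \in Y$ (or, after passing to invariant affine opens, at every invariant orbit of $Y$); the assumption $t < 1$ removes any constraint coming from non-invariant points, whose log discrepancies are automatically $\geq 1$. Applying~(a) at each such $P$ gives local moment vectors $m_P \in \tfrac{1}{n}M$, all of which represent the same $\R$-divisor $K + B^+$ and hence must agree on overlaps of invariant affine charts of $X$, patching into a single global $m \in \tfrac{1}{n}M$.

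The hard part will be controlling the denominators of the vertices of $Q'$ uniformly across the finite family of targets. Theorem~\ref{bndser} supplies both the finiteness of isomorphism classes and bounded denominators for the vertices of $U'$, which then bound the denominators of the inequalities defining $Q'$. Once this uniform control is established, the rest is combinatorial bookkeeping; a secondary technical point in part~(b) is to verify that the local patching respects the mld bound globally, which again reduces to the toric combinatorics once the local moment vectors are constructed.
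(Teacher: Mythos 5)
Your proposal parameterizes complements by a single moment vector $m$, i.e.\ you only consider \emph{torus-invariant} $B^+$ of the form $B^+ = \Sigma_X + \operatorname{div}(\chi^m)$. This is a genuine gap: such $B^+$ do not always exist with the required minimal log discrepancy bound, even when non-invariant complements do. The paper's proof (Theorems~\ref{bndscoP} and~\ref{bndscogl}) never restricts to invariant $B^+$; instead it takes $D$ a \emph{general member} of a carefully chosen invariant linear system, so that $B^+ = B + \frac{1}{n}D$, and the generality of $D$ is exactly what achieves the optimal log discrepancy $a_{E_e}(X,B+\frac{1}{n}D) = -h_{\square'}(\Phi(e))$ at every toric valuation $E_e$ simultaneously. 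An invariant $B^+$ coming from a single $m' \in \square'$ gives only $a_{E_e}(X,B^+) = \langle -m',\Phi(e)\rangle$, which is $\le -h_{\square'}(\Phi(e))$; the supremum over $m'\in\square'$ is attained only by a general member of the linear system, not by any single character.

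For part~(b) the failure is concrete. Take $X = \bP^1\times\bA^1 \to Y = \bA^1$, $B = 0$, $t = \frac{1}{2}$. Then $\mld(X,0) = 1 \ge t$ and $-K$ is $f$-semiample. An invariant $B^+$ would be $B^+ = (1+m_1)E_1 + (1-m_1)E_2 + (1+m_2)E_3$ for some $m=(m_1,m_2)\in M_\R$, where $E_1 = \{0\}\times\bA^1$, $E_2 = \{\infty\}\times\bA^1$, $E_3 = \bP^1\times\{0\}$. The condition $\mld(X,B^+)\ge \frac{1}{2}$ forces $1+m_1\le \frac{1}{2}$ and $1-m_1\le \frac{1}{2}$, i.e.\ $m_1\le-\frac{1}{2}$ and $m_1\ge\frac{1}{2}$, a contradiction. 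A valid complement does exist (e.g.\ $B^+ = \frac{1}{2}\sum_{j=1}^4 (p_j\times\bA^1)$ with $p_j\in\bP^1$ general, $n=2$), but it is necessarily non-invariant. Your patching step in part~(b) is therefore vacuous: there is nothing invariant to patch. There are further unjustified steps even in part~(a): the witness $m_0'$ you appeal to does not exist, because $B_0^+$ from Theorem~\ref{mth}.a) is itself constructed via general members of linear systems and is not invariant, and the assertion that the moment vector of an invariant complement must lie in $\Phi^*(M'_\R)$ (as opposed to $\pi^*(\bar{M}_\R)$) is not argued. The correct approach --- taking $A'$ to be the vertex set of $n\square'$, pulling back to a linear system on $X$, and taking $D$ general --- is what the paper does; the bound on $n$ then follows from Lemma~\ref{finser}, which gives finiteness of $(N',tU')$ up to isomorphism when the relevant ACC set accumulates only at $0$.
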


If $f$ is an isomorphism and $X$ is $\Q$-factorial, Theorem~\ref{mapp}.b) was proved by Shokurov~\cite{Sho04}, based on Borisov's result. A slightly more general (but equivalent modulo toric MMP) statement is proved in Section 4.
We do not address here the more general conjecture on the existence of bounded $a-n-$complements for boundaries with arbitrary coefficients (see ~\cite[Conjecture 5 on page 212]{Sho20}).

We outline the structure of this paper. In Section 2 we prove the main combinatorial result (Theorem~\ref{SerPoly}), a generalization of Nill-Ziegler's theorem. The proof is similar but more direct, based on the reduced projections of Kannan-Lovasz and a generalization of a result of 
Pikhurko (Theorem~\ref{GenPik}). In Section 3 we translate the combinatorial result in the setting of germs of toric fibrations which admit complements with good singularities. We also investigate basic properties of the image of a $t$-lc reduction, including the existence of hyperplane sections on the base with good singularities (Theorem~\ref{bndmult}). In Section 4 we assume the boundary has $r$-hyperstandard coefficients, and construct a complement with bounded index and good singularities. In Section 5 we specialize our main result to the case of germs of toric log singularities
$P\in (X,B)$. We also show that if $\dim X$ is fixed, if the coefficients of $B$ and $\mld_P(X,B)$ belong to a DCC set, then they are finite.


\section{Geometry of numbers}



\subsection{Preliminary}


Let $V\simeq \R^d$ be a finite dimensional $\R$-vector space. Let
$V^*=\Hom_\R(V,\R)$ be the dual $\R$-vector space, with induced duality pairing $\langle \cdot,\cdot\rangle\colon V^*\times V\to \R$.
Let $\square\subset V$ be a closed convex set which contains the origin. Its {\em polar set} is defined by 
$$
\square^*=\{v^*\in V^*;\langle v^*,v\rangle\ge -1 \ \forall v\in \square\}.
$$
It is again a closed convex set in $V^*$, which contains the origin. We have $\square=(\square^*)^*$. Indeed, this follows from the duality theorem for closed convex cones applied to the cone in $V\times \R$ generated by the set $\square\times 1$.

If $\square\subset V$ is a closed convex set and $\varphi\in V^*$, we denote $h_\square(\varphi)=\inf\langle \varphi,\square\rangle\in \R\cup\{-\infty\}$, where 
$\langle \varphi,\square\rangle$ is the image interval $\{\varphi(v);v\in \square\}$. If $\square$ is polyhedral, the infimum is either $-\infty$ or a minimum.

If $\square\subset V$ is a closed convex set, the difference $\square-\square=\{x-y;x,y\in \square\}\subset V$ is a closed convex set
symmetric about the origin, and $(\square-\square)^*$ consists of the linear forms $\varphi\in V^*$ such that the interval $\langle \varphi,\square\rangle\subset \R$ has length at most $1$.

Suppose $\square\subset V$ is a compact convex set of dimension $d$. For 
$P\in \square$, denote by $\gamma(P\in \square)$ the maximal $t\ge 0$
such that $P+t(\square-\square)\subseteq \square$. We have $0\le \gamma(P\in \square)\le \frac{1}{2}$, and the left hand side (resp. right hand side) inequality is attained only if $P\in \partial \square$ (resp. $\square$ is symmetric about $P$). We also have $\gamma(P\in \square)=\frac{\epsilon}{1+\epsilon}$, where $\epsilon$ is maximal with 
the property $-\epsilon(\square-P)\subseteq \square-P$.

If $\square$ is a simplex with vertices $v_0,\ldots,v_d$, denoted
$\square=[v_0,\ldots,v_d]$, any point $P\in \square$ admits a unique 
representation $P=\sum_{i=0}^dt_iv_i$ with $\min_it_i\ge 0$ and 
$\sum_{i=0}^dt_i=1$. Then $\gamma(P\in \square)=\min_{i=0}^dt_i$.
Moreover, we will denote $t_i$ by $\gamma_{v_i}(P\in [v_0,\ldots,v_d])$.

If $\square$ is a polytope, we will denote by $\square(0)$ the set of extremal points of $\square$, i.e. the finite set of its vertices.

\begin{lem}
	Let $S=[v_0,\ldots, v_d]$ be a $d$-dimensional simplex in $\R^d$.
	Let $P\in \Int S$ and suppose $P\ne 0$. Let $\lambda>1$ be maximal such that $\lambda P\in S$. Then $\lambda P$ belongs to the relative interior of a proper face $[v_i;i\in I]$ of $S$, and $(v_i)_{i\in I}$ are linearly independent.
\end{lem}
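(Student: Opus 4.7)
The plan is to locate $\lambda P$ on the boundary of $S$, identify the face $F$ whose relative interior contains it, and then obtain linear independence of the vertices of $F$ by a short geometric contradiction.

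Since $P\in \Int S$ and $P\ne 0$, the set $\{\mu\ge 0:\mu P\in S\}$ is a closed bounded interval containing an open neighborhood of $\mu=1$ (closed since $S$ is closed, bounded since $S$ is bounded and $P\ne 0$, open near $1$ since $P$ is interior). Its maximum is the required $\lambda$, and $\lambda>1$. Because $\mu P\notin S$ for $\mu>\lambda$, the point $\lambda P$ lies on $\partial S$, and hence in the relative interior of a unique proper face $F=[v_i;i\in I]$ with $I\subsetneq\{0,\ldots,d\}$.

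To establish linear independence of $(v_i)_{i\in I}$, I first translate it into a statement about the origin. Since $v_0,\ldots,v_d$ are the vertices of a $d$-simplex, the differences $(v_i-v_{i_0})_{i\in I\setminus\{i_0\}}$ are linearly independent for any fixed $i_0\in I$; consequently, a nontrivial linear relation $\sum_{i\in I}c_iv_i=0$ must satisfy $\sum_ic_i\ne 0$. Normalizing such a relation to $\sum_ic_i=1$ exhibits $0$ as an affine combination of $(v_i)_{i\in I}$, so linear dependence of $(v_i)_{i\in I}$ is equivalent to $0$ lying in the affine hull $H$ of $F$. It therefore suffices to rule out $0\in H$.

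Suppose for contradiction that $0\in H$. Since $\lambda P\in H$ as well, the entire line $\R\cdot P$ lies in $H$. But $\lambda P\in \relint F$, so some neighborhood of $\lambda P$ inside $H$ is contained in $F$; in particular $(\lambda+\epsilon)P\in F\subseteq S$ for all sufficiently small $\epsilon>0$, contradicting the maximality of $\lambda$. The only mildly subtle ingredient is the equivalence between linear dependence of $(v_i)_{i\in I}$ and $0\in H$; once it is granted, the conclusion follows from a one-line convexity argument.
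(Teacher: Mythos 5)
Your proof is correct, but it takes a genuinely different route from the paper's. The paper works entirely in barycentric coordinates: it writes $P=\sum t_iv_i$, introduces the unique affine relation $\sum\alpha_iv_i=0$, $\sum\alpha_i=1$, explicitly computes $\lambda P=\sum(\lambda t_i-(\lambda-1)\alpha_i)v_i$, identifies the face by which coefficients vanish, and derives a numerical contradiction from $\lambda_d=0$ and $\alpha_d=0$. You instead reduce linear dependence of $(v_i)_{i\in I}$ to the assertion $0\in H:=\operatorname{aff}(F)$ (using affine independence of the $v_i$ to see that any nontrivial linear relation among a subset has nonzero coefficient sum), and then observe that $0\in H$ together with $\lambda P\in H$ forces the whole line $\R P$ into $H$, so $(\lambda+\epsilon)P\in\relint F\subset S$ for small $\epsilon>0$, contradicting maximality of $\lambda$. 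Your argument is coordinate-free and shorter; it trades the paper's explicit description of $\lambda$ and $\lambda_i$ (which the paper partially reuses in its narrative, though not strictly needed for this lemma's statement) for a cleaner convexity contradiction. Both proofs are valid, and yours arguably makes the geometric content more transparent.
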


\begin{proof}
	Write $P=\sum_{i=0}^dt_iv_i$ with $t_i>0$ for all $i$ and $\sum_{i=0}^dt_i=1$. Since $v_1-v_0,\ldots,v_d-v_0$ form a basis of $\R^d$, there exists a unique solution for the system 
	$\sum_{i=0}^d\alpha_i v_i=0$, $\sum_{i=0}^d\alpha_i=1$.
	Moreover, any non-trivial relation among $v_i$ is a multiple of 
	$(\alpha_0,\ldots,\alpha_d)\in \R^{d+1}$.
	We have 
	$$
	tP=\sum_{i=0}^d(tt_i-(t-1)\alpha_i)v_i, \ \sum_{i=0}^d (tt_i-(t-1)\alpha_i)=1.
	$$
	Therefore $tP\in S$ if and only if $tt_i-(t-1)\alpha_i\ge 0$
	for all $i$. If $t_i\ge \alpha_i$ for all $i$, then $P=0$. Since we assumed otherwise, $t_i<\alpha_i$ for some $i$. Therefore the maximal $\lambda$ is well defined, and $\lambda-1=\min_{\alpha_i>t_i}\frac{t_i}{\alpha_i-t_i}$. We have $\lambda>1$, $\lambda P=\sum_{i=0}^d \lambda_iv_i$ and $\lambda_i=\lambda t_i-(\lambda-1)\alpha_i$.
	Say $\lambda_0,\ldots,\lambda_k>0=\lambda_{> k}$. Here $0\le k<d$.
	We have $\lambda P\in \relint [v_0,\ldots,v_k]$.
	Suppose by contradiction that $v_0,\ldots,v_k$ are linearly dependent.
	Then $\alpha_{>k}=0$. But $0=\lambda_d=\lambda t_d-(\lambda-1)\alpha_d$,
	hence $\alpha_d>0$. Contradiction!
\end{proof}

\begin{lem}\label{cecr}
	Let $\pi\colon \R^d \to \R^{d'}$ be a surjective linear homomorphism. Let $\square\subset \R^d$ be a closed convex set of 
	dimension $d$, containing $0$ in its interior. Then $\square':=\pi(\square)$ is a closed convex set in $\R^{d'}$ containing the origin in its interior. Let $V_0=\Ker \pi$ and $\square_0:=\square\cap V_0$. Then $\square_0\subset V_0$ is a closed convex set of dimension $d-d'$, containing $0$ in its interior. Denote $\gamma=\gamma(0\in \square)$, $\gamma'=\gamma(0\in \square')$ and $\gamma_0=\gamma(0\in \square_0)$. Then 
	$$
	\gamma_0\gamma'\le \gamma\le \min(\gamma_0,\gamma').
	$$
\end{lem}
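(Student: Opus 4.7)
The plan is to work with the equivalent description $\gamma(0\in\square)=\epsilon/(1+\epsilon)$, where $\epsilon$ is maximal with $-\epsilon\square\subseteq\square$; write $\epsilon_0,\epsilon'$ for the analogous quantities attached to $\square_0,\square'$. The preliminary assertions that $\square'$ is compact convex with $0$ in its interior (as the image under the linear surjection $\pi$, which is open, of a compact convex set containing $0$ in its interior) and that $\square_0\subset V_0$ is compact convex of dimension $d-d'$ with $0$ in its relative interior (intersect $V_0$ with a small open ball around $0$ contained in $\square$) are routine and I would dispatch them first.

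For the upper bound, apply $\pi$ to the defining inclusion $-\epsilon\square\subseteq\square$ to obtain $-\epsilon\square'\subseteq\square'$, so $\epsilon'\ge\epsilon$; and intersect with the linear subspace $V_0$ (which is closed under scalar multiplication) to obtain $-\epsilon\square_0\subseteq\square_0$, so $\epsilon_0\ge\epsilon$. Monotonicity of $t\mapsto t/(1+t)$ converts both inequalities into $\gamma\le\min(\gamma_0,\gamma')$.

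For the lower bound, I would carry out a two-step construction. Given an arbitrary $v\in\square$ with image $y=\pi(v)\in\square'$, choose a lift $u\in\square$ of $-\epsilon'y\in\square'$. The convex combination $p=\tfrac{\epsilon'}{1+\epsilon'}v+\tfrac{1}{1+\epsilon'}u$ lies in $\square$ and projects to $0$, so $p\in\square_0$; hence $-\epsilon_0 p\in\square_0\subseteq\square$. Now form a second convex combination of $-\epsilon_0 p$ and $u$, choosing the weight $\lambda=\tfrac{1+\epsilon'}{1+\epsilon_0+\epsilon'}$ on the first term precisely to kill the $u$-coefficient; the result equals $-\tfrac{\epsilon_0\epsilon'}{1+\epsilon_0+\epsilon'}\,v$, and therefore lies in $\square$. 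Since $v$ was arbitrary, this yields $\epsilon\ge \tfrac{\epsilon_0\epsilon'}{1+\epsilon_0+\epsilon'}$, and converting through $\gamma=\epsilon/(1+\epsilon)$ gives $\gamma\ge\gamma_0\gamma'$ — the identity $(1+\epsilon_0)(1+\epsilon')=1+\epsilon_0+\epsilon'+\epsilon_0\epsilon'$ makes the product match cleanly.

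The only nontrivial step is discovering the two-stage combination that produces a negative scalar multiple of $v$ in $\square$: one first pushes $v$ against its lift $u$ to land in the fiber $\square_0$ over $0$, uses the symmetry constant $\epsilon_0$ there, and then recombines with $u$ to eliminate the lifted direction. Once this is in hand, everything else is algebraic bookkeeping.
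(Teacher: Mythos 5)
Your proof is correct and follows essentially the same two-stage convex combination as the paper's: push $v$ against a lift of $-\epsilon'\pi(v)$ to land in the fiber $\square_0$, apply the central symmetry constant there, then recombine with the lift to cancel it and produce $-\frac{\epsilon_0\epsilon'}{1+\epsilon_0+\epsilon'}v\in\square$. The one small streamlining is that you use the global constants $\epsilon_0,\epsilon'$ (maximal with $-\epsilon_\bullet\square_\bullet\subseteq\square_\bullet$) throughout, which makes the argument uniform in $v$ and avoids the paper's separate handling of the degenerate cases $\pi(v)=0$ and $v_0=0$.
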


\begin{proof}
	The second inequality is easy. For the first inequality, choose $0\ne v\in \square$. Then $\pi(v)\in \square'$. 
	
	Suppose $\pi(v)=0$. Then $\R v\cap \square=\R v\cap \square_0$, hence $-\epsilon v\in \square$ for some 
	$\epsilon>0$ with $\frac{\epsilon}{1+\epsilon}\ge \gamma_0$.
	
	Suppose $\pi(v)\ne 0$. Let $\epsilon'>0$ be maximal such that $-\epsilon'\pi(v)\in \square'$. We have $\frac{\epsilon'}{1+\epsilon'}\ge \gamma'$. There exists $w\in \square$ such that $\pi(w)=-\epsilon'\pi(v)$. This means that 
	$$
	v_0:=\frac{1}{1+\epsilon'}w+\frac{\epsilon'}{1+\epsilon'}v\in \square_0
	$$
	If $v_0=0$, then $-\epsilon'v=w\in \square$. Suppose $v_0\ne 0$.
	Let $\epsilon_0>0$ be maximal such that $-\epsilon_0v_0\in \square_0$.
	We have $\frac{\epsilon_0}{1+\epsilon_0}\ge \gamma_0$. If we denote 
	$
	\epsilon:=\frac{\epsilon'\epsilon_0}{1+\epsilon'+\epsilon_0},
	$
	the following identity holds:
	$$
	-\epsilon v=\frac{\epsilon}{\epsilon'}w+(1-\frac{\epsilon}{\epsilon'})(-\epsilon_0v_0).
	$$
	Since $w,-\epsilon_0v_0\in \square$, we deduce $-\epsilon v\in \square$. We compute
	$$
	\frac{\epsilon}{1+\epsilon}=\frac{\epsilon'}{1+\epsilon'}\cdot \frac{\epsilon_0}{1+\epsilon_0}\ge \gamma'\cdot \gamma_0.
	$$
\end{proof}

\begin{rem}
	Both inequalities are attained. For the left hand side, let $T=[A,B,C]\subset \R^2$ be a triangle containing $0$ in its interior. Let $\pi\colon \R^2\to \R$ be the projection which maps $B$ and $C$ onto the same point $P$. 
	Let $P'\in \R$ be the image of $A$. Let $0=(1-\lambda)P'+\lambda P$.
	The intersection of $T$ with $\Ker \pi$ is an interval $[B',C']$.
	Let $0=(1-\mu)B'+\mu C'$. Then $0\in T$ has the following barycentric coordinates:
	$$
	0=(1-\lambda)A+\lambda(1-\mu)B+\lambda\mu C.
	$$
\end{rem}

Let $\Lambda\simeq \Z^d$ be a $d$-dimensional lattice. Let $\square\subset \Lambda_\R$ be a compact convex set of dimension $d$. For $1\le i\le d$,
denote by $\lambda_i(\Lambda,\square-\square)$ the $i$-th successive minimum of Minkowski, i.e. the smallest real number $t>0$ such that $\Lambda \cap t(\square-\square)$ contains $i$ linearly independent elements.

If $\Lambda\cap \Int\square\ne \emptyset$, we define the {\em Pikhurko constant} of $(\Lambda,\square)$ as follows:
$$
\gamma(\Lambda,\square):=\max_{P\in \Lambda\cap \Int \square}\gamma(P\in \square).
$$

\begin{defn}
	We say that $(\Lambda,\square)$ is {\em bounded} if one of the following equivalent conditions holds:
	\begin{itemize}
		\item[a)] $\lambda_1(\Lambda,\square-\square)$ is bounded away from zero.
		\item[b)] $\lambda_d(\Lambda^*,(\square-\square)^*)$ is bounded above.
		\item[c)] There exists an isomorphism $\Phi\colon \Lambda\isoto \Z^d$
		such that $\Phi_\R(\square)\subseteq \prod_{i=1}^d[x_i,x_i+l]$, where 
		$x_i\in \R$, and $l$ is a positive number bounded above.
	\end{itemize}
\end{defn}

Properties a) and b) are equivalent due to the inequality
$$
1\le \lambda_1(\Lambda,\square-\square)\cdot \lambda_d(\Lambda^*,(\square-\square)^*) \le d!,
$$ 
which is one of the inequalities of Mahler's Transference Theorem~\cite[page 219, thm VI]{Cas97}. If c) holds, let $\varphi_1,\ldots,\varphi_d$ be the components of $\Phi$. Then 
they form a basis of $\Lambda^*$ and $\length \varphi_i(\square)\le l$ for all $i$. Therefore $\lambda_d(\Lambda^*,(\square-\square)^*)\le l$.
Suppose now b) holds, i.e. $\lambda_d(\Lambda^*,(\square-\square)^*)\le c$.
This means that there exist $\varphi_1,\ldots,\varphi_d\in \Lambda^*$, 
linearly independent, such that $\length \varphi_i(\square)\le c$ for all $i$. By an argument of Mahler~\cite[page 68]{GL87}, $\Lambda^*$ admits a basis $\varphi'_1,\ldots,\varphi'_d$ such that $\length \varphi'_i(\square)\le dc$ for all $i$. Then $(\varphi'_1,\ldots,\varphi'_d)\colon \Lambda\isoto \Z^d$ is an isomorphism
as in c) and the image of $\square$ is contained in a product of intervals of length at most $dc$.

Note that in case c) we have $\sqrt[d]{\vol_\Lambda\square}\le l$. By the above argument, $\sqrt[d]{\vol_\Lambda\square}\le d\cdot \lambda_d(\Lambda^*,(\square-\square)^*)$.

Note that if $\square$ contains $0$, then c) implies 
$\Phi_\R(\square)\subseteq [-l,l]^d$. If moreover $\square$ is a polytope whose vertices are rational points of bounded index, then $\Phi_\R(\square)$ belongs to a finite set of polytopes.


\subsection{Central integer points of intervals}


Let $I\subset \R$ be a compact interval such that $\Z\cap \Int I\ne \emptyset$. We compute and estimate the Pikhurko constant $\gamma:=\gamma(\Z,I)$.

a) Case of odd number of integer interior points: $\Z\cap \Int I=\{z,z+1,\ldots,z+2k\}$ for some integer $k\ge 0$. Then $I=[z-\alpha,z+2k+\beta]$ for some 
$\alpha,\beta\in (0,1]$. After possibly replacing $I$ by $-I$, we may suppose $\alpha\ge \beta$. Then 
$\gamma$ is attained by $P=z+k$, equal to 
$$
\gamma=\frac{k+\beta}{2k+\alpha+\beta}.
$$
Keeping the set of interior integer points fixed, this invariant is minimal when $\alpha=1$, hence 
$$
\gamma\ge \frac{k+\beta}{2k+1+\beta}.
$$

b) Case of even number of integer interior points: $\Z\cap \Int I=\{z,z+1,\ldots,z+2k+1\}$ for some integer $k\ge 0$. Then $I=[z-\alpha,z+2k+1+\beta]$ for some 
$\alpha,\beta\in (0,1]$. After possibly replacing $I$ by $-I$, we may suppose $\alpha\ge \beta$. Then 
$\gamma$ is attained by $P=z+k$, equal to 
$$
\gamma=\frac{k+\alpha}{2k+1+\alpha+\beta}.
$$
Keeping the set of interior integer points fixed, this invariant is minimal when $\alpha=\beta$, hence 
$$
\gamma\ge \frac{k+\beta}{2k+1+2\beta}.
$$

We deduce that $\gamma>\frac{k}{2k+1}$. Therefore $\gamma$ approaches $\frac{1}{2}$ from below as the length of $I$ becomes arbitrarily large.

\begin{lem}\label{lig} 
	$(1-2\gamma)\cdot \length I\le 1$. 
\end{lem}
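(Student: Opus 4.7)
The plan is to piggyback on the two-case analysis performed immediately before the lemma, since the Pikhurko constant $\gamma(\Z,I)$ has just been computed explicitly in each parity case for $|\Z\cap \Int I|$. Since both $\gamma(\Z,I)$ and $\length I$ are invariant under $I\mapsto -I$, I may assume $\alpha\ge \beta$ as in the preceding discussion. Multiplying $1-2\gamma$ by $\length I$ in each case, the denominator of $\gamma$ (which equals $\length I$) cancels and what remains is a linear expression in $\alpha,\beta$ that is easily bounded by $1$.

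Concretely, in the odd case $I=[z-\alpha,z+2k+\beta]$ with $\gamma=\frac{k+\beta}{2k+\alpha+\beta}$, a one-line computation gives $1-2\gamma=\frac{\alpha-\beta}{2k+\alpha+\beta}$, hence
\[
(1-2\gamma)\cdot \length I \;=\; \alpha-\beta \;\le\; \alpha \;\le\; 1.
\]
In the even case $I=[z-\alpha,z+2k+1+\beta]$ with $\gamma=\frac{k+\alpha}{2k+1+\alpha+\beta}$, the analogous computation gives $1-2\gamma=\frac{1-\alpha+\beta}{2k+1+\alpha+\beta}$, hence
\[
(1-2\gamma)\cdot \length I \;=\; 1-\alpha+\beta \;\le\; 1,
\]
using $\alpha\ge \beta$.

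There is no real obstacle here; the lemma is essentially a repackaging of the explicit formulas just obtained, and the main task is simply the bookkeeping across the two parity cases. Conceptually, writing $I=[a,b]$ and $\gamma(P\in I)=\min(P-a,b-P)/(b-a)$ for $P\in \Int I$, one sees that $(1-2\gamma(\Z,I))\cdot \length I$ equals twice the distance from the midpoint $(a+b)/2$ to the closest integer of $\Z\cap \Int I$; the bound $\le 1$ reflects the fact that the midpoint of $I$ lies inside the block of interior integer points and is therefore within distance $\frac{1}{2}$ of one of them.
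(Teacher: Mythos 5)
Your proof is correct and is essentially the same as the paper's: you substitute the explicit formulas for $\gamma$ in the two parity cases, multiply out, and observe that $\alpha-\beta<1$ (odd case) and $1-\alpha+\beta\le 1$ (even case, using $\alpha\ge\beta$). The concluding remark interpreting $(1-2\gamma)\cdot\length I$ as twice the distance from the midpoint to its nearest interior integer is a nice conceptual gloss but not part of the paper's argument.
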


\begin{proof}
	In case a), $(1-2\gamma)(2k+\alpha+\beta)=\alpha-\beta<1$.
	In case b), $(1-2\gamma)(2k+1+\alpha+\beta)=
	1-\alpha+\beta \le 1$.
\end{proof}

The inequality in Lemma~\ref{lig} can be restated as 
$\lambda_1(\Z,I-I)+2\gamma(\Z,I)\ge 1$, since $\lambda_1(\Z,I-I)=(\length I)^{-1}$ in this case.

\begin{lem}[Pikhurko] Suppose $I(0)\subset \frac{1}{l}\Z$. Then $\gamma\ge \frac{1}{l+2}$, attained up to lattice translation only by the interval $I=[-\frac{1}{l},1+\frac{1}{l}]$ and its two interior integer points $\{0,1\}$.
\end{lem}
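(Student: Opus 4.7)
The plan is to exploit the explicit formulas for $\gamma$ computed immediately before the lemma statement, together with the observation that the condition $I(0)\subset \frac{1}{l}\Z$ forces the boundary parameters $\alpha,\beta$ to lie in $\frac{1}{l}\Z\cap (0,1]$. Then the problem reduces to minimizing two rational functions in the integer variable $k\ge 0$ and the discrete variables $\alpha,\beta$.

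First, I would treat case (a): $I=[z-\alpha,z+2k+\beta]$ with $\alpha\ge \beta$, and $\gamma=\frac{k+\beta}{2k+\alpha+\beta}$. A direct partial-derivative check (or equivalently, writing $\gamma=\frac{1}{2}-\frac{\alpha-\beta}{2(2k+\alpha+\beta)}$) shows that $\gamma$ is nonincreasing in $\alpha$ and nondecreasing in $\beta$. Hence on the feasible set the minimum is taken at $\alpha=1$, $\beta=\frac{1}{l}$. Plugging in gives $\gamma=\frac{kl+1}{(2k+1)l+1}$, which is increasing in $k$, so the infimum over case (a) is $\frac{1}{l+1}$, realized only at $k=0$.

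Next I would treat case (b): $I=[z-\alpha,z+2k+1+\beta]$ with $\alpha\ge \beta$, and $\gamma=\frac{k+\alpha}{2k+1+\alpha+\beta}$. Here $\gamma$ is increasing in $\alpha$ and decreasing in $\beta$; because of the constraint $\alpha\ge \beta$, the minimum on a fixed level $\{k\text{ fixed}\}$ is attained on the diagonal $\alpha=\beta$, where $\gamma=\frac{k+\alpha}{2k+1+2\alpha}$ is strictly increasing in $\alpha$ (the partial derivative has constant sign). So take the smallest admissible value $\alpha=\beta=\frac{1}{l}$, obtaining $\gamma=\frac{kl+1}{(2k+1)l+2}$, again increasing in $k$; hence the infimum over case (b) is $\frac{1}{l+2}$, attained only at $k=0$, $\alpha=\beta=\frac{1}{l}$.

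Finally I would compare the two cases: $\frac{1}{l+2}<\frac{1}{l+1}$, so the global minimum is $\frac{1}{l+2}$, attained uniquely (up to lattice translation $z\in \Z$ and the $I\mapsto -I$ reduction used to normalize $\alpha\ge \beta$) by $I=[-\frac{1}{l},1+\frac{1}{l}]$ with interior integer points $\{0,1\}$. This gives both the bound and the classification of equality. There is no serious obstacle here; the only mild care needed is to keep track of the uniqueness claim, checking that the two monotonicity arguments in case (b) are strict, so that equality forces $k=0$, $\alpha=\beta=\frac{1}{l}$ and excludes case (a) altogether.
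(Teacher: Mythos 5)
Your proposal is correct and follows the same route as the paper's proof: restrict to $\alpha=1$ in case (a) and $\alpha=\beta$ in case (b) using the formulas for $\gamma$ derived just above the lemma, plug in $\beta=\frac{1}{l}$, and take $k=0$. You merely fill in the monotonicity checks and the uniqueness discussion that the paper leaves implicit, so there is no substantive difference in approach.
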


\begin{proof}
	Case a): May suppose $\alpha=1$. Then $\gamma\ge \frac{k+\beta}{2k+1+\beta}\ge \frac{kl+1}{(2k+1)l+1}$.
	The smallest value is taken for $k=0$, equal to $\frac{1}{l+1}$.
	Case b): May suppose $\alpha=\beta$. Then $\gamma\ge \frac{k+\beta}{2k+1+2\beta}\ge \frac{kl+1}{(2k+1)l+2}$.
	The smallest value is taken for $k=0$, equal to $\frac{1}{l+2}$.
\end{proof}

\begin{lem}\label{gP1}
	Let $\cA\subset (0,+\infty)$ be an ACC set. Suppose $0\in I$ and 
	$I(0)\subset \{0\}\cup\{\pm \frac{1}{a};a\in \cA \}$. Then $\gamma$ is bounded away from zero.
\end{lem}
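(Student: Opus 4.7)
My plan is to reduce to a short case analysis using the explicit formulas for $\gamma$ derived immediately above the statement, controlling the small-$\gamma$ regimes via the ACC hypothesis. The estimate $\gamma > \frac{k}{2k+1}$ established there in both cases a) and b) gives $\gamma \ge \frac{1}{3}$ as soon as $|\Int I \cap \Z| \ge 3$, so it suffices to treat intervals with one or two interior integer points. From the ACC hypothesis I would extract two quantitative ingredients: $A := \max \cA$ is finite, so $1/a \ge 1/A$ for every $a \in \cA$; and if $\cA \cap (0,1)$ is nonempty then $A^- := \max(\cA \cap (0,1))$ exists and is strictly less than $1$, since otherwise a strictly ascending sequence in $\cA \cap (0,1)$ converging to $1$ would yield an infinite ascending chain in $\cA$.

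Next I would write $I = [p,q]$ with $p \le 0 \le q$; by hypothesis the endpoints lie in $\{0\}\cup\{\pm 1/a : a \in \cA\}$. I split on whether $0 \in \Int I$ or $0$ is an endpoint. In the first case both $p=-1/a$ and $q=1/a'$ are nonzero with $a, a' \in \cA$, giving $|p|,\, q \ge 1/A$. In the second case, say $p=0$, the hypothesis $|\Int I \cap \Z| \ge 1$ forces $q > 1$, hence $q = 1/a'$ with $a' \in \cA \cap (0,1)$ and thus $q \ge 1/A^-$. Plugging these into the formula $\gamma(m \in I) = \min(m-p,\, q-m)/(q-p)$ at each interior integer $m$ gives, in each subcase, a lower bound on $\gamma$ of the form $1/(2A)$, $1-A^-$, $1/(2(A+1))$, or $1/3$.

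The mildly delicate step is the two-interior-integer case with $0 \in \Int I$: one endpoint is controlled by an element of $\cA \cap [1,\infty)$ (hence in absolute value by $1/A$ from below) and the other by an element of $\cA \cap (0,1)$ (hence by $1/A^-$ from below), and the small factor $a'(1+\cdot)^{-1}$ at one vertex is absorbed by selecting the larger of $\gamma(m \in I)$ and $\gamma(m+1 \in I)$ at the two interior integers, yielding a bound of the form $(1-A^-)/(A+1)$. Combining all subcases gives $\gamma \ge \min\!\bigl(1/3,\, 1/(2A),\, (1-A^-)/(A+1)\bigr) > 0$, a positive constant depending only on $\cA$.
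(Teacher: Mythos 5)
Your proof is correct and follows essentially the same route as the paper's: reduce to at most two interior integer points via the estimate $\gamma > \frac{k}{2k+1}$, case on whether $0$ is an endpoint or an interior point of $I$, and bound $\gamma$ from below in each residual case using the ACC hypothesis (which supplies $\max\cA<\infty$ and, when $\cA\cap(0,1)\neq\emptyset$, $\max(\cA\cap(0,1))<1$). The paper works with the $\alpha,\beta$ parametrization from the preceding computations while you write $I=[p,q]$ directly and extract explicit constants $A$, $A^-$, but the case decomposition and the role of ACC are identical.
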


\begin{proof}
	If $\Z\cap \Int I$ contains at least $3$ points, then $\gamma>\frac{1}{3}$.
	So we may suppose $\Z\cap \Int I$ has at most two elements.
	
	1) Case $0$ is an extremal point of $I$. We may suppose $I=[0,\frac{1}{a}]$ for some $a\in \cA$.
	
	Case a): $\Z\cap \Int I=\{1,\ldots,2k+1\}$ and $2k+1<\frac{1}{a}\le 2k+2$.
	Here $\gamma$ is attained by $P=k+1$, equal to $\gamma=1-a(k+1)$.
	If $k=0$, we obtain $\gamma=1-a$. If $k\ge 1$, $\gamma>\frac{1}{3}$.
	
	Case b): $\Z\cap \Int I=\{1,\ldots,2k+2\}$ and $2k+2<\frac{1}{a}\le 2k+3$.
	Here $\gamma$ is attained by $P=k+1$, equal to $\gamma=a(k+1)$.
	We have $\gamma\ge \frac{k+1}{2k+3}\ge \frac{1}{3}$.
	
	We conclude in this case 
	$$
	\gamma\ge \min(1-a,\frac{1}{3}).
	$$
	
	2) Case $0$ is an interior point of $I$. 
	
	Case a): $\Z\cap \Int I=\{z\}$. Then $z=0$ and $\gamma\ge \frac{\beta}{1+\beta}$. Have $\beta=\frac{1}{a}$ for some $a\in \cA$. Then 
	$\gamma\ge \frac{1}{1+a}$. 
	
	Case b): $\Z\cap \Int I=\{z,z+1\}$. Here may suppose $\alpha=\beta$ and $\gamma=\frac{\beta}{1+2\beta}$. There are two possibilities:
	
	Suppose $z=0$. Then $1+\beta=\frac{1}{a}$ for some $a\in \cA\cap [\frac{1}{2},1)$. Then $\gamma=\frac{1-a}{2-a}$.
	
	Suppose $z+1=0$. Then $\beta=\frac{1}{a}$ for some $a\in \cA\cap [1,+\infty)$. Then $\gamma=\frac{1}{2+a}$.
	
	We conclude in case 2) that
	$$
	\gamma\ge \min(\frac{1-a}{2-a},\frac{1}{2+a}).
	$$
	Alltogether, we obtain 
	$$
	\gamma\ge \min(\frac{1}{3}, \frac{1-a}{2-a},\frac{1}{2+a}).
	$$
\end{proof}


\subsection{A generalization of Pikhurko's Theorem~\cite{Pikh01}}


\begin{thm}\label{GenPik} Fix $d\in \Z_{\ge 1}$ and an ACC set $\cA\subset (0,+\infty)$. 
	Let $\Lambda\simeq \Z^d$ be a lattice of dimension $d$, let $\square\subset \Lambda_\R$ be a compact polytope of dimension $d$. Suppose $0\in \square$ and every non-zero extremal point of $\square$ is of the form $v=\frac{e}{a_e}$ where $e\in \Lambda^{prim}$ and $a_e\in \cA$.
	If $\Lambda\cap \Int\square\ne \emptyset$,
	then $\gamma(\Lambda,\square)$ is bounded away from zero.
\end{thm}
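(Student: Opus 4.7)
The plan is to argue by induction on $d$. The base case $d=1$ is exactly Lemma~\ref{gP1}: in this case the extremal points of $\square$ are $0$ and/or $\pm 1/a$ with $a\in\cA$, so its hypothesis is met.

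For the inductive step, pick $P\in\Lambda\cap\Int\square$ and first suppose $P\ne 0$. Let $u\in\Lambda^{prim}$ with $P\in\Z_{\ge 1}u$, and let $\pi\colon\Lambda\twoheadrightarrow\Lambda':=\Lambda/\Z u$ be the induced surjection of rank $d-1$. Then $\pi(P)=0\in\Int\pi(\square)$, since $\pi_\R$ is open and $P\in\Int\square$. Each non-zero vertex of $\pi(\square)$ takes the form $\pi(e_i)/a_i=e'_i/(a_i/g_i)$, where $\pi(e_i)=g_ie'_i$ with $e'_i\in(\Lambda')^{prim}$ and $g_i\in\Z_{\ge 1}$. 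The derived set $\cA':=\{a/g:a\in\cA,\ g\in\Z_{\ge 1}\}$ is again ACC: a strictly ascending chain $a_n/g_n$ is bounded by $\max\cA$, which forces $g_n$ bounded and hence a constant subsequence $g_n=g$, yielding an ascending chain $a_n$ in $\cA$, a contradiction. Applying the inductive hypothesis to $(\Lambda',\pi(\square))$ with ACC set $\cA'$ produces $Q'\in\Lambda'\cap\Int\pi(\square)$ with $\gamma(Q'\in\pi(\square))$ bounded below. The next step is to lift $Q'$ to a lattice point $\tilde Q$ in the one-dimensional fibre $I:=\pi^{-1}(Q')\cap\square$, chosen deep in $I$, and to invoke Lemma~\ref{cecr} on the translate $\square-\tilde Q$ to get
$$\gamma(\tilde Q\in\square)\ge\gamma(\tilde Q\in I)\cdot\gamma(Q'\in\pi(\square)),$$
which is the desired bound.

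The main obstacle will be controlling the slice $I$: its endpoints lie on facets of $\square$ and need not be of the form $\pm 1/a$, $a\in\cA$, so Lemma~\ref{gP1} does not apply verbatim. What should save us is that the facet equations of $\square$ have coefficients controlled by the vertex denominators in $\cA$, so the endpoints of $I-\tilde Q$ carry denominators in a larger but still ACC set, and then the case analysis of Lemma~\ref{gP1} yields a bound. Existence of $\tilde Q\in I\cap\Lambda$ with bounded depth will come from the concavity of the fibre-length function (Brunn-Minkowski) together with the fact that $\pi^{-1}(\pi(P))$ contains the lattice point $P$, forcing some fibre of $\pi$ to have length $\ge 1$ and, via the depth of $Q'$ in $\pi(\square)$, propagating this to $I$. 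The remaining case, in which $0$ is the unique interior lattice point, is handled separately: writing $0=\sum_i r_i(e_i/a_i)$ as a barycentric combination and reducing via Carath\'eodory to a simplex of at most $d+1$ vertices containing $0$ in its relative interior, one bounds $\min_i r_i$ below by direct case analysis on the $(e_i,a_i)$ data, in the spirit of Lemma~\ref{gP1}.
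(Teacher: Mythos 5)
Your overall strategy -- induction on $d$ via a rank-one projection, then lifting a deep interior lattice point of $\square'$ to a fiber and invoking Lemma~\ref{cecr} -- is the same as the paper's, and your computation of the derived ACC set $\cA'$ is correct. But the choice of projection direction has a genuine gap. You project along a primitive $u$ with $P\in\Z_{\ge 1}u$, so the fiber over $\pi(P)=0$ contains $[0,P]$ and has length $\ge 1$. By the depth of $Q'$ this gives $\length(\square_{Q'})\ge\gamma'\cdot L$, where $L$ is the maximal fiber length; but all you know is $L\ge 1$, and since $\gamma'\le\frac{1}{2}$ always, the bound $\length(\square_{Q'})\ge\gamma'$ falls short of $1$ and does not force a lattice point (let alone a well-centered one) into the fiber $I=\pi^{-1}(Q')\cap\square$. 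The paper avoids this by projecting along a shortest lattice vector $b\in\Lambda\cap\tau(\square-\square)$ with $\tau=\lambda_1(\Lambda,\square-\square)$, so that the longest fiber has length exactly $\tau^{-1}$ and $\length(\square_{Q'})\ge\gamma'/\tau$. It then bifurcates: if $\tau\le\gamma'/2$ the fiber has length $>1$ and Lemma~\ref{lig} produces $\tilde Q$ with $\gamma(\tilde Q\in I)$ bounded below; if $\tau\ge\gamma'/2$ then $\tau$ itself is bounded away from zero, so $(\Lambda,\square)$ is bounded and a separate finite/combinatorial argument (Lemma~\ref{BndPik}) finishes. Your dichotomy ``$P\ne 0$'' versus ``$0$ is the unique interior lattice point'' is not the right one and leaves the boundedness alternative unaddressed.

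A second, smaller point: your worry that the endpoints of the slice $I$ are not of the form $\pm 1/a$ with $a\in\cA$ is a red herring. You do not need an analogue of Lemma~\ref{gP1} for the fiber; Lemma~\ref{lig} applies to an arbitrary interval with interior lattice points and gives $2\gamma(\Z,I)\ge 1-(\length I)^{-1}$, which is all that is needed once $\length I>1$. The genuine obstacle is making $\length I$ exceed $1$, and that is precisely what fails with your projection direction. Finally, your plan for the case where $0$ is the only interior lattice point -- Carath\'eodory plus a case analysis on barycentric coordinates -- is the right idea and is essentially Case~2 of Lemma~\ref{BndPik}, but in the paper this sits inside the broader bounded case, where boundedness is used to force the relevant $a_e$'s into a finite set so that the ACC hypothesis can be exploited; without establishing boundedness first, that case analysis does not close.
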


\begin{lem}\label{BndPik} 
	In the setting of Theorem~\ref{GenPik}, suppose moreover that $(\Lambda,\square)$ is bounded.
	Then $\min_{P\in \Lambda\cap \Int\square}\gamma(P\in \square)$ is bounded away from zero.
\end{lem}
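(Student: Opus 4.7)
The plan is to argue by contradiction via a limit argument. Suppose the conclusion fails, so we obtain a sequence $(\Lambda_n, \square_n, P_n)$ satisfying the hypotheses of Theorem~\ref{GenPik} together with boundedness, such that $P_n \in \Lambda_n \cap \Int \square_n$ but $\gamma(P_n \in \square_n) \to 0$. Condition (c) of the boundedness definition lets me assume, after composing with the isomorphisms $\Lambda_n \isoto \Z^d$, that $\Lambda_n = \Z^d$ and $\square_n \subseteq [-L,L]^d$ for a uniform $L$. Since ACC forbids infinite strictly ascending sequences, $\cA$ is bounded above by some $\bar A$; then each vertex $e/a_e$ of $\square_n$ satisfies $\|e\|_\infty \leq L\bar A$, so the primitive vectors appearing as vertex directions lie in a finite set $\cE$. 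A first extraction makes $P_n = P$ constant and the set of vertex directions of $\square_n$ equal to a fixed subset $\cE_0 \subseteq \cE$.

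The crucial second extraction uses that every nonempty subset of an ACC set in $\R$ has a maximum, so any sequence in $\cA$ admits a non-increasing subsequence (iteratively pick the max of the remaining tail). Applying this coordinate-by-coordinate to $a_e^{(n)}$ for each $e \in \cE_0$, and using the lower bound $a_e^{(n)} \geq \|e\|_\infty/L$ coming from boundedness, I extract so that $a_e^{(n)} \searrow a_e^\infty \geq \|e\|_\infty/L > 0$ for every $e \in \cE_0$. Then $e/a_e^{(n)}$ moves outward along the ray through $e$, so $\square_n = \Conv(\{0\} \cup \{e/a_e^{(n)} : e \in \cE_0\})$ is set-theoretically non-decreasing in $n$ and converges in Hausdorff distance to $\square_\infty := \Conv(\{0\} \cup \{e/a_e^\infty : e \in \cE_0\})$.

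Finally, $P \in \Int \square_1$ has some positive margin $r$, and monotonicity gives $\square_1 \subseteq \square_n \subseteq \square_\infty$, so the ball $B(P,r)$ sits inside $\square_\infty$, whence $P \in \Int \square_\infty$ and $\gamma(P \in \square_\infty) > 0$. By continuity of $\gamma(P \in \cdot)$ in the Hausdorff distance on the space of convex bodies containing $P$ in their interior, $\gamma(P \in \square_n) \to \gamma(P \in \square_\infty) > 0$, contradicting $\gamma(P_n \in \square_n) \to 0$. The main conceptual obstacle is identifying that the ACC hypothesis is exactly what enables the non-increasing extraction of the coefficients $a_e^{(n)}$; the resulting monotonicity of $\square_n$ is then what prevents $P$ from drifting to the boundary of the limit and makes the continuity-of-$\gamma$ step close the argument.
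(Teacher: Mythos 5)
Your argument is correct, but it takes a genuinely different route from the paper's. The paper proves Lemma~\ref{BndPik} by a direct, pointwise case analysis: for a fixed $P\in\Lambda\cap\Int\square$ it writes $\gamma=\gamma_{v_0}(P\in[v_0,\ldots,v_p])$ for an explicit simplex of vertices of $\square$ containing $P$ in its relative interior, and then runs through five cases (depending on the linear (in)dependence of $v_0,\ldots,v_p$ and whether the unique affine relation among them involves $0$ or not) to express $\gamma$ or a lower bound for it as an explicit function of the $a_i$'s and finitely many bounded integer/rational parameters; boundedness and ACC then show each such expression is bounded away from zero. You instead run a compactness argument by contradiction: after normalizing $\Lambda_n=\Z^d$ and $\square_n\subseteq[-L,L]^d$ via condition (c), you use finiteness of candidate interior lattice points and vertex directions, and — this is the key step — the ACC hypothesis to extract a simultaneous \emph{non-increasing} subsequence of the coefficients $a_e^{(n)}$, so that the polytopes $\square_n$ are nested and increase to a limit $\square_\infty$; monotonicity keeps the fixed $P$ in the interior of $\square_\infty$, and lower semicontinuity of $\gamma(P\in\cdot)$ under Hausdorff convergence yields the contradiction. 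Your approach is shorter and isolates exactly why ACC is needed (it forbids $a_e^{(n)}$ increasing, hence $\square_n$ shrinking, hence $P$ drifting to $\partial\square_\infty$), at the cost of being non-constructive and leaning on a convexity fact (continuity of the asymmetry coefficient in the Hausdorff metric) that you assert without proof — that step is correct and standard, but it would be worth spelling out, e.g. by the explicit margin/separating-hyperplane argument, since it is the only place where the final contradiction is actually delivered. The paper's case analysis, though longer, keeps the proof self-contained within the elementary barycentric manipulations already set up for Theorem~\ref{GenPik}.
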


\begin{proof} Fix $P\in \Lambda\cap \Int\square$ and denote $\gamma=\gamma(P\in \square)$. There exists $v_0\in \square(0)$ such that $P=\gamma v_0+(1-\gamma)v'_0$
	and $v'_0\in \partial \square$. Thus $v'_0$ belongs to a face $F$ of $\square$ of dimension at most $d-1$. By Caratheodory's theorem, there exist $v_1,\ldots,v_p\in F(0)\subset \square(0)$, for some $1\le p\le d$, forming the vertices of a $(p-1)$-dimensional simplex which contains $v'_0$ in its relative interior. Then $v_0,v_1,\ldots,v_p$ are the vertices of a $p$-dimensional simplex containing $P$ in its relative interior, and
	$\gamma=\gamma_{v_0}(P\in [v_0,\ldots,v_p])$. Thus we may write 
	$P=\sum_{i=0}^p t_iv_i$ with $\min_it_i>0$, $\sum_i t_i=1$ and $t_0=\gamma$.
	
	Suppose $v_0,v_1,\ldots,v_p$ are linearly independent. That is,
	$0,v_0,\ldots,v_p$ are the vertices of a simplex of dimension $p+1$,
	$P$ lies in the relative interior of the face opposite the origin, with
	$P=\sum_{i=0}^p t_iv_i$ with $\min_it_i>0$, $\sum_i t_i=1$ and $t_0=\gamma$. {\bf (Case 0)}. 
	
	Suppose from now $v_0,v_1,\ldots,v_p$ are linearly dependent.
	As they are the vertices of a $p$-dimensional simplex, there exists a unique relation $0=\sum_{i=0}^p\alpha_iv_i$ with $1=\sum_{i=0}^p\alpha_i$. Moreover, any non-trivial relation among the $v_0,v_1,\ldots,v_p$ is a non-zero multiple of $(\alpha_0,\ldots,\alpha_p)$.
	
	We claim that $\alpha_0\ge 0$. Indeed, there exists $e\in \Lambda^*_\R\setminus 0$ such that $\min\langle \square,e\rangle=\langle v'_0,e\rangle=:c$. Since $0\in \square$, we have $c\le 0$. Since $v'_0$ is contained in the relative interior of the simplex $[v_1,\ldots,v_p]\subset \square$, we obtain $\langle v_i,e\rangle=c$ for all $1\le i\le p$. Since $P\in \Int \square$, we have $\langle P,e\rangle>c$. That is, $\langle v_0,e\rangle>c$. We obtain
	$$
	\alpha_0=\frac{-c}{\langle v_0,e\rangle-c}\ge 0.
	$$
	
	\underline{Suppose $\alpha_0=0$}. Then $0=\sum_{i=1}^p\alpha_iv_i$,
	$1=\sum_{i=1}^p\alpha_i$, and $v'_0$ lies in the relative interior of the simplex with vertices $v_1,\ldots,v_p$.
	
	If $v'_0=0$, then $P=\gamma v_0+(1-\gamma)\cdot 0$ {\bf (Case 1a)}.
	
	Suppose $v'_0\ne 0$. Then there exists $\lambda>1$ maximal with the property that $\lambda v'_0\in [v_1,\ldots,v_p]$. After reordering,
	$\lambda v'_0$ belongs to a simplex face $[v_1,\ldots,v_q]$, and 
	$v_1,\ldots,v_q$ are linearly independent. Then $v_0,v_1,\ldots,v_q,0$ are the vertices of a simplex containing $P$ in its interior, and $\gamma=\gamma_{v_0}(P\in [v_0,v_1,\ldots,v_q,0])$.
	{\bf (Case 1b)}
	
	\underline{Suppose $\alpha_0>0$}.
	
	Suppose $P=0$. Then $P=0$ lies in the relative interior of the simplex with vertices $v_0,\ldots,v_p$, with barycentric coordinates $\alpha_0,\ldots,\alpha_p$. Thus $\gamma=\gamma_{v_0}(P\in [v_0,\ldots,v_p])$. {\bf (Case 2)} 
	
	Suppose from now that $P\ne 0$. Let $\lambda>1$ be maximal such that $\lambda P\in [v_0,v_1,\ldots,v_p]$. Then
	$$
	\lambda P=\sum_{i=0}^p\lambda_iv_i, \lambda_i=\lambda t_i-(\lambda-1)\alpha_i.
	$$ 
	Let $I=\{i\in \{0,\ldots,p\}; \lambda_i>0\}$. Then $I\subsetneq \{0,1,\ldots,p\}$ and $(\lambda_i)_{i\in I}$ are linearly independent.
	We obtain $\lambda P=\sum_{i\in I}\lambda_i v_i$ with $\lambda_i>0$ for all $i\in I$ and $\sum_{i\in I}\lambda_i=1$. 
	
	Subcase $0\in I$. Say $I=\{0,1,\ldots,k\}$, for some $0\le k<p$.
	Then $\lambda P=\sum_{i=0}^k\lambda_i v_i$ with all $\lambda_i>0$ and 
	$\sum_{i=0}^kv_i=1$. Then $P=(1-\frac{1}{\lambda})\cdot 0+\sum_{i=0}^k\frac{\lambda_i}{\lambda}v_i$.
	So $P$ lies in the relative interior of the simplex with vertices
	$0,v_1,\ldots,v_k$. {\bf (Case 3)}.
	From $\gamma=t_0$ and $\lambda_0=\lambda t_0-(\lambda-1)\alpha_0$, we obtain 
	$$
	\gamma =t_0=\frac{\lambda_0+(\lambda-1)\alpha_0}{\lambda}>\frac{\lambda_0}{\lambda}=\gamma_{v_0}(P\in [0,v_1,\ldots,v_k]).
	$$
	
	Subcase $0\notin I$. Then $\lambda P\in \relint [v_1,\ldots,v_k]$
	for some $1\le k\le p$. {\bf (Case 4)}
	Here $0,v_1,\ldots,v_k$ are the vertices of a $p$-dimensional simplex containing $P$ in its interior, with barycentric coordinates
	$$
	P=(1-\frac{1}{\lambda})\cdot 0+\sum_{i=1}^k\frac{\lambda_i}{\lambda}v_i.
	$$
	Since $0\notin I$, we have $\lambda_0=0$. That is $0=\lambda t_0-(\lambda-1)\alpha_0$. Therefore $t_0=(1-\frac{1}{\lambda})\alpha_0$, i.e.
	$$
	\gamma=\alpha_0\cdot \gamma_0(P\in [0,v_1,\ldots,v_p]).
	$$
	
	Now we use the boundedness and ACC assumption. Let $v=\frac{e}{a_e}$ be a non-zero vertex of $\square$. By assumption, $v$ is contained in a bounded box. Since $a_e$ satisfies ACC, it is bounded above. It follows that $e$ belongs to a bounded box, hence finite. Therefore $a_e$ is bounded away from zero. Moreover, the lattice points of $\square$ belong to a given finite set.
	
	We show that $\gamma$ is away from zero, in each of the five cases:
	
	Case 0): Here $v_i\ne 0$ for all $0\le i\le p$, and $P=\sum_{i=0}^px_ie_i$ with $x_i\in \Q_{>0}$ finite.
	Then $t_i=\frac{a_ix_i}{a_0x_0+\cdots+a_px_p}$ for $0\le i\le p$.
	Therefore 
	$$
	\gamma=\frac{a_0x_0}{a_0x_0+\cdots+a_px_p}.
	$$
	Since $a_i$ satisfy ACC, they are bounded above. Then
	$\sum_{i=0}^pa_ix_i\le (\max_i a_i)\sum_{i=0}^px_i$ is bounded above.
	Since $a_0$ is bounded away from zero and $x_0$ is finite, it follows that $\gamma$ is bounded away from zero.
	
	Case 1): Here $P$ is contained in the relative interior of a simplex with vertices $0,v_1,\ldots,v_r$, and $\gamma$ is one of the barycentric coordinates. Write $v_i=\frac{e_i}{a_i}$. Then $P=\sum_{i=1}^rx_ie_i$,
	where $x_i\in \Q_{>0}$ are finite and $\sum_{i=1}^ra_ix_i<1$. We have 
	$$
	P=(1-\sum_{i=1}^ra_ix_i)\cdot 0+\sum_{i=1}^ra_ix_iv_i.
	$$
	If $\gamma=a_ix_i$, it is away from zero from above. If $\gamma=1-\sum_{i=1}^ra_ix_i$, it satisfies DCC since $a_i$ satisfy ACC and $x_i$ are finite. Therefore $\gamma$ is away from zero.
	
	Case 2): Here $v_i\ne 0$ for all $0\le i\le p$, and $P=0=\sum_{i=0}^px_ie_i$ with $x_i\in \Q_{>0}$ finite. We have 
	$$
	P=\sum_{i=0}^p\frac{a_ix_i}{a_0x_0+\cdots+a_px_p}v_i.
	$$
	So $\gamma=\frac{a_0x_0}{a_0x_0+\cdots+a_px_p}$. As in case 0), it is away from zero.
	
	Case 3): Here $\gamma$ is strictly larger than another $\gamma$ of type 2), in at most the same dimension. Therefore $\gamma$ is away from zero.
	
	Case 4): Here $0=\sum_{i=0}^pz_ie_i$, unique with the property $z_i\in \Z$, $\gcd(z_i)=1$. Moreover, $z_0>0$. It follows that 
	$$
	\alpha_0=\frac{a_0z_0}{a_0z_0+\cdots+a_pz_p}.
	$$
	As above, $\alpha_0$ is away from zero. Since $\gamma/\alpha_0$ is another 
	$\gamma$ of type 1) in at most the same dimension, we deduce that $\gamma$ is away from zero.
\end{proof}

\begin{proof}[Proof of Theorem~\ref{GenPik}] We show by induction on $d$ that $\gamma(\Lambda,\square)\ge c(d,\cA)$, where $c(d,\cA)$ is a positive constant which depends only on $d$ and the ACC set $\cA$. If $d=1$, apply Lemma~\ref{gP1}. Suppose $d>1$. Denote $\gamma=\gamma(\Lambda,\square)$.
	
Denote $\tau=\lambda_1(\Lambda,\square-\square)$. Choose $0\ne b\in \Lambda\cap \tau (\square-\square)$. It must be primitive. Let $\pi\colon \Lambda\to \Lambda'=\Lambda/\Z b$ be the projection, let $\square'$ be the image of $\square$ under $\pi\colon \Lambda_\R\to \Lambda'_\R$. 
Since $\pi$ maps interior points of $\square$ onto interior points of $\square'$, we obtain $\Lambda'\cap \Int\square'\ne \emptyset$.
	
We have $0\in \square'$. Let $0\ne v'\in \square'(0)$. There exists $0\ne v\in \square(0)$ such that $\pi(v)=v'$. May write $v=\frac{e}{a_e}$, where $e\in \Lambda^{prim}$ and $a_e\in \cA$. Then $\pi(e)=z e'$, with $e'\in {\Lambda'}^{prim}$ and $z\in \Z_{\ge 1}$. Therefore $v'=e'/a_{e'}$ with $a_{e'}=\frac{a_e}{z}$. We deduce that $(\Lambda',\square')$ satisfies the same assumptions as $(\Lambda,\square)$, with the new ACC set $\cA'=\cA\cdot\{\frac{1}{z};z\in \Z_{\ge 1}\}$. By induction on $d$, $\gamma':=\gamma(\Lambda',\square')$ satisfies $\gamma'\ge c(d-1,\cA')>0$.

We claim that the following inequality holds:
$$
\tau+2\gamma\ge \gamma'.
$$ 
Indeed, we may suppose $\tau<\gamma'$. Choose $P'\in \Lambda'\cap \Int\square'$ such that $\gamma'=\gamma(P'\in \square')$. 
There exist $x,y\in \partial\square$ such that $x-y=\frac{b}{\tau}$.
Denote $\pi(x)=\pi(y)$ by $v'$. We have $P'\in (1-\gamma')\square'+\gamma' v'$. Then $P'= (1-\gamma')w'+\gamma' v'$ for some $w'\in \square'$. 
Therefore $\square_{P'}\supseteq (1-\gamma')\square_{w'}+\gamma'\square_{v'}$, where for $x'\in \square'$ we denote $\square_{x'}=\{x\in \square; \pi(x)=x'\}$. We obtain
	$\length \square_{P'} \ge (1-\gamma')\length\square_{w'}+\gamma'\length \square_{v'}$. Since $\square_{v'}=[x,y]$ is an interval of length $\tau^{-1}$, we obtain 
	$$
	\length \square_{P'}\ge \frac{\gamma'}{\tau}.
	$$
	Since $\tau<\gamma'$, it follows that $\square_{P'}$ is an interval of length strictly larger than $1$, hence it contains an interior lattice point $P$. From Lemma~\ref{lig}, we may choose $P$ so that 
	$$
	2\gamma(P\in \square_{P'})+\frac{1}{\length \square_{P'}}\ge 1.
	$$
	We obtain
	$$
	\gamma(P\in \square_{P'})\ge \frac{\gamma'-\tau}{2\gamma'}.
	$$
	By Lemma~\ref{cecr}, we deduce
	$$
	\gamma(P\in \square)\ge \gamma(P\in \square_{P'})\cdot \gamma(P'\in \square')\ge \frac{\gamma'-\tau}{2}.
	$$
	Therefore $\gamma\ge \frac{\gamma'-\tau}{2}$, which is equivalent to the claim.
	
	Suppose $\tau\le \frac{\gamma'}{2}$. Then $\gamma\ge \frac{\gamma'}{4}\ge \frac{c(d-1,\cA')}{4}$. Therefore $\gamma$ is bounded away from zero.
	
	Suppose $\tau\ge \frac{\gamma'}{2}$. Then $\tau \ge \frac{c(d-1,\cA')}{2}$, hence $\tau$ is bounded away from zero.
	Therefore $(\Lambda,\square)$ is bounded. Then $\gamma$ is bounded away from zero by Lemma~\ref{BndPik}.
\end{proof}

\begin{rem}
	Theorem~\ref{GenPik} implies the non-effective version of Pikhurko's Theorem~\cite{Pikh01}. Indeed, note first that Pikhurko's result takes the following equivalent form: if $\square\subset \Lambda^d_\R$ is a $d$-dimensional compact polytope with vertices
	contained in $l^{-1}\Lambda$, for a positive integer $l$, and $\Lambda\cap \Int\square\ne \emptyset$, then $\gamma(\Lambda,\square)\ge c(d,l)>0$.
	
	Choose an interior lattice point $P_0$. Denote $\square_0=\square-P_0$. Then $\square_0$ is a polytope containing $0$, its non-zero vertices are of the form $\frac{e}{a_e}$ where $e\in \Lambda^{prim}$ and $a_e$ belongs to the ACC set $\{\frac{l}{z};z\in \Z_{\ge 1}\}$, and there
	exists a bijection between the interior lattice points of $\square_0$ and those of $\square$, preserving the coefficient of asymmetry.
	Therefore $\gamma(\Lambda,\square)=\gamma(\Lambda,\square_0)\ge c(d,l)>0$.
\end{rem}


\subsection{A generalization of Nill-Ziegler's theorem~\cite{NZ11} }


\begin{thm}\label{SerPoly}
	Let $\Lambda\simeq \Z^d$ be a $d$-dimensional lattice, let 
	$\square\subset \Lambda_\R$ be a $d$-dimensional compact polytope
	such that $\Lambda\cap \Int\square=\emptyset$.
	Suppose $0\in \square$ and $\square(0)\subset \{0\}\cup \Lambda^{prim}/\cA$, where $\cA\subset (0,+\infty)$ is a given ACC set.
	Then there exists a non-zero projection of lattices 
	$\pi\colon \Lambda\to \Lambda'$ such that if we denote by $\square'=\pi_\R(\square)$ the image polytope, then $\Lambda'\cap \Int\square'=\emptyset$ and $(\Lambda',\square')$ is bounded.
\end{thm}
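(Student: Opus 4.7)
The plan is to induct on $d=\rk \Lambda$. The base case $d=1$ should be immediate: a compact interval $\square\subset\R$ containing $0$ with no interior integer point is contained in $[-1,1]$, so $(\Lambda,\square)$ is already bounded and $\pi=\id$ suffices.

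For $d\ge 2$ I would introduce $\cA':=\cA\cdot \{1/z:z\in \Z_{\ge 1}\}$ (still an ACC subset of $(0,+\infty)$) and let $c:=c(d-1,\cA')>0$ be the uniform Pikhurko lower bound provided by Theorem~\ref{GenPik} in dimension $d-1$. Setting $\tau:=\lambda_1(\Lambda,\square-\square)$, I split into two cases. If $\tau\ge c$ then $(\Lambda,\square)$ itself is bounded by the equivalence of conditions (a),(b),(c) in the definition, and $\pi=\id_\Lambda$ works. Otherwise pick a primitive $b\in \Lambda$ with $b\in \tau(\square-\square)$ and form the quotient $\pi\colon \Lambda\to \Lambda':=\Lambda/\Z b$, $\square':=\pi_\R(\square)$. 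A routine check shows that $(\Lambda',\square')$ inherits every hypothesis of Theorem~\ref{SerPoly} except possibly $\Lambda'\cap \Int\square'=\emptyset$: $\square'$ has dimension $d-1$ and contains $0$, and each non-zero vertex of $\square'$ arises as $\pi(e/a_e)=e'/(a_e/z)$ with $e'\in(\Lambda')^{prim}$ and $a_e/z\in \cA'$.

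The heart of the argument is verifying that $\Lambda'\cap \Int\square'=\emptyset$. Suppose otherwise: $P'\in \Lambda'\cap \Int\square'$. Then the just-proved Theorem~\ref{GenPik} applies to $(\Lambda',\square')$, yielding $\gamma':=\gamma(P'\in \square')\ge c$. I then reuse precisely the fibering argument from the proof of Theorem~\ref{GenPik}: the fiber $\square_{P'}=\square\cap \pi_\R^{-1}(P')$ satisfies $\length \square_{P'}\ge \gamma'/\tau>1$ when measured with $b$ as a unit vector, hence contains a lattice point of the affine line $\pi_\R^{-1}(P')$ in its relative interior. A standard supporting-hyperplane argument (distinguishing whether a supporting functional is vertical or has non-trivial $b$-component) shows that $\relint \square_{P'}\subseteq \Int\square$ whenever $P'\in \Int\square'$, so the resulting lattice point lies in $\Lambda\cap \Int\square$, contradicting the hypothesis.

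Once $(\Lambda',\square')$ is known to satisfy all hypotheses in dimension $d-1$, the induction hypothesis produces a further projection $\pi'\colon \Lambda'\to \Lambda''$ with $\Lambda''\cap \Int\square''=\emptyset$ and $(\Lambda'',\square'')$ bounded, and the composition $\pi'\circ\pi$ is the desired map. The main obstacle I anticipate is precisely the fibering step: one has to couple the lower bound for the Pikhurko constant in the quotient (Theorem~\ref{GenPik}) with the strict inequality $\tau<c$ in a way that forbids new interior lattice points from appearing upon projection. Without the ACC hypothesis the constant $c$ would degenerate to zero and the whole dichotomy would collapse.
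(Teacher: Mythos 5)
Your proposal is correct and follows essentially the same route as the paper: project along a lattice vector achieving $\lambda_1(\Lambda,\square-\square)$ and combine the fiber-length estimate with the uniform lower bound from Theorem~\ref{GenPik} in dimension $d-1$ (the paper projects first and then case-splits on whether $\Lambda'\cap\Int\square'$ is empty, which is the contrapositive of your $\tau\ge c$ versus $\tau<c$ dichotomy). One small correction to your write-up: Theorem~\ref{GenPik} only bounds $\gamma(\Lambda',\square')=\max_{Q\in\Lambda'\cap\Int\square'}\gamma(Q\in\square')$ from below, so in the contradiction argument you must take $P'$ to be a maximizing interior lattice point rather than an arbitrary one.
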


\begin{proof} We use induction on $d$.
	Suppose $d=1$. We may suppose $\Lambda=\Z$ and $\square$ is an interval with no interior lattice points. Equivalently, $\square\subseteq [z,z+1]$ for some $z\in \Z$. We may take the identity as the projection.
	
	Suppose from now that $d>1$ and the statement holds in smaller dimension.
	Denote $\tau=\lambda_1(\Lambda,\square-\square)$. 
	Choose $0\ne b\in \Lambda\cap \tau(\square-\square)$. Let $\Lambda\to \Lambda':=\Lambda/\Z b$ be the projection, let $\square'$ be the image of $\square$. We have $0\in \square'$ and $\square'(0)\subset {\Lambda'}^{prim}/\cA'$, where $\cA'=\cA\cdot \{\frac{1}{n};n\in \Z_{\ge 1} \}$ satisfies again ACC. We have two possibilities:
	
	1) Case $\Lambda'\cap \Int\square'=\emptyset$. Then we may replace $(\Lambda,\square,\cA)$ by $(\Lambda',\square',\cA')$. By induction,
	there exists a projection $\Lambda'\to \Lambda''$ with the desired properties. Its composition with $\Lambda\to \Lambda'$ satisfies the claim.
	
	2) Case $\Lambda'\cap \Int\square'\ne \emptyset$. We claim that the following inequality holds:
	$$
	\tau\ge \gamma(\Lambda',\square').
	$$
	Indeed, let $Q\in \Lambda'\cap \Int\square'$. Since $\square$ contains
	no interior lattice points, the fiber $\square_Q$ of $\square$ over $Q$ is an interval without interior integer points (with respect to basis $b$). Therefore
	$\length \square_Q\le 1$. On the other hand, there exists $p\in \square'$ such that $\length \square_p=\tau^{-1}$.
	If $Q\in (1-t)\square'+tp$, then $\length \square_Q\ge \frac{t}{\tau}$, and therefore $\tau\ge t$. We deduce 
	$$
	Q\notin \Int((1-\tau)\square'+\tau p).
	$$
	Let $Q'\in \partial (\square')$ be the unique point such that $Q\in (p,Q')$. Then $\frac{Q-p}{Q'-p}\ge 1-\tau$, that is 
	$$
	\tau\ge \frac{Q'-Q}{Q'-p}\ge \gamma(Q\in \square').
	$$
	Taking the maximum after all $Q$, we obtain the claim.
	
	By Theorem~\ref{GenPik}, $\gamma(\Lambda',\square')\ge \gamma(d-1,\cA')$. From the claim, we deduce that $\tau$ is bounded away from zero, i.e. $(\Lambda,\square)$ is bounded. Take the identity as the projection. 
\end{proof}


\section{Germs of toric fibrations}


Let $(X/Y\ni P,B)$ consist of a {\em toric fibration} (i.e. a proper toric morphism with connected fibers) $f\colon X\to Y$,
where $Y$ is an affine toric variety with (unique) invariant point $P$, and
an $\R$-Weil divisor $0\le B\le \Sigma_X=X\setminus T_X$. 
We choose a canonical divisor $K$ on $X$ such that $K+\Sigma_X=0$. Note that $\mld_{f^{-1}P}(X,\Sigma_X)=0$.

In toric language, $f\colon T_N\emb(\Delta)\to T_{\bar{N}}\emb(\bar{\sigma})$ corresponds to a surjective homomorphism of lattices $\pi\colon N\to \bar{N}$, 
a strongly convex rationally polyhedral cone $\bar{\sigma}\subset \bar{N}_\R$ and a finite $N$-rational fan $\Delta$ with support $|\Delta|=\pi^{-1}(\bar{\sigma})$.
We have an induced injective homomorphism between dual lattices $\pi^*\colon \bar{M}\to M$. The existence of an invariant point on $Y$  is equivalent to $\bar{\sigma}-\bar{\sigma}=\bar{N}_\R$.

We use the same notation for a homomorphism of lattices and its extension to a homomorphism of $\R$-vector spaces.
Note that $|\Delta|$ contains $\Ker\pi$. Therefore 
$|\Delta|^\vee=\pi^*(\bar{\sigma}^\vee)$, and the correspondence $\bar{\tau}\mapsto \pi^{-1}(\bar{\tau})$ is a bijection between the faces of the cones $\bar{\sigma}\subset \bar{N}_\R$ and $|\Delta|\subset N_\R$. In particular, 
$$
\Int|\Delta|=\pi^{-1}(\Int\bar{\sigma}), \  \partial|\Delta|=\pi^{-1}(\partial\bar{\sigma}).
$$
If we denote $\Delta(1)=\{e_i;i\}$, then $B=\sum_i (1-a_i)V(e_i)$ with $a_i\in [0,1]$. Note that $(e_i)_i$ generate $N_\R$, since $\bar{\sigma}(1)$ generates $\bar{N}_\R$.
The invariant $\R$-Weil divisor $K+B=-\sum_ia_iV(e_i)$ may not be $\R$-Cartier. Its {\em moment polytope} is 
$$
\square_{-K-B}=\{m\in M_\R; (\chi^m)-K-B\ge 0\}=
\{m\in M_\R; \langle m,e_i\rangle+a_i\ge 0 \ \forall i\}.
$$
Denote 
$$
U=\Conv(\{0\}\cup \{\frac{e_i}{a_i};a_i>0\})+\sum_{a_i=0}\R_{\ge 0}e_i\subset N_\R.
$$
Both $\square$ and $U$ are polyhedral convex sets containing the origin. Moreover, $a_i=0$ for all $i$ if and only if $B=\Sigma_X$. 

We have $\square=C+\pi^*(\bar{\sigma}^\vee)$, where $C$ is a compact polytope. In particular, $\square$ is compact if and only if $Y=P$. Note that $U$ is compact if and only if $a_i>0$ for all $i$.

The pair $(M,\square)$ determines $\pi,|\Delta|,\bar{\sigma}$. Indeed,
$|\Delta|=\{e\in N_\R; \inf \langle e,\square\rangle>-\infty\}$. Since
$0$ is a vertex of $\bar{\sigma}$, we have 
$\Ker\pi=\{e\in N_\R; \length \langle e,\square\rangle<+\infty\}$. Therefore the quotient $N_\R\to \bar{N}_\R$ can be recovered from $(M,\square)$, and $\bar{N},\bar{\sigma}$ are the images of $N,|\Delta|$.

By definition, $\square=U^*$. By duality for polar sets, $U=\square^*$, i.e.
$$
U=\{e\in N_\R; -h_\square(e)\le 1\}.
$$ 
Therefore $|\Delta|=\cup_{t\ge 0}tU$.

\begin{lem}\label{inde}
$\Int U =\{e\in \Int|\Delta|; -h_\square(e)<1\}$.
\end{lem}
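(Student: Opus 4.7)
The plan is to rewrite both sides as conditions on the function $\varphi := -h_\square$ and then exploit the standard continuity and homogeneity properties of support functions. By the polar duality $U = \square^*$ already in the excerpt, one has $U = \{e \in N_\R; \varphi(e) \le 1\}$. Since $\square = C + \pi^*(\bar{\sigma}^\vee)$ with $C$ compact, the effective domain $\{\varphi < +\infty\}$ equals $|\Delta| = \pi^{-1}(\bar{\sigma})$ (on the complement, the linear form is unbounded below on the recession part). Being the support function of $-\square$, $\varphi$ is convex, positively homogeneous, and lower semicontinuous on $N_\R$; moreover $\varphi \ge 0$ because $0 \in \square$. The key convex-analytic input is that $\varphi$, being convex and finite on $|\Delta|$, is continuous on $\Int|\Delta|$.

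With this in hand, both inclusions become essentially one-liners. For $\supseteq$, if $e \in \Int|\Delta|$ with $\varphi(e) < 1$, continuity on $\Int|\Delta|$ yields an open neighborhood $V \subseteq \Int|\Delta|$ of $e$ on which $\varphi < 1$; then $V \subseteq U$, so $e \in \Int U$. For $\subseteq$, let $e \in \Int U$. Since $\Int U$ is open in $N_\R$ and $U \subseteq |\Delta|$, automatically $e \in \Int|\Delta|$. One has $\varphi(e) \le 1$; equality would lead to a contradiction by positive homogeneity, since $\varphi((1+\epsilon)e) = 1+\epsilon > 1$ for every $\epsilon > 0$, whereas $(1+\epsilon)e \in U$ for all sufficiently small $\epsilon > 0$ because $e$ is an interior point of $U$. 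Hence $\varphi(e) < 1$.

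The argument is purely convex-analytic and there is no serious obstacle; the only nontrivial input is the continuity of a convex function on the interior of its effective domain, which is a standard fact one cites rather than reproves. The small conceptual point worth flagging in writing is that although $\varphi$ is only lower semicontinuous on all of $N_\R$, continuity on $\Int|\Delta|$ is exactly what makes the strict sublevel set $\{\varphi < 1\}$ open there, matching the openness of $\Int U$.
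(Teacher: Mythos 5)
Your proof is correct, but it takes a genuinely different route from the paper's. For the inclusion $\supseteq$ (the nontrivial direction), you cite the standard convex-analysis fact that a convex function is continuous on the interior of its effective domain, so the strict sublevel set $\{\varphi<1\}$ is open inside $\Int|\Delta|$. The paper instead argues hands-on: fixing $e$ with $-h_\square(e)<1$ (so $e\in tU$ for some $t<1$), it shows $e\in\partial U$ iff there is $m\ne 0$ with $\min\langle m,U\rangle=\langle m,e\rangle=:c$, then uses the scaling $e\mapsto t^{-1}e\in U$ to force $t^{-1}c\ge c$, hence $c\ge 0$, hence $c=0$ (as $0\in U$). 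Since $|\Delta|=\bigcup_{t\ge 0}tU$, the condition $\min\langle m,U\rangle=0$ is exactly $m\in|\Delta|^\vee$, so $e\in\partial U$ iff $e\in\partial|\Delta|$. Your version is shorter and leans on a black-box theorem; the paper's version is self-contained within the polyhedral picture and has the bonus of exhibiting the supporting functional explicitly and identifying $\partial U\cap\Int(U)^c$ with $\partial|\Delta|$ on the sublevel set, which is closer in spirit to the toric combinatorics used in the rest of the section. Both establish the same equality; your proof is fine to substitute if one is happy to cite Rockafellar's continuity theorem for convex functions.
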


\begin{proof} The inclusion $\subseteq$ is clear. For the opposite inclusion, suffices to fix $e$ with $-h_\square(e)<1$ and show that $e\in \partial U$ if and only if $e\in \partial |\Delta|$. 
	
Note that $e\in U$. We have $e\in \partial U$ if and only if there exists $m\in M_\R\setminus 0$ such that 
$$
\min\langle m,U\rangle=\langle m,e\rangle=:c.
$$
There exists $t\in (0,1)$ such that $-h_\square(e)\le t$. That is $e\in tU$. Therefore $\langle m,t^{-1}e\rangle\ge c$,
i.e. $t^{-1}c\ge c$. Therefore $c\ge 0$. But $c\le 0$ since $0\in U$.
Therefore $c=0$. We obtain 
$$
\min\langle m,U\rangle=\langle m,e\rangle=0.
$$
Since $|\Delta|=\cup_{t\ge 0}tU$, the condition $\min\langle m,U\rangle=0$ is
equivalent to $m\in |\Delta|^\vee$. Therefore $e\in \partial U$ if and only if there exists $0\ne m\in |\Delta|^\vee$ such that $\langle m,e\rangle=0$, i.e. $e\in \partial |\Delta|$.
\end{proof}

Note that $U\cap \partial |\Delta|$ is the union of proper faces of $U$ which contain the origin. Also, for $t>0$, we have 
$-h_\square(e)\ge t\ \forall e\in N^{prim}\cap \Int |\Delta|$ if and only if $N^{prim}\cap \Int(tU)=\emptyset$, if and only if $N\cap \Int(tU)\subset \{0\}$. Therefore 
$$
\min\{-h_\square(e);e\in N^{prim}\cap \Int|\Delta| \}=
\max\{t\ge 0; N\cap \Int(tU)\subset \{0\}\}.
$$

\begin{lem}\label{semimld}
Suppose $-K-B$ is $f$-semiample. Let $t>0$. Then $\mld_{f^{-1}P}(X,B)\ge t$ if and only if $N\cap \Int(tU)\subset \{0\}$.
\end{lem}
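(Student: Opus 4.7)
The plan is to translate both conditions in the lemma into statements about $-h_\square:|\Delta|\to \R$ and then invoke Lemma~\ref{inde}. The hypothesis that $-K-B$ is $f$-semiample implies $K+B$ is $\R$-Cartier; consequently $-h_\square$ is a positively homogeneous function on $|\Delta|$, linear on each cone of $\Delta$, with $-h_\square(e_i)=a_i\ge 0$ (so $-h_\square\ge 0$ on $|\Delta|$).

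The first key input is the toric log discrepancy formula: for a primitive $e\in N^{prim}\cap |\Delta|$ and the corresponding toric valuation $E$,
$$
a_E(X,B) = -h_\square(e).
$$
I would establish this by the standard toric computation: refine $\Delta$ by adding the ray $\R_{\ge 0}e$ to obtain $\pi\colon X'\to X$; since $K+B$ is $\R$-Cartier, $-K_{X'}-B_{X'}$ has the same moment polytope $\square$, and reading off the coefficient of $V(e)$ in $B_{X'}$ gives $1+h_\square(e)$. Comparing with $K_{X'}+B_{X'}=\pi^*(K+B)$ yields $a_E(X,B) = 1 - (1+h_\square(e)) = -h_\square(e)$. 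The center of $E$ is $V(\sigma)$, where $\sigma\in \Delta$ is the minimal cone containing $e$; by the standard description of toric morphisms, $V(\sigma)\subseteq f^{-1}(P) = f^{-1}(V(\bar\sigma))$ if and only if the smallest face of $\bar\sigma$ containing $\pi(\sigma)$ equals $\bar\sigma$, equivalently $e\in \pi^{-1}(\Int\bar\sigma) = \Int|\Delta|$. Since the mld of a toric pair is attained on toric valuations, this gives
$$
\mld_{f^{-1}P}(X,B)\ge t \quad\Longleftrightarrow\quad -h_\square(e)\ge t \ \text{ for every } e\in N^{prim}\cap \Int|\Delta|.
$$

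Finally, Lemma~\ref{inde} combined with positive homogeneity closes the loop. Scaling gives $\Int(tU) = \{e\in \Int|\Delta| : -h_\square(e) < t\}$, so the condition $N\cap \Int(tU)\subseteq \{0\}$ is equivalent to $-h_\square(e)\ge t$ for every nonzero $e\in N\cap \Int|\Delta|$. Writing each nonzero $e\in N$ as $ke'$ with $k\in \Z_{\ge 1}$ and $e'\in N^{prim}$ (noting $e\in \Int|\Delta|$ if and only if $e'\in \Int|\Delta|$, since $|\Delta|$ is a cone), positive homogeneity of $-h_\square$ reduces this to the primitive condition displayed above. The one subtle ingredient is the log discrepancy formula, which genuinely uses that $K+B$ is $\R$-Cartier; everything else is direct rewriting.
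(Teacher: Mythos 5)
Your overall architecture matches the paper's proof: establish the formula $a_{E_e}(X,B)=-h_\square(e)$ for $e\in N^{prim}\cap|\Delta|$, note that $c_X(E_e)\subseteq f^{-1}P$ exactly when $e\in\Int|\Delta|$, reduce to toric valuations via a toric log resolution, and close with Lemma~\ref{inde} and positive homogeneity. The last three steps are fine as written.

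The gap is in the log discrepancy formula, and in your diagnosis of which hypothesis it consumes. You assert that $\R$-Cartierness of $K+B$ already forces $-h_\square$ to be linear on each cone of $\Delta$ with $-h_\square(e_i)=a_i$, and that one can ``read off'' the coefficient of $V(e)$ in $B_{X'}$ from the moment polytope. Neither holds under $\R$-Cartierness alone. The coefficient of $V(e)$ in $B_{X'}=\pi^*(K+B)-K_{X'}$ is $1-\langle\psi_\sigma,e\rangle$, where $\psi_\sigma\in M_\R$ is the Cartier datum of $K+B$ on the maximal cone $\sigma$ containing $e$; this equals $1+h_\square(e)$ if and only if $-\psi_\sigma$ realizes $\min\langle\cdot,e\rangle$ over $\square$, which over the affine base $Y$ is exactly the $f$-semiampleness of $-K-B$, i.e.\ $-\psi_\sigma\in\square$ for every $\sigma\in\Delta(top)$. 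A moment polytope does not determine the coefficients of a non-nef $\R$-Cartier divisor: for example on $X=\mathbb{F}_3$ (rays $(1,0),(0,1),(-1,3),(0,-1)$) with $B=0$ one has $-h_\square\bigl((0,1)\bigr)=2/3$ while $a_{E_{(0,1)}}(X,0)=1$, so the formula fails. Thus your closing remark that the ``one subtle ingredient'' genuinely uses only that $K+B$ is $\R$-Cartier misattributes the hypothesis: the paper gets the formula by explicitly invoking $-\psi_\sigma\in\square$, so that $a_{E_e}(X,B)=-\langle-\psi_\sigma,e\rangle=-\min\langle\square,e\rangle$, and this is precisely where $f$-semiampleness enters.
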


\begin{proof}
In particular, $K+B$ is $\R$-Cartier. For each maximal cone $\sigma\in \Delta(top)$, there exists $\psi_\sigma\in M_\R$ such that 
$(\chi^{\psi_\sigma})+K+B=0$ on the affine open subset $U_\sigma\subset X$.
Each $e\in N^{prim}\cap \sigma$ defines an invariant prime divisor
$E_e$ on a toric resolution of $U_\sigma$, with log discrepancy 
$$
a_{E_e}(X,B)=\langle \psi_\sigma,e\rangle.
$$
Note that $\square=\cap_{\sigma\in \Delta(top)}(-\psi_\sigma+\sigma^\vee)$.
Since $Y$ is affine, the assumption $-K-B$ $f$-semiample is equivalent to 
$-\psi_\sigma\in \square$ for every $\sigma\in \Delta(top)$. For $e\in N^{prim}\cap \sigma$, we obtain
$$
a_{E_e}(X,B)=-\langle -\psi_\sigma,e\rangle=-\min\langle \square,e\rangle.
$$
Since $U_\sigma$ cover of $X$, we obtain
$$
a_{E_e}(X,B)=-h_\square(e) \ \forall e\in N^{prim}\cap |\Delta|.
$$

Note that $c_X(E_e)\subset f^{-1}P$ if and only if $\pi(e)\in \Int\bar{\sigma}$, which is equivalent to $e\in \Int|\Delta|$. Also, $(X,B)$ admits a toric log resolution such that the preimage of $P$ is an invariant subset, either the whole ambient, or of pure codimension one.
Therefore $\mld_{f^{-1}P}(X,B)\ge t$ if and only if 
$-h_\square(e)\ge t$ for every $e\in N^{prim}\cap \Int|\Delta|$.
From above, this is equivalent to $N\cap \Int(tU)\subset \{0\}$.
\end{proof}

We have $0\in \Int U$ if and only if $Y=P$, by Lemma~\ref{inde}. Therefore we have two possibilities:
\begin{itemize}
	\item Suppose $Y=P$. Then $N\cap \Int(tU)\subset \{0\}$ if and only if $N\cap \Int(tU)=\{0\}$.
	\item Suppose $\dim Y>0$. Then $N\cap \Int(tU)\subset \{0\}$ if and only if $N\cap \Int(tU)=\emptyset$.
\end{itemize}

\begin{defn}
Let $t\ge 0$. We say that $(X/Y\ni P,B)$ satisfies {\em property 
$(C_t)$} if there exists an $\R$-Weil divisor $B^+\ge B$ such that $K+B^+\sim_\R 0$ and $\mld_{f^{-1}P}(X,B^+)\ge t$. 
\end{defn}

Note that $(X/Y\ni P,B)$ always satisfies property $(C_0)$, as we may take $B^+=\Sigma_X$.

\begin{lem}\label{chRco} Let $t>0$. 
\begin{itemize}
	\item[1)] Suppose $-K-B$ is $f$-semiample. If $(C_t)$ holds, then  $\mld_{f^{-1}P}(X,B)\ge t$. The converse holds except the following special case: $t=1$, $Y=P$, $B=0$, $X$ has canonical singularities and $-K$ is semiample.
	\item[2)] We have $-K-B=\sum_i a_iV(e_i) \ge 0$. Consider a commutative diagram	
\[ 
\xymatrix{
	   &   \tilde{X}\ar[dl]_\mu\ar@{-->}[dr]^\chi  &  \\
	X \ar[dr]_f &       &  X' \ar[dl]^{f'}   \\
	   &   Y   &       
}
\]
where $\mu$ is a toric $\Q$-factorialization, $\tilde{B}:=\mu^{-1}_*B$,
$\chi$ is an MMP over $Y$ for $-K_{\tilde{X}}-\tilde{B}$, $B':=\chi_*\tilde{B}$ and $-K_{X'}-B'$ is $f'$-semiample.
Then:
\begin{itemize}
	\item[a)] We have $\square(X,B)=\square(X',B')$
	and $U(X,B)=U(X',B')$.
	\item[b)] $(C_t)$ holds for $(X/Y\ni P,B)$ if and only if it holds for 
	$(X'/Y\ni P,B')$.
\end{itemize} 
\item[3)] Suppose $B$ has rational coefficients. If $(C_t)$ holds, there exists a solution $B^+$ with rational coefficients.
\item[4)] Suppose $(X'/Y\ni P,B')$ is not the special case in 1). Then 
$(C_t)$ holds for $(X/Y\ni P,B)$ if and only if $N\cap \Int(tU)\subset \{0\}$.
\end{itemize}

\end{lem}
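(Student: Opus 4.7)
For \textbf{Part (1)}, the forward direction is immediate from monotonicity of log discrepancies: since $B^+\ge B$, $a_E(X,B^+)\le a_E(X,B)$ for every geometric valuation $E$, so $\mld_{f^{-1}P}(X,B)\ge \mld_{f^{-1}P}(X,B^+)\ge t$. For the converse, I translate the mld hypothesis via Lemma~\ref{semimld} into the combinatorial condition $N\cap \Int(tU)\subseteq\{0\}$, and then seek an invariant $B^+$ parametrized by $m'\in M_\R$ via $-K-B^+=(\chi^{m'})$: this gives $K+B^+\sim_\R 0$ automatically, while the remaining requirements become $\langle m',e_i\rangle\in [0,a_i]$ for all $i$ (so that $B^+$ is a boundary with $B^+\ge B$) and $\langle m',e\rangle\ge t$ for all $e\in N^{prim}\cap \Int|\Delta|$ (the mld bound, again by Lemma~\ref{semimld} applied to $B^+$). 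When $\dim Y>0$, any such $m'$ must vanish on $\ker \pi$, so $m'=\pi^*\bar m'$ for some $\bar m'\in \bar M_\R$; the problem descends to finding $\bar m'\in \bar\sigma^\vee$ in the compact polytope cut out by the $a_i$-constraints with $\langle\bar m',\bar f\rangle\ge t$ for every lattice point $\bar f\in\bar N\cap \Int\bar\sigma$, and this is exactly feasible by the combinatorial mld condition translated via polar duality between $U$ and $\square$. When $Y=P$, invariant $B^+$ forces $B^+=\Sigma_X$ with $\mld=0$, so non-invariant $B^+$ coming from the semiample system $|-K-B|_\R$ must be used; this succeeds outside the listed exception $t=1,\,Y=P,\,B=0$ with $X$ canonical and $-K$ semiample, which records precisely the case where no $1$-complement of $(X,0)$ has $\mld\ge 1$.

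For \textbf{Part (2)}, the $\Q$-factorialization $\mu$ is small, so $\mu^{-1}_*$ identifies invariant prime divisors with their coefficients in $B$: thus $\square(X,B)=\square(\tilde X,\tilde B)$ and $U(X,B)=U(\tilde X,\tilde B)$. For the MMP $\chi$ each flip is small and preserves $\square,U$; each divisorial contraction removes a ray $e_j$, and I verify that the defining inequality $\langle m,e_j\rangle\ge -a_j$ is implied by the remaining inequalities on the new fan (since $e_j$ lies in the relative interior of the cone spanned by neighboring rays with the $a_i$'s satisfying the corresponding linear relation, as is standard for toric MMP contractions of $-K-\tilde B$-negative extremal rays). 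This gives (a). For (b), since $(C_t)$ depends, by Part (1) combined with Lemma~\ref{semimld}, only on the data $(N,U,t)$ together with the geometric valuations centered over $P$, and since $\mu$ is small while $\chi$ contracts only divisors over $Y$ (so interior valuations of $f^{-1}P$ have unchanged discrepancies), the equivalence of $(C_t)$ on the two sides follows; alternatively one can directly transport $B^+$ along $\mu$ (small: pullback works) and along $\chi^{-1}$ (discrepancies do not decrease under an MMP for $-K-B$).

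For \textbf{Part (3)}, when $B$ has rational coefficients all $a_i$ are rational and $\square,U$ are rational polytopes; the feasibility set for $m'$ (or $\bar m'$) from the construction in (1) is a rational polyhedron, nonempty by the converse already proved, hence contains a rational point which gives a rational $B^+$. For \textbf{Part (4)}, chaining (2-b), (1) (outside the excluded case), and Lemma~\ref{semimld} yields
\[
(C_t)\text{ for }(X/Y\ni P,B)\ \Longleftrightarrow\ (C_t)\text{ for }(X'/Y\ni P,B')\ \Longleftrightarrow\ \mld_{f'^{-1}P}(X',B')\ge t\ \Longleftrightarrow\ N\cap \Int(tU)\subseteq \{0\}.
\]

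\textbf{The main obstacle} is the backward direction of (1): one needs a careful polar-duality argument showing that the mld condition $N\cap \Int(tU)\subseteq \{0\}$ translates into the simultaneous feasibility of the polytope conditions on $\bar m'$ (boundary constraints plus the infinitely many lower bounds on $\bar N\cap \Int\bar\sigma$), and the proper case $Y=P$ is additionally subtle because every equivariant construction collapses to $B^+=\Sigma_X$ and one must appeal to non-equivariant complements --- isolating the special case precisely when such complements with $\mld\ge 1$ do not exist.
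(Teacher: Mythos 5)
The forward direction of (1), the sketch of (2), and parts (3)--(4) track the paper closely. The genuine issue is the converse of (1), which you yourself flag as ``the main obstacle'': your approach there is different from the paper's, and the step you leave unjustified is exactly where the difficulty lies.

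You try to produce a \emph{torus-invariant} $B^+$ by choosing $m'\in M_\R$ with $-K-B^+=(\chi^{m'})$, so the problem becomes a linear-programming feasibility: find one $m'$ with $\langle m',e_i\rangle\in[0,a_i]$ and $\langle m',e\rangle\ge t$ for all $e\in N^{prim}\cap\Int|\Delta|$. But the hypothesis $\mld_{f^{-1}P}(X,B)\ge t$, via Lemma~\ref{semimld}, only says that \emph{for each} interior primitive $e$ there is \emph{some} $m_e\in\square$ with $\langle m_e,e\rangle\le -t$; it does not assert that a single $m=-m'$ works simultaneously for all $e$, nor that such an $m$ can be chosen in $\square\cap(-|\Delta|^\vee)$. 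Your claim ``this is exactly feasible by the combinatorial mld condition translated via polar duality'' is an assertion, not an argument, and it is not clear it is even true in general (note also that when $Y=P$ the analogous LP has no solution for any $t>0$, which you observe, so the feasibility in the case $\dim Y>0$ is genuinely delicate and needs proof). Similarly, for $Y=P$ you reduce to ``non-invariant $B^+$ from $|-K-B|_\R$'' but do not actually carry this out. The paper avoids all of this: since $Y$ is affine, $f$-semiampleness lets one write $K+B+\sum_l d_lC_l=0$ with each $|C_l|$ base-point free; taking general $S_l\in|nC_l|$ gives $B^+=B+\sum_l\frac{d_l}{n}S_l$ with $K+B^+\sim_\R 0$ and $\mld\ge 0$, and when $\dim Y>0$ the general members meet no component of $f^{-1}P$, so $\mld_{f^{-1}P}(X,B^+)=\mld_{f^{-1}P}(X,B)\ge t$; when $Y=P$ one adjusts $n$ so that the added coefficients are $\le 1-t$, which works for $t<1$ and isolates the $t=1$ exception. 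This moving-divisor argument is the key idea you are missing, and replacing it by a feasibility claim for an invariant $m'$ is a gap, not merely an omission of routine detail.

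In (2a) your fan-theoretic verification (each contracted ray $e_j$ lies in the relative interior of the cone spanned by the remaining rays with the right linear relation among the $a_i$) is a more hands-on substitute for the paper's b-divisor argument (the negativity lemma gives $\overline{-K_{\tilde X}-\tilde B}=\overline{-K_{X'}-B'}+\bF$ with $\bF\ge 0$ exceptional over $X'$), and either route is fine. In (2b), your first justification--- that $(C_t)$ ``depends only on $(N,U,t)$''---implicitly uses part (1), which requires $f$-semiampleness and hence is not available on $X$; your second suggestion (transport $B^+$ along $\mu$ and $\chi$) is the paper's actual argument and should be made the primary one, with the direction from $X'$ to $X$ using effectivity of $\bF$ to ensure $B^+\ge B$.
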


\begin{proof} 1) Since $B\le B^+$, we have $\mld_{f^{-1}P}(X,B)\ge\mld_{f^{-1}P}(X,B^+)\ge t$. For the converse, suppose $\mld_{f^{-1}P}(X,B)\ge t$. Since $Y$ is affine, the assumption that $-K-B$ is $f$-semiample is equivalent to $K+B+\sum_ld_lC_l=0$ where 
$d_l$ are non-negative real numbers and $|C_l|$ are base point free linear systems. Let $n\ge \max_ld_l $. Choose general members $S_l\in |nC_l|$. Then $K+B+\sum_l \frac{d_l}{n}S_l\sim_\R 0$ and $\mld(X,B+\sum_l \frac{d_l}{n}S_l)\ge 0$.

Suppose $\dim Y>0$, i.e $f^{-1}P\subsetneq X$. Then no component of $S_l$
is contained in $f^{-1}P$. Therefore $\mld_{f^{-1}P}(X,B+\sum_l \frac{d_l}{n}S_l)\ge t$.

Suppose $\dim Y=0$, i.e. $f^{-1}P=X$. From $\mld(X,B)\ge t$ we deduce
$t\le 1$. Suppose $t<1$. Then we may choose $n$ such that $\max_l d_l\le n(1-t)$. Therefore $\mld_{f^{-1}P}(X,B+\sum_l \frac{d_l}{n}S_l)\ge t$.
Suppose $t=1$. Then $B=0$, $X$ has canonical singularities and $-K$ is semiample. If $K\le K+B^+\sim_\R 0$, then $B^+\ne 0$ since $-K$ is big on a proper toric variety. Therefore $(C_1)$ fails if $\dim Y=0$. 

2) The invariant $\R$-Weil divisor $-K_{\tilde{X}}-\tilde{B}=\sum_i a_i \tilde{V}(e_i)\ge 0$ is $\R$-Cartier, since $\tilde{X}$ is $\Q$-factorial.
Therefore its toric MMP over $Y$ exists, and terminates with a toric minimal model over $Y$. Since $\mu$ is small, we have 
$\square_{-K-B}=\square_{-K_{\tilde{X}}-\tilde{B} }$. We have 
$\overline{-K_{\tilde{X}}-\tilde{B} }=\overline{-K_{X'}-B_{X'}}+\bF$, 
where $\bF$ is an effective b-divisor which is exceptional over $X'$.
Therefore $\square_{-K_{\tilde{X}}-\tilde{B} }=\square_{-K_{X'}-B'}$.
We obtained $\square_{-K-B}=\square_{-K_{X'}-B'}$. By duality, 
$U(X,B)=U(X',B')$. This proves a).

For b), note first that property $(C_t)$ for $K+B$ is equivalent to that for $K_{\tilde{X}}+\tilde{B}$ (take push-forwards), since $\mu$ is small. 
Property $(C_t)$ for $K_{\tilde{X}}+\tilde{B}$ implies the same for $K_{X'}+B'$ (take push-forwards). The converse holds as well:  
${B'}^+$ is a solution on $X'$, then $(\overline{{B'}^+}+\bF)_X$ is a 
solution on $X$, since $\bF$ is effective. We conclude that b) holds.

3) Suppose $B$ has rational coefficients. Then so do $\tilde{B}$ and $B'$. Therefore $\bF$ has rational coefficients, hence there exists a rational solution on $X$ if and only if there exists a rational solution on $X'$.
We are reduced to the case $-K-B$ is $f$-semiample. Since $B$ has rational coefficients, this is equivalent to $|-nK-nB|$ is base point free for some 
$n\ge 1$. The argument of 1) gives a rational solution.

4) Apply 2), 1) and Lemma~\ref{semimld}.
\end{proof}

Suppose $B$ has rational coefficients. By Lemma~\ref{chRco}.3) we deduce that $(C_t)$ holds if and only if there exists an integer $n\ge 1$ and a member $D\in |-nK-nB|$ such that $\mld_{f^{-1}P}(X,B+\frac{1}{n}D)\ge t$.

\begin{rem}
$(C_t)$ is in fact a property of the germ of the fibration near the special fiber. Suppose $P\in V\subseteq Y$ is an open neighborhood, and 
there exists $B^+_{f^{-1}V}\ge B|_{f^{-1}V}$ on $f^{-1}V$ such that 
$K_{f^{-1}V}+ B^+_{f^{-1}V}\sim_\R 0$ and $\mld_{f^{-1}P}(f^{-1}V,B^+_{f^{-1}V})\ge t$. We claim that $(C_t)$ holds
over $Y$.
Indeed, we may suppose $V\ne Y$. In particular, $Y\ne P$. Then the special
case does not appear in Lemma~\ref{chRco}.1), and we may suppose $-K-B$ is $f$-semiample. Since $B^+_{f^{-1}V}\ge B|_{f^{-1}V}$ and the geometric valuations of $X$ with center contained in $f^{-1}P$ are also valuations of $f^{-1}V$, we obtain $\mld_{f^{-1}P}(X,B)\ge t$. The argument of 
Lemma~\ref{chRco}.1) shows that $(C_t)$ holds.
\end{rem}

\begin{thm}\label{toricBAB} Fix $d\ge 1$ and $t>0$.
Let $X$ be a proper toric variety of dimension $d$. Suppose there exists $B^+\ge 0$ such that $K+B^+\sim_\R 0$ and $\mld(X,B^+)\ge t$. Then $X$ belongs to finitely many isomorphism types.
\end{thm}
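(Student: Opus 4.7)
My strategy is to combine Borisov's local classification with the combinatorial machinery of Section~2 to bound first each cone locally and then the global fan.

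First, I would reduce to $X$ being $\Q$-factorial: a toric $\Q$-factorialization $\mu\colon \tilde X\to X$ is small and crepant, so $K_{\tilde X}+\mu^{-1}_*B^+ = \mu^*(K+B^+) \sim_\R 0$ and $\mld(\tilde X,\mu^{-1}_*B^+) = \mld(X,B^+) \ge t$. So I may replace $X$ by $\tilde X$. Then $B^+\ge 0$ gives $\mld(X,0) \ge t$, and for every maximal cone $\sigma\in\Delta$ the affine chart $U_\sigma$ is a germ of a $\Q$-factorial toric singularity of dimension $d$ with $\mld \ge t$. By Borisov's local classification each such $U_\sigma$ belongs to one of finitely many local series depending only on $(d,t)$, so every cone has bounded combinatorial type: the primitive rays of $\sigma$ in a local basis of $\Z\sigma(1)$ have bounded coordinates, and $[N:\Z\sigma(1)]$ is bounded.

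It remains to bound the global fan $\Delta$---the number of rays $r$ and the embedding of each $e_i$ in $N$. The natural combinatorial object is the compact polytope $U = \Conv(\{0\}\cup\{e_1,\ldots,e_r\}) \subset N_\R$; properness of $X$ (completeness of $\Delta$) puts $0$ in the interior of $U$. If $(N,U)$ can be shown bounded in the sense of Section~2-A, then the $e_i$'s lie in a bounded box of $N$, so only finitely many primitive vectors can occur; together with the local bound of the previous paragraph this yields finitely many fans up to $\GL(N)$ and hence only finitely many~$X$.

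The main obstacle is converting the log Calabi--Yau hypothesis (which involves the possibly non-invariant boundary $B^+$) into this bound on $(N,U)$. Naive torus-averaging of $B^+$ does \emph{not} preserve $\mld \ge t$: already on $\bP^d$ every torus-invariant member of $|-K|_\R$ has $\mld = 0$, because such a divisor is $\sum b_iV(e_i)$ with $\sum b_i = d+1$ and one forces $b_i \le 1-t$, which is incompatible with $b_i\ge 0$ summing to $d+1$. So the non-invariant $B^+$ must be exploited essentially. My approach would be to extract from $B^+$ a rational point $m_0$ in $\square_{-K}$ satisfying the strict inequalities $\langle m_0,e_i\rangle \le -t$ for every ray---equivalently $N\cap\Int(tU) = \{0\}$---and then apply Theorem~\ref{SerPoly} to a translation of $tU$ placing the origin on the boundary, together with Theorem~\ref{GenPik} to show that the resulting non-zero lattice projection would have to be trivial because $0\in \Int U$ and $X$ is proper. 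Making this extraction work without losing $\mld(X,B^+) \ge t$ is the technical heart of the argument.
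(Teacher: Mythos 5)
Your plan correctly identifies the target combinatorial object $U=\Conv\Delta(1)$, the condition $N\cap\Int(tU)=\{0\}$, and Theorem~\ref{GenPik} plus Minkowski's first theorem as the closing tool, which is indeed how the paper finishes. But the bridge you propose from the hypothesis to that combinatorial condition does not work. You ask to extract a single $m_0\in\square_{-K}$ with $\langle m_0,e_i\rangle\le -t$ for \emph{every} ray $e_i$; no such $m_0$ can exist on a complete toric variety. Indeed, completeness gives $0\in\Int\Conv\{e_i\}$, so $0=\sum_i\lambda_ie_i$ with $\lambda_i\ge 0$ not all zero, whence $0=\langle m_0,0\rangle=\sum_i\lambda_i\langle m_0,e_i\rangle\le -t\sum_i\lambda_i<0$, a contradiction. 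This is exactly the obstruction you yourself observe on $\bP^d$: one linear functional cannot separate $0$ from all the $e_i$. (The condition $N\cap\Int(tU)=\{0\}$ unwinds to: for each nonzero $e\in N$ there is \emph{some} $m\in\square_{-K}$ with $\langle m,e\rangle\le -t$; the witness $m$ must be allowed to depend on $e$.) The appeal to Theorem~\ref{SerPoly} on a ``translation of $tU$'' also cannot be made to work: $tU$ has the interior lattice point $0$, and a lattice translation merely moves it to another interior lattice point, so the hypothesis $\Lambda\cap\Int\square=\emptyset$ of Theorem~\ref{SerPoly} is never satisfied.

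The paper's route through the ``technical heart'' is Lemma~\ref{chRco}: one $\Q$-factorializes and runs the toric MMP for $-K_{\tilde X}-\tilde B$ to land in a model where $-K-B$ is semiample (Lemma~\ref{chRco}.2), which preserves $U$ and property~$(C_t)$); there, Lemma~\ref{chRco}.1) shows $(C_t)$ is equivalent to $\mld(X,B)\ge t$ by constructing $B^+$ from general members of base-point-free linear systems, and Lemma~\ref{semimld} translates the latter into $N\cap\Int(tU)\subset\{0\}$. After that, the argument is exactly your step (4) done with Theorem~\ref{GenPik}/\cite[Thm.~6.4]{Amb16} applied directly to $(N,tU)$ with its unique interior lattice point $0$ (here $\cA=\{1/t\}$ since the vertices of $tU$ are $te_i$ with $e_i$ primitive), giving $\gamma(0\in U)$ bounded below, hence $\vol_N(U)$ bounded, hence finitely many $(N,U)$, hence finitely many fans. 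Your preliminary $\Q$-factorialization plus Borisov local classification step is a detour that is not needed once this is set up.
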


\begin{proof}
We are in the above setup, with $Y$ a point and $B=0$. 
We may suppose $t<1$. Then the special case of Lemma~\ref{chRco} does not appear, and the assumption is equivalent to $N\cap \Int(tU)=\{0\}$. Note that $U=\Conv\Delta(1)$ is a polytope with extremal points in the lattice $N$, and $U$ contains $0$ in its interior. 
The argument of~\cite[Theorem 6.4]{Amb16} gives the claim: $\gamma(0\in U)$ is 
bounded away from zero, hence Minkowski's first main theorem gives 
$\vol_N(U-U)\le v(d,t)$. Since $U$ has extremal points in the lattice, it follows that the pair $(N,U)$ is finite up to isomorphisms of the lattice.
There are at most finitely many fans with the same set of $1$-dimensional cones. Therefore $(N,\Delta)$ is finite, up to
automorphisms of the lattice.
\end{proof}

Theorem~\ref{toricBAB} generalizes ~\cite{BB92} and ~\cite[Theorem 6.4]{Amb16}, and is the toric version of the following statement:
if $X$ is a proper variety of dimension $d$ such that there exists $B^+$ effective and big with $K+B^+\sim_\R 0$ and $\mld(X,B^+)\ge t>0$, 
then $X$ belongs to a bounded family depending only on $d$ and $t$. Indeed, by~\cite{Bir21} we may find $B\ge 0$ such that $n(K+B)\sim 0$ and 
$(X,B)$ is klt, where $n$ depends only on $d$ and $t$. By~\cite{HX15}, $X$ is bounded.

For the rest of this section, we suppose $\dim Y>0$. Then the special case of Lemma~\ref{chRco}.1) does not appear, $0\in \partial U$ and 
$(X/Y\ni P,B)$ satisfies $(C_t)$ if and only if $N\cap \Int(tU)=\emptyset$.

\begin{thm}\label{bndser} Fix $d\ge 1$ and an ACC set $\cA\subset (0,+\infty)$. 
Let $(X/Y\ni P,B)$ be a data as above, let $t>0$. Suppose $\dim X=d$, $\dim Y>0$ and $\frac{a_i}{t}\in \cA$ for all $e_i\in \Delta(1)$. 
The following are equivalent:
\begin{itemize}
 \item[a)] $(X/Y\ni P,B)$ satisfies $(C_t)$.
 \item[b)] There exists a non-zero surjective homomorphism of lattices $\Phi\colon N\to N'$ such that the image $U'=\Phi(U)\subset N'_\R$
 is a compact polytope, $N'\cap \Int(tU')=\emptyset$ and $(N',tU')$ is bounded.
\end{itemize}
\end{thm}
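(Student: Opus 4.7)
The plan is to derive b) from a) by first projecting out the recession cone of $U$ to land in a compact-polytope setting, then invoking Theorem~\ref{SerPoly}. The reverse direction b) $\Rightarrow$ a) is nearly immediate: by Lemma~\ref{chRco}.4 combined with the hypothesis $\dim Y > 0$, condition a) is equivalent to $N\cap \Int(tU)=\emptyset$, and any hypothetical $v\in N\cap \Int(tU)$ would satisfy $\Phi(v)\in N'\cap \Int(tU')=\emptyset$ via the standard identity $\Phi(\Int(tU))=\Int(\Phi(tU))=\Int(tU')$ for a surjective linear map and a full-dimensional source.

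For a) $\Rightarrow$ b), I will set $C=\sum_{a_i=0}\R_{\ge 0}e_i$ (the recession cone of $U$), $L=\Span(C)\subset N_\R$ (a rational subspace), and take $\Phi_0\colon N\to N_0:=N/(N\cap L)$ to be the quotient projection. Then $U_0:=\Phi_0(U)=\Phi_0(\Conv\{0,e_i/a_i:a_i>0\})$ is a compact polytope, and $\Phi_0$ is nonzero: otherwise $L=N_\R$, forcing $\Int C$ to be a nonempty open cone inside $\Int(tU)$ and hence to contain lattice points, contradicting $(C_t)$. Writing $\Phi_0(e_i)=n_ie'_i$ with $e'_i\in N_0^{prim}$ and $n_i\in \Z_{\ge 1}$ for those $i\in I_+$ with $\Phi_0(e_i)\ne 0$, the nonzero vertices of $tU_0$ take the form $e'_i/(a_i/(tn_i))$, with denominators in the set $\cA':=\cA\cdot\{1/n:n\in \Z_{\ge 1}\}$. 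This set is again ACC: a strictly increasing sequence in $\cA'$ is bounded above by $\max\cA<\infty$, which forces bounded denominators via a limiting argument, reducing to ACC of $\cA$. Granting the key step below, Theorem~\ref{SerPoly} applied to $(N_0,tU_0)$ with $\cA'$ produces a nonzero surjective lattice projection $\Phi_1\colon N_0\to N'$ with $\Phi_1(tU_0)$ a compact polytope containing no interior lattice point of $N'$ and $(N',\Phi_1(tU_0))$ bounded; the composition $\Phi:=\Phi_1\circ\Phi_0$ then satisfies b).

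The main obstacle is the intermediate claim $N_0\cap \Int(tU_0)=\emptyset$, which is needed in order to invoke Theorem~\ref{SerPoly}. The subtlety is that an interior lattice point $v_0\in N_0$ lifts via the identity $\Int(tU_0)=\Phi_0(\Int(tU))$ only to some real interior point $v\in \Int(tU)$, not necessarily a lattice one. I plan to argue as follows: take any lattice lift $v^*\in N$ of $v_0$, so that $v-v^*\in L$, and choose a rational point $c\in (N\cap L)\cap \relint C$, which is nonempty because $C$ is a rational cone spanning $L$. The subclaim $v+\relint C\subset \Int(tU)$ will follow from a recession-cone argument: for any $c'\in \relint C\subset C$ and any direction $w\in N_\R$, the interiority of $v$ gives $v+\delta w\in tU$ for small $\delta>0$, after which $c'$ lying in the recession cone of $tU$ yields $(v+\delta w)+c'\in tU$. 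For $k$ sufficiently large, $kc-(v-v^*)\in \relint C$ because $\relint C$ is an open cone in $L$ and $(v-v^*)/k\to 0$, hence $v^*+kc=v+(kc-(v-v^*))\in N\cap \Int(tU)$, contradicting $(C_t)$ and completing the proof.
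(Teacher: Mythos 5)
Your proof is correct and takes essentially the same route as the paper's: both directions reduce $(C_t)$ to $N\cap\Int(tU)=\emptyset$ via Lemma~\ref{chRco}, and a)$\Rightarrow$b) proceeds by quotienting out the lineality space of the recession cone $\sigma_0=\sum_{a_i=0}\R_{\ge 0}e_i$ of $U$ and then applying Theorem~\ref{SerPoly} to the resulting compact polytope $tU_0$ with the ACC set $\cA\cdot\{1/q\}$. The only variation is cosmetic: to produce a lattice point of $N\cap\Int(tU)$ from one of $N_0\cap\Int(tU_0)$ you add a large multiple of a fixed lattice point of $\relint\sigma_0$, whereas the paper rounds up coordinates of a real lift along $\sigma_0$; you also make explicit two points the paper leaves implicit, namely that the first quotient is nonzero and that $\cA\cdot\{1/q;q\in\Z_{\ge 1}\}$ is again ACC.
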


\begin{proof} $b)\Longrightarrow a)$: since $\Phi$ maps $\Int(tU)$ into $\Int(tU')$, b) implies $N\cap \Int(tU)=\emptyset$, i.e. a) holds.

$a)\Longrightarrow b)$: 
Denote $\sigma_0=\sum_{a_i=0}\R_{\ge 0}e_i$. Let $g\colon N\to N_0:=N/N\cap (\sigma_0-\sigma_0)$ be the projection, let $U_0\subset N_{0,\R}$ 
be the image of $U$. We claim that 
$$
g(N\cap \Int(tU))=N_0\cap \Int(tU_0).
$$
Indeed, the inclusion $\subseteq$ is clear. For the opposite inclusion, let $e_0\in N_0\cap \Int(tU_0)$.
There exist $e\in N$ and $v\in \Int(tU)$ with $g(e)=g(v)=e_0$. Then $e-v=v_2-v_1$ for some $v_1,v_2\in \sigma_0$.
We may write $v_1=\sum_{a_i=0}t_ie_i$ for some $t_i\ge 0$. Then 
$$
e+\sum_{a_i=0}\lceil t_i\rceil e_i = v+(v_2+\sum_{a_i=0}(\lceil t_i\rceil-t_i)e_i)\subset v+\sigma_0.
$$
Since $v+\sigma_0\subset \Int(tU)$, the lattice vector $e+\sum_{a_i=0}\lceil t_i\rceil e_i$ belongs to $\Int(tU)$, and maps into $e_0$.

Our assumption is $N\cap \Int(tU)=\emptyset$. Therefore $N_0\cap \Int(tU_0)=\emptyset$.
The image $U_0=\Conv(\{0\}\cup \{\frac{g(e_i)}{a_i};a_i>0 \})$ is a compact polytope. Note that $a_i>0$ for at least one $i$, since $t>0$.
Suppose $a_i>0$ and $g(e_i)\ne 0$. Then $g(e_i)=qe_0$ for some $q\in \Z_{\ge 1}$ and $e_0\in N_0^{prim}$. Therefore 
$$
\frac{g(e_i)}{a_i}=\frac{e_0}{a_0},\ a_0=\frac{a_i}{q}.
$$
Therefore $\frac{a_0}{t}$ belongs to the ACC set $\cA':=\cA\cdot \{ \frac{1}{q};q\in \Z_{\ge 1} \}$. 

Thus $(N_0,tU_0)$ satisfies the assumptions of Theorem~\ref{SerPoly}.
Therefore there exists a non-zero surjective homomorphism of lattices
$\Phi_0\colon N_0\to N'$ such that the image 
$U'=\Phi_0(U_0)\subset N'_\R$ satisfies the properties in b). The composition $\Phi=\Phi_0\circ g\colon N\to N'$ satisfies b).
\end{proof}

We call $\Phi\colon (N,tU)\to (N',tU')$ as in b) a {\em $t$-lc reduction} of $(X/Y\ni P,B)$. Assuming it exists,  we analyze it.
Note that $\Phi$ must factor through $N_0$, since the image of $U$ is compact. 

We have $0\in \partial U'$. Indeed, we have $0\in U'$. If $0\in \Int U'$, then $0\in N'\cap \Int(tU')$. Contradiction!

Denote $\sigma'=\cup_{t\ge 0}tU'\subset N'_\R$. Then $\pi(|\Delta|)=\sigma'$ and 
$$
{\sigma'}^\vee = \{m'\in M'_\R ; \Phi^*(m')\in |\Delta|^\vee \} 
 = \{m'\in M'_\R; \exists \bar{m}\in \bar{\sigma}^\vee, \Phi^*(m')=\pi^*(\bar{m})\}.
$$
We compute
$$
U'=\Conv(\{0\}\cup \{\frac{\Phi(e_i)}{a_i};\Phi(e_i)\ne 0 \})\subset N'_\R.
$$
Suppose $\Phi(e_i)\ne 0$. In particular, $a_i>0$. Write uniquely $\Phi(e_i)=q_ie'_i$  for some $q_i\in \Z_{\ge 1}$ and $e'_i\in {N'}^{prim}$. Therefore 
$$
\frac{\Phi(e_i)}{a_i}=\frac{e'_i}{a'_i},\ a'_i=\frac{a_i}{q_i}.
$$
Note that $\frac{a'_i}{t}$ belongs to the ACC set $\cA':=\cA\cdot \{ \frac{1}{q};q\in \Z_{\ge 1} \}$. 

\begin{lem}
The set $\{e'_i;\Phi(e_i)\ne 0\}$ is finite up to $\Aut(N')$, $q_i$ is finite, and $\frac{a'_i}{t}$ is bounded away from zero.
\end{lem}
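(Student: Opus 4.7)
The plan is to derive all three bounds directly from the boundedness of $(N', tU')$ combined with the ACC property of $\cA$. Since any ACC subset of $(0,+\infty)$ has a finite supremum, fix $M$ with $a_i/t \le M$ for every $i$. Using characterization (c) in the definition of boundedness, I would fix an isomorphism $N' \cong \Z^{d'}$ in which $tU' \subseteq [-l, l]^{d'}$, where $l$ depends only on $d$ and $\cA$ (the relevant uniform bound is provided by Theorem~\ref{SerPoly}, whose proof controls $\lambda_1(N', tU' - tU')$ via the Pikhurko constant of Theorem~\ref{GenPik}). The inclusion $t e'_i / a'_i = t\Phi(e_i)/a_i \in tU'$ then reads, in this basis, as
$$
\|e'_i\|_\infty \le l\cdot \frac{a'_i}{t}.
$$

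Since $e'_i$ is primitive in $\Z^{d'}$, at least one of its coordinates has absolute value $\ge 1$, so $\|e'_i\|_\infty \ge 1$. Plugging this into the previous inequality gives $a'_i/t \ge 1/l$, which is the third statement. Combining this with $a'_i/t \le a_i/t \le M$ yields $\|e'_i\|_\infty \le lM$, so each $e'_i$ ranges over lattice points in a bounded box, a finite set; this gives the first statement. Finally $q_i = a_i/a'_i = (a_i/t)\cdot (a'_i/t)^{-1} \le Ml$, which gives the second. The only subtle point is tracing the constant $l$ back through Theorem~\ref{SerPoly} to confirm that it depends only on $d$ and $\cA$; everything else is an immediate consequence of the primitivity of $e'_i$ together with the ACC hypothesis, so there is no serious obstacle.
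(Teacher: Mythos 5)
Your argument is correct and is essentially the paper's own proof: choose a basis with $tU'\subseteq[-l,l]^{d'}$ for $l=l(d,\cA)$, note that $(a'_i/t)^{-1}e'_i$ lies in this box, use the ACC (hence bounded-above) hypothesis to bound $\|e'_i\|_\infty$, use primitivity to bound $a'_i/t$ from below, and deduce finiteness of $q_i$. The only cosmetic difference is that you spell out the inequality $\|e'_i\|_\infty\ge 1$ explicitly and bound $a'_i/t$ via $a'_i/t\le a_i/t$, steps the paper leaves implicit.
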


\begin{proof}
After choosing a basis we may suppose $N'=\Z^{d'}$ and $tU'$ is 
contained in $[-l,l]^{d'}$ for some $l=l(d,\cA)$. This box contains $t\frac{\Phi(e_i)}{a_i}=(\frac{a'_i}{t})^{-1}e'_i$.
Since $\frac{a'_i}{t}$ satisfies ACC, it is bounded above. Therefore $e'_i$ belongs to a finite set, and $\frac{a'_i}{t}$ is bounded away from zero. The latter implies that $q_i$ belongs to a finite set.
\end{proof}

Note that every non-zero extremal point of $U'$ is the $\Phi$-image of a non-zero extremal point of $U$, and the latter belongs to the finite set 
$\{\frac{e_i}{a_i};\Phi(e_i)\ne 0\}$. In particular, any non-zero extremal point of $tU'$ is of the form $(\frac{a'_i}{t})^{-1}e'_i$, with the notations above.

\begin{lem}
The pair $(N',\sigma')$ belongs to finitely many isomorphism types.
\end{lem}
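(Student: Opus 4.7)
The plan is to obtain this as a direct corollary of the previous lemma, using the observation that the cone $\sigma'$ is determined by its generating rays, which are exactly the rays through the non-zero vertices of $U'$.

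First, I would note that $d':=\dim N'$ satisfies $1\le d'\le d$, since $\Phi\colon N\to N'$ is a non-zero surjective homomorphism and $\dim N=d$. So it is enough to prove finiteness for each $d'$ separately.

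Next, I would use boundedness of $(N',tU')$: by definition this gives an isomorphism $\Phi'\colon N'\isoto \Z^{d'}$ such that $\Phi'_\R(tU')\subseteq [-l,l]^{d'}$ with $l=l(d,\cA)$ a constant depending only on $d$ and the ACC set $\cA$ (through $\cA'$). After this identification, I may assume $N'=\Z^{d'}$ and $tU'\subset [-l,l]^{d'}$. By the previous lemma, the set of primitive lattice vectors $\{e'_i;\Phi(e_i)\ne 0\}$ is then contained in a fixed finite subset of $\Z^{d'}$ depending only on $d$ and $\cA$.

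The last step is to observe that
\[
\sigma'=\bigcup_{t\ge 0}tU'=\R_{\ge 0}\cdot U'
\]
is the conic hull of $U'$, hence of its vertex set $\{0\}\cup\{\tfrac{\Phi(e_i)}{a_i};\Phi(e_i)\ne 0\}$. Since each non-zero vertex has the form $(a'_i)^{-1}e'_i$ with $a'_i>0$, the cone $\sigma'$ is generated by the primitive vectors $\{e'_i;\Phi(e_i)\ne 0\}$. Thus $\sigma'$ is the conic hull of a subset of a fixed finite set of vectors in $\Z^{d'}$, which leaves only finitely many possibilities for $\sigma'$. Running over the finitely many values of $d'$, the pair $(N',\sigma')$ belongs to finitely many isomorphism types.

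There is no real obstacle here, since all of the work has been done in the previous lemma; the only substantive point to verify is that $\sigma'$ is generated (as a cone) by the $e'_i$ themselves rather than requiring auxiliary rays, and this follows at once from $\sigma'=\R_{\ge 0}\cdot U'$ together with the description of the vertex set of the compact polytope $U'$.
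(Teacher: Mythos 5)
Your proof is correct and follows essentially the same route as the paper's: identify $\sigma'$ as the cone generated by the primitive vectors $e'_i$ (using $\sigma'=\R_{\ge 0}\cdot U'$ and the known vertex description of $U'$), then invoke the previous lemma's finiteness of $\{e'_i\}$ up to $\Aut(N')$. The extra remark about running over the finitely many possible values of $d'$ is implicit in the paper's argument but adds no new content.
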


\begin{proof} We have $\sigma'=\sum_{\Phi(e_i)\ne 0}\R_{\ge 0}e'_i$. 
With respect to some basis of $N^*$, the vectors $e'_i$ have finitely many coefficients. It follows that $(N',\sigma')$ is finite up to $\Aut(N')$.
\end{proof}

\begin{lem}\label{finser} 
In the setting of Theorem~\ref{bndser}, suppose $\cA$ may only accumulate to $0$. Then $(N',tU')$ belongs to finitely many isomorphism types.
\end{lem}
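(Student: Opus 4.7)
The plan is to combine the structural information already extracted about $(N',tU')$ in the two lemmas preceding the statement with the extra hypothesis that $\cA$ accumulates only at $0$. From the previous lemmas we already know three things after an appropriate choice of basis of $N'$: (i) $tU'\subseteq [-l,l]^{d'}$ for some $l=l(d,\cA)$, (ii) the primitive vectors $e'_i$ attached to non-zero vertices of $U'$ belong to the finite set $\Z^{d'}\cap[-l,l]^{d'}$, and (iii) the ratios $a'_i/t$ are bounded above (because the ACC set $\cA'$ is bounded above) and bounded away from $0$.

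The key upgrade comes from the hypothesis on $\cA$. I would first verify that if $\cA\subset(0,+\infty)$ is an ACC set accumulating only at $0$, then the enlarged set $\cA'=\cA\cdot\{\tfrac{1}{q}:q\in\Z_{\ge 1}\}$ also accumulates only at $0$. Since $\cA$ is ACC and accumulates only at $0$, it is bounded above by some $M<\infty$ and $\cA\cap[\epsilon,\infty)$ is finite for every $\epsilon>0$. For any $\epsilon>0$, an element $a/q\in\cA'$ with $a/q\ge\epsilon$ forces $q\le M/\epsilon$ and $a\ge\epsilon$, so $\cA'\cap[\epsilon,\infty)$ is a finite union of finite sets.

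The main point is then immediate: the ratio $a'_i/t$ lies in $\cA'$ and is bounded away from $0$, so by the previous paragraph it belongs to a finite set. Combined with the finiteness of the $e'_i$ from (ii), the non-zero vertices $(a'_i/t)^{-1}e'_i$ of $tU'$ range over a finite set. Since $tU'$ is the convex hull of its finite vertex set (including possibly $0$), and there are only finitely many subsets of a finite set, we conclude that up to $\Aut(N')$ the pair $(N',tU')$ belongs to finitely many isomorphism types. The dimension $d'\le d$ is also bounded, absorbing the choice of ambient lattice.

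There is no real obstacle here; the step that actually uses the hypothesis is the short observation that the class of ACC sets whose only accumulation point is $0$ is closed under multiplication by $\{1/q\}_{q\ge 1}$. This promotes the earlier qualitative bound "$a'_i/t$ bounded away from $0$" into the quantitative statement "$a'_i/t$ lies in a finite set", which is exactly what is needed to turn the bounded-polytope conclusion of Theorem~\ref{bndser} into a finiteness statement.
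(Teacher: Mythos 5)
Your proof is correct and takes essentially the same approach as the paper: both arguments observe that the ratios $a'_i/t$ are bounded away from zero inside an ACC set whose only accumulation point is $0$, hence lie in a finite set, and then combine this with the finiteness (up to $\Aut(N')$) of the primitive vectors $e'_i$ to conclude that $tU'$ has finitely many possibilities. You correctly fill in a small detail the paper leaves implicit (and possibly misstates by writing $\cA$ where $\cA'$ is meant), namely that the enlarged set $\cA'=\cA\cdot\{1/q:q\in\Z_{\ge 1}\}$ is again an ACC set accumulating only at $0$.
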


\begin{proof}
Since $\frac{a'}{t}\in \cA$ is bounded away from zero, it must be finite.
Then the extremal points of $tU'$ belong to a finite set, modulo $\Aut(N')$.
Therefore $tU'$ is finite up to $\Aut(N')$.
\end{proof}

Let $\Phi^*\colon M'\to M$ be the induced injective homomorphism between dual lattices. Define
\begin{align*}
	\square' & = \{m'\in M'_\R; \Phi^*(m')\in \square\} \\
	         & = \{m'\in M'_\R; \langle m',\Phi(e_i)\rangle+a_i\ge 0\ \forall \Phi(e_i)\ne 0 \}.
\end{align*}
Note that $\Phi(e_i)\ne 0$ for some $i$, since $\Phi\ne 0$ and $e_i$ generate $N_\R$.
By construction, $\square'={U'}^*$. By duality, $U'={\square'}^*$, i.e.
$$
U'=\{e'\in N'_\R; -h_{\square'}(e')\le 1 \}=\{e'\in \sigma'; -h_{\square'}(e')\le 1 \}.
$$
For $-h_{\square'}(e')<1$, we have $e'\in \partial U'$ if and only if $e'\in \partial \sigma'$. Therefore 
$$
\Int U'=\{e'\in \Int \sigma'; -h_{\square'}(e')< 1 \}.
$$
Therefore the property $N'\cap \Int(tU')=\emptyset$ is equivalent to 
$$
\min\{-h_{\square'}(e'); e'\in N'\cap \Int\sigma'\}\ge t.
$$

\begin{lem}\label{cbe}
$-\min_{m'\in \square'}\langle \Phi^*(m'),e\rangle \ge t$ for every 
$e\in N\cap \Int |\Delta|$.
\end{lem}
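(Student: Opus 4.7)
The plan is to rewrite the left-hand side as $-h_{\square'}(\Phi(e))$ using duality of the pairing, and then apply the hypothesis $N'\cap \Int(tU')=\emptyset$ after extracting the primitive direction of $\Phi(e)$.

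First I would use the defining property of the dual map, $\langle \Phi^*(m'),e\rangle=\langle m',\Phi(e)\rangle$, to identify
\[
-\min_{m'\in \square'}\langle \Phi^*(m'),e\rangle=-h_{\square'}(\Phi(e)).
\]
Thus the claim reduces to showing $-h_{\square'}(\Phi(e))\ge t$ for every $e\in N\cap \Int|\Delta|$.

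Next I would verify that $\Phi(e)$ is a non-zero lattice point lying in $\Int\sigma'$. That $\Phi(e)\in N'$ is immediate since $\Phi$ is a homomorphism of lattices. Since $\Phi_\R\colon N_\R\to N'_\R$ is surjective, it is an open map, so combined with the identity $\Phi(|\Delta|)=\sigma'$ (which follows from $|\Delta|=\bigcup_{s\ge 0}sU$ and $U'=\Phi(U)$) one gets $\Phi(\Int|\Delta|)\subseteq \Int\sigma'$. The non-vanishing uses that $0\in \partial U'$, which was established just before the lemma: any supporting hyperplane of $U'$ at the origin also supports $\sigma'=\bigcup_{s\ge 0}sU'$ at the origin (scaling preserves a half-space whose boundary contains $0$), hence $0\in \partial\sigma'$ and so $0\notin \Int\sigma'$. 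Writing $\Phi(e)=qe'_0$ with $q\in \Z_{\ge 1}$ and $e'_0\in (N')^{prim}$, I note that $e'_0\in N'\cap \Int\sigma'$ as well, since $\Int\sigma'$ is stable under scaling by positive reals.

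Finally, by the characterization $\Int U'=\{e'\in \Int\sigma';-h_{\square'}(e')<1\}$ recorded in the excerpt, the hypothesis $N'\cap \Int(tU')=\emptyset$ means exactly that $-h_{\square'}(e'_0)\ge t$ for every non-zero $e'_0\in N'\cap \Int\sigma'$. By positive homogeneity of $-h_{\square'}$, I then obtain
\[
-h_{\square'}(\Phi(e))=q\cdot(-h_{\square'}(e'_0))\ge qt\ge t,
\]
which is the desired inequality. The only delicate step is the non-vanishing $\Phi(e)\ne 0$, where one must invoke that $\sigma'$ is a proper cone (equivalently, $0\in \partial U'$) rather than merely that $\Phi$ is surjective.
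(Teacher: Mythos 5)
Your argument is correct and follows the same route as the paper's one-line proof: the adjunction identity $\langle \Phi^*(m'),e\rangle=\langle m',\Phi(e)\rangle$, the openness of $\Phi_\R$ giving $\Phi(N\cap \Int|\Delta|)\subseteq N'\cap\Int\sigma'$, and the reformulation of $N'\cap\Int(tU')=\emptyset$ as $-h_{\square'}(e')\ge t$ for all $e'\in N'\cap\Int\sigma'$. The only superfluous step is extracting the primitive direction $e'_0$ of $\Phi(e)$, since that reformulation already applies to every lattice point of $\Int\sigma'$ and can be invoked directly for $\Phi(e)$ (nonvanishing being automatic because $0\notin\Int\sigma'$).
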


\begin{proof}
Since $\langle \Phi^*(m'),e\rangle=\langle m',\Phi(e)\rangle$ and $\Phi(N\cap \Int|\Delta|)\subset N'\cap \Int\sigma'$.
\end{proof}

\begin{thm}\label{bndmult}
Let $f\colon (X,B=\sum_i(1-a_i)E_i)\to Y\ni P$ be the germ of a toric fibration near an invariant point. Suppose $\dim Y>0$, $-K-B$ is $f$-semiample, $\mld_{f^{-1}P}(X,B)\ge t>0$ and $\frac{a_i}{t}$ belong to an ACC set $\cA$. Then there exists an invariant hyperplane section
$H\in |\fm_{Y,P}|$ such that $\mld(X,B+\gamma f^*H)\ge 0$ for some
$\gamma>0$ with $\frac{\gamma}{t}$ bounded away from zero.
\end{thm}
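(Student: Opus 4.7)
The plan is to extract the hyperplane section $H$ from the $t$-lc reduction $\Phi\colon(N,tU)\to(N',tU')$ furnished by Theorem~\ref{bndser}. The hypotheses ($-K-B$ is $f$-semiample, $\mld_{f^{-1}P}(X,B)\ge t>0$, and $\dim Y>0$, which rules out the special case) imply $(C_t)$ via Lemma~\ref{chRco}.1), so Theorem~\ref{bndser} yields such a $\Phi$; I will fix a basis of $N'\cong\Z^{d'}$ witnessing the boundedness of $(N',tU')$, so that $tU'\subseteq\prod_j[x_j,x_j+l]$ for some $l=l(d,\cA)$.

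The key algebraic input is the duality identity
\[
{\sigma'}^\vee=\{m'\in M'_\R:\exists\,\bar m\in\bar\sigma^\vee,\ \Phi^*(m')=\pi^*(\bar m)\}
\]
recorded just after Theorem~\ref{bndser}. Combined with the injectivity of $\Phi^*$ and the saturation of $\pi^*(\bar M)$ in $M$, this gives a well-defined injective semigroup map $\Psi\colon{\sigma'}^\vee\cap M'\to\bar\sigma^\vee\cap\bar M$. I will pick a primitive generator $m'_0\in{\sigma'}^\vee\cap M'\setminus\{0\}$ of a ray of ${\sigma'}^\vee$, set $\bar m:=\Psi(m'_0)\in\bar\sigma^\vee\cap\bar M\setminus\{0\}$, take $H:=(\chi^{\bar m})\in|\fm_{Y,P}|$, and choose $\gamma:=t\big/\max_{v'\in tU'}\langle m'_0,v'\rangle$, which is strictly positive because $m'_0$ is positive on the nonempty open set $\Int(tU')$.

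The log canonical check then reduces to a direct computation. Using $\pi^*(\bar m)=\Phi^*(m'_0)$ gives $\langle\bar m,\pi(e_i)\rangle=\langle m'_0,\Phi(e_i)\rangle$ for every $i\in\Delta(1)$; when $\Phi(e_i)\ne 0$ the point $t\Phi(e_i)/a_i$ is a vertex of $tU'$, so that $\gamma\langle\bar m,\pi(e_i)\rangle\le a_i$, and when $\Phi(e_i)=0$ the inequality is trivial. Hence every coefficient of $B+\gamma f^*H$ lies in $[0,1]$; since $-K-B$ is $f$-semiample, $K+B+\gamma f^*H$ is $\R$-Cartier, so the toric log canonical criterion (coefficients $\le 1$ plus $\R$-Cartier) gives $\mld(X,B+\gamma f^*H)\ge 0$.

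The main obstacle, and the quantitative heart of the theorem, is the uniform bound $\max_{tU'}\langle m'_0,\cdot\rangle\le C(d,\cA)$, which in turn yields $\gamma/t\ge 1/C$. The argument is combinatorial: each nonzero vertex of $tU'$ equals $\tilde e_i/\bar\alpha_i$ with $\tilde e_i\in{N'}^{prim}$ and $\bar\alpha_i\in\cA':=\cA\cdot\{1/n:n\in\Z_{\ge 1}\}$, which is still ACC with $\sup\cA'=\sup\cA<\infty$. Since the vertex lies in $\prod_j[x_j,x_j+l]$ and $\bar\alpha_i$ is bounded above, each $\tilde e_i$ lies in a bounded box of $N'$; consequently $\sigma'$ belongs to finitely many isomorphism types of cones, its dual has only finitely many possible primitive ray generators, and each has bounded coordinates in the dual basis. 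Taking the resulting uniform bound $C(d,\cA)$ gives $\gamma/t\ge 1/C>0$ and completes the plan.
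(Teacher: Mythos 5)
Your proposal is correct and follows essentially the same route as the paper's proof: obtain the $t$-lc reduction $\Phi\colon(N,tU)\to(N',tU')$ from Theorem~\ref{bndser}, pick a primitive ray generator $m'$ of ${\sigma'}^\vee$, lift $\Phi^*(m')$ to a unique $\bar m\in\bar M\cap\bar\sigma^\vee$ via $\pi^*$, set $H=(\chi^{\bar m})$ and $\gamma=t/\max\langle m',tU'\rangle$, and bound $\max\langle m',tU'\rangle$ uniformly using the boundedness of $(N',tU')$ together with the ACC on the $a'_i/t$. The only slips are cosmetic (the points $t\Phi(e_i)/a_i$ need not all be vertices of $tU'$, though they do lie in $tU'$, which is all the inequality requires), and your saturation argument for the lift of $\bar m$ makes explicit what the paper leaves implicit.
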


\begin{proof}
Let $\Phi\colon (N,tU)\to (N',tU')$ be a $t$-lc reduction from Theorem~\ref{bndser}. Choose $m'\in {\sigma'}^\vee(1)$. Then $\max\langle m',tU'\rangle$ is a positive real number, since $U'$ generates $\sigma'$ and $m'\in {\sigma'}^\vee$ is not zero. The maximum is bounded above, since in some basis of $N'$ both $tU'$ and ${\sigma'}^\vee(1)$ (with respect to dual basis) are contained in a bounded box.

There exists a unique $0\ne \bar{m}\in \bar{M}\cap \bar{\sigma}^\vee$
such that $\pi^*(\bar{m})=\Phi^*(m')$. Let $H:=(\chi^{\bar{m}})\in |\fm_{Y,P}|$. Then $K+B+\gamma f^*H$ has log canonical singularities if and only if $\gamma\langle \pi^*(\bar{m}),e_i\rangle\le a_i$ for all $i$,
if and only if $\gamma\langle \Phi^*(m'),e_i\rangle\le a_i$ for all $i$,
if and only if $t/\gamma\ge \max\langle m',tU'\rangle$. Therefore the maximal value for $\gamma$ is 
$$
\gamma=\frac{t}{\max\langle m',tU'\rangle}.
$$
From above, $\frac{\gamma}{t}$ is bounded away from zero.

Note that we can find $p$ linearly independent such hyperplane sections,
where $p=\dim N'$.
\end{proof}

In particular, if $\dim Y=1$ and $t$ is bounded away from zero,
then the coefficients of the special fiber $f^*P$ are bounded
above (cf.~\cite[Corollary 1.5]{BC21}).


\section{Complements of bounded index}


Let $r$ be a positive integer. Denote $\cA_r=\{\frac{x}{q};
x\in [0,1],rx\in \Z,q\in \Z_{\ge 1}\}$. The set $\{1-a;a\in \cA_r\}$ is called the set of {\em $r$-hyperstandard coefficients}.

\begin{thm}\label{bndscoP}
Let $f\colon X\to Y$ be a toric fibration, with $\dim X=d$ and $Y$ affine with (unique) invariant point $P$. Suppose $\dim Y>0$.
Let $0\le B\le \Sigma_X$ have
$r$-hyperstandard coefficients. Let $t>0$. The following are equivalent:
\begin{itemize}
	\item[1)] There exists an open neighborhood $P\in V\subseteq Y$ and 
	$B^+_{f^{-1}V}\ge B|_{f^{-1}V}$ on $f^{-1}V$ such that $K_{f^{-1}V}+ B^+_{f^{-1}V}\sim_\R 0$ and $\mld_{f^{-1}P}(f^{-1}V,B^+_{f^{-1}V})\ge t$.
	\item[2)] There exists $B^+\ge B$ such that $n(K+B^+)\sim 0$ and 
	$\mld_{f^{-1}P}(X,B^+)\ge t$, where $n$ is a positive integer which depends only on $d,r,t$.
\end{itemize}
\end{thm}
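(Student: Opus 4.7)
Direction $2)\Rightarrow 1)$ is immediate: take $V=Y$ and note $n(K+B^+)\sim 0\Rightarrow K+B^+\sim_\R 0$, while the mld bound transfers unchanged.

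For $1)\Rightarrow 2)$, my plan is to reformulate the existence of $B^+$ combinatorially, apply Theorem~\ref{bndser} to obtain a $t$-lc reduction, and use the finiteness from Lemma~\ref{finser} to extract a uniform bound on the index. By the remark following Lemma~\ref{chRco}, condition 1) yields property $(C_t)$ over all of $Y$; by Lemma~\ref{chRco}.2) and 3) I may assume $-K-B$ is $f$-semiample with a rational solution available. Producing $B^+\ge B$ with $n(K+B^+)\sim 0$ and $\mld_{f^{-1}P}(X,B^+)\ge t$ then reduces to finding $m\in \frac{1}{n}M$ satisfying $0\le\langle m,e_i\rangle\le a_i$ for every $i$ and $\langle m,e\rangle\ge t$ for every $e\in N^{prim}\cap \Int|\Delta|$; setting $a_i^+:=\langle m,e_i\rangle$ gives the desired complement via $-K-B^+=(\chi^m)$.

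Next I apply Theorem~\ref{bndser} with $\cA:=t^{-1}\cA_r$, which is ACC since $\cA_r\subset [0,1]$ accumulates only at $0$. This produces a $t$-lc reduction $\Phi\colon (N,tU)\to (N',tU')$. The accumulation property passes to $\cA':=\cA\cdot\{1/q;q\in \Z_{\ge 1}\}$, so Lemma~\ref{finser} ensures $(N',tU')$ belongs to only finitely many isomorphism classes, parametrized by $d,r,t$. Consequently the vertices of the dual polytope $\square'=(U')^*$ lie in $\frac{1}{n_0}M'$ for some $n_0=n_0(d,r,t)$. Moreover, from $N'\cap \Int(tU')=\emptyset$ and Lemma~\ref{cbe}, for each $e\in N\cap \Int|\Delta|$ there is some $m'_e\in \square'$ with $-\langle \Phi^*(m'_e),e\rangle\ge t$.

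The hard part will be to amalgamate these pointwise witnesses into a single $m\in \frac{1}{n}M$ with $n=n(d,r,t)$. The natural candidate is $m=-\Phi^*(m'_0)$ for a uniform choice $m'_0\in \frac{1}{n_0}M'\cap \square'$ that works on all of $\Phi(N\cap \Int|\Delta|)$ simultaneously; the finite classification reduces this to a finite case-check over isomorphism types. When $\Ker\Phi$ meets $\Int|\Delta|$ one must also add a bounded-denominator correction along fiber directions $(\pi^*\bar M)^\perp\cap M$ to enforce the lower bound on those rays. Uniformity of $n$ across all the finitely many possible targets $(N',tU')$ is where the quantitative control is extracted, and this passage from pointwise to uniform is the main technical obstacle of the proof.
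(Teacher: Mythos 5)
Your setup through the application of Theorem~\ref{bndser} and Lemma~\ref{finser} agrees with the paper, and your reduction to the semiample rational case via Lemma~\ref{chRco} is correct. The gap is in your reformulation: you insist on finding a \emph{single} $m\in\frac{1}{n}M$ with $-K-B^+=(\chi^m)$, i.e.\ a torus-invariant complement determined by one character. That is strictly more than the theorem demands, and nothing in your argument shows such an $m$ exists. Indeed, if $\square'$ has more than one vertex --- which happens whenever $\sigma'=\cup_{s\ge0}sU'$ is not a strictly convex cone, the generic situation for fibration germs --- then $h_{\square'}(\cdot)$ is genuinely piecewise-linear on $\sigma'$, and the minimizer over $\square'$ changes from face to face, so a ``uniform witness'' $m'_0\in\square'$ with $\langle m'_0,e'\rangle\le -t$ for all $e'\in N'\cap\Int\sigma'$ is not obviously available. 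You flag this yourself (``this passage from pointwise to uniform is the main technical obstacle'') but do not close it; neither the finiteness of targets $(N',tU')$ nor a case-check supplies it, since the question is already nontrivial for a \emph{fixed} target.

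The paper sidesteps this entirely. It takes $A'\subset M'$ to be the set of vertices of $n\square'$ (finite, integral, with $n=n(d,r,t)$ by Lemma~\ref{finser}), forms the invariant linear system $\Lambda\subset|-nK-nB|$ spanned by the characters $\chi^{\Phi^*(a')}$, and sets $B^+=B+\frac1n D$ for a \emph{general} member $D\in\Lambda$. Generality makes the log discrepancy of any toric valuation $E_e$ equal to $-\frac1n\min_{a'\in A'}\langle\Phi^*(a'),e\rangle=-h_{\square'}(\Phi(e))$, which is exactly the minimum over the whole polytope $\square'$. Lemma~\ref{cbe} then yields $-h_{\square'}(\Phi(e))\ge t$ for all $e\in N\cap\Int|\Delta|$, with no need to pick a single $m'_0$. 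In short, the ``amalgamation'' you are worried about is performed automatically by taking a generic linear combination of characters, which realizes the min-function $h_{\square'}$ rather than any single linear functional.

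Two smaller remarks. First, your worry about ``$\Ker\Phi$ meeting $\Int|\Delta|$'' does not arise: $\Phi\colon N_\R\to N'_\R$ is a surjective linear map, hence open, so $\Phi(\Int|\Delta|)\subset\Int\sigma'$; since $0\in\partial\sigma'$ (as $0\in\partial U'$), we get $\Ker\Phi\cap\Int|\Delta|=\emptyset$, and no extra correction in fiber directions is needed. Second, the constraint $0\le\langle m,e_i\rangle$ in your formulation is another symptom of the over-restriction: it forces $m\in|\Delta|^\vee$, which together with $-m\in\square$ pins $m$ near $0$ and makes the requirement $\langle m,e\rangle\ge t$ hard to meet in general; the paper's $B^+$ simply is not of this rigid invariant form.
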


\begin{proof} $1)\Longrightarrow 2)$:
Let $f\colon T_N\emb(\Delta)\to T_{\bar{N}}\emb(\bar{\sigma})$ be our morphism. We apply Theorem~\ref{bndser} with the ACC set $\cA=t^{-1}\cA_r$.
Let $\Phi\colon (N,tU)\to (N',tU')$ be the induced non-zero projection.
Since $0$ is the only cluster point of $\cA$, Lemma~\ref{finser} shows that $(N',tU')$ is finite up to isomorphism. Since $t$ is fixed,
$(N',U')$ is finite up to isomorphism. Therefore $(M',\square')$ is finite up to isomorphism. Since $U'$ has rational extremal points, so does $\square'$. Therefore there exists $n=n(d,r,t)\ge 1$ such that the polyhedral convex set $\square'\subset M'_\R$ has extremal points contained in $\frac{1}{n}M'$. Let $A'\subset M'$ be the finite set of extremal points of $n\square'$.

We have $\Phi^*(A')\subset M\cap n\square$. The characters $\{ \chi^{\Phi^*(a')};a'\in A'\}\subset \Gamma(X,-nK-nB)$
define an invariant linear system $\Lambda\subset |-nK-nB|$. Let $D\in |-nK-nB|$ be a general member. We have $n(K+B+\frac{1}{n}D)\sim 0$.
For a toric valuation $E_e$ over $X$ induced by some $e\in N^{prim}\cap |\Delta|$, we compute the log discrepancy
$$
a_{E_e}(X,B+\frac{1}{n}D)=-\frac{1}{n}\min_{a'\in A'}\langle \Phi^*(a'),e \rangle.
$$
Now $\langle \Phi^*(a'),e \rangle=\langle a',\Phi(e) \rangle$. Since
$\frac{a'}{n}$ are the extremal points of $\square'$, we obtain
$$
a_{E_e}(X,B+\frac{1}{n}D)=-h_{\square'}(\Phi(e)).
$$

Suppose $e\in N^{prim}\cap \Int|\Delta|$. By Lemma~\ref{cbe}, 
$-h_{\square'}(\Phi(e))\ge t$. Therefore $a_{E_e}(X,B+\frac{1}{n}D)\ge t$.
Since $(X,B+\frac{1}{n}D)$ admits a toric log resolution, we conclude 
that $\mld_{f^{-1}P}(X,B+\frac{1}{n}D)\ge t$.
\end{proof}

\begin{thm}\label{bndscogl}
	Let $f\colon X\to Y$ be a toric fibration, with $\dim X=d$ and $Y$ affine. Let $0\le B\le \Sigma_X$ have
	$r$-hyperstandard coefficients. Let $t\in (0,1)$. The following are equivalent:
	\begin{itemize}
		\item[1)] For every closed point $y\in Y$, there exists an open neighborhood $y\in V\subseteq Y$ and 
		$B^+_{f^{-1}V}\ge B|_{f^{-1}V}$ on $f^{-1}V$ such that $K_{f^{-1}V}+ B^+_{f^{-1}V}\sim_\R 0$ and $\mld(f^{-1}V,B^+_{f^{-1}V})\ge t$.
		\item[2)] There exists $B^+\ge B$ such that $n(K+B^+)\sim 0$ and 
		$\mld(X,B^+)\ge t$, where $n$ is a positive integer which depends only on $d,r,t$.
	\end{itemize}
\end{thm}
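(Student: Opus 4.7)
The direction $(2) \Rightarrow (1)$ is immediate on setting $V = Y$ and $B^+_{f^{-1}V} = B^+$, since the global mld bound restricts automatically. For the direction $(1) \Rightarrow (2)$, my plan is to combine Theorem~\ref{bndscoP} with a refinement that handles the boundary valuations.

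First I would reduce to the case where $Y$ has a unique invariant closed point $P$. If $\bar\sigma - \bar\sigma \subsetneq \bar N_\R$, the standard product decomposition gives $Y \cong Y_0 \times T$ with $T$ a torus and $Y_0$ affine toric with a unique invariant closed point; correspondingly $X \cong X_0 \times T$ and $B = B_0 \boxplus 0$, so conditions $(1)$ and $(2)$ for $f$ reduce to the analogous statements for $f_0\colon X_0 \to Y_0$. When $\dim Y = 0$, condition $(1)$ supplies a global $\R$-complement on the proper toric variety $X$ with $\mld \ge t$, and since $t < 1$ Theorem~\ref{toricBAB} bounds $X$ in a finite family; a bounded integer $n = n(d,r,t)$ is then extracted from the rational structure of the resulting boundary polytope. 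So I may assume $\dim Y > 0$, and by Lemma~\ref{chRco}.2 I reduce to the case $-K-B$ is $f$-semiample.

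Since $P$ is the unique closed orbit of $Y$, every $T$-invariant orbit closure contains $P$ and meets any open neighborhood $V \ni P$. Therefore every toric valuation $E_e$ of $X$, $e \in N^{prim} \cap |\Delta|$, has center whose image in $Y$ meets $V$, and condition $(1)$ applied at $y = P$ gives $\mld(X,B) \ge t$ globally, hence in particular $\mld_{f^{-1}P}(X,B) \ge t$. Theorem~\ref{bndscoP} now yields a complement $B_0^+ = B + \frac{1}{n}D_0$ with $n = n(d,r,t)$ and $\mld_{f^{-1}P}(X, B_0^+) \ge t$, where $D_0$ is a general member of the linear system $\Lambda \subset |-nK-nB|$ spanned by the characters $\{\chi^{\Phi^*(a')} : a' \in A'\}$ with $A'$ the extremal points of $n\square'$. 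By the computation of Theorem~\ref{bndscoP}, $a_{E_e}(X, B_0^+) = -h_{\square'}(\Phi(e))$; this is $\ge t$ for $e \in \Int|\Delta|$ by $N' \cap \Int(tU') = \emptyset$, and for $e = e_i \in \Delta(1)$ with $\Phi(e_i) \ne 0$ it equals $q_i a'_i = a_i \ge t$.

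The main obstacle, and final step, is to upgrade $B_0^+$ so that $\mld(X, B^+) \ge t$ holds globally. Two issues must be addressed: rays $e_i \in \ker\Phi$ (which give log discrepancy $0$), and non-ray boundary valuations $e \in N^{prim} \cap \partial|\Delta|$ whose image $\Phi(e) \in \partial \sigma'$ may yield $-h_{\square'}(\Phi(e)) < t$. My plan is to enlarge $\Lambda$ by adjoining characters arising from $t$-lc reductions of the subfibrations $f^{-1}(V(\bar\tau)) \to V(\bar\tau)$ for each proper face $\bar\tau \subsetneq \bar\sigma$. Each such subfibration has strictly smaller-dimensional base, so by induction on $\dim Y$ (equivalently on the codimension of $\bar\tau$) it admits a $t$-lc reduction $\Phi_{\bar\tau}$ producing a finite character set $\Phi^*_{\bar\tau}(A'_{\bar\tau}) \subset n\square \cap M$; Lemma~\ref{finser}, applicable because $t^{-1}\cA_r$ has only $0$ as cluster point, ensures uniform boundedness of the combinatorial data and preserves the denominator $n = n(d,r,t)$. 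A general member $D$ of the enlarged linear system then satisfies $a_{E_e}(X, B + \frac{1}{n}D) = -\frac{1}{n}\min_{m \in A}\langle m, e\rangle \ge t$ at every toric valuation: interior valuations by the original $\Phi$-characters, and boundary valuations lying in a face $\bar\tau$ by the $\Phi_{\bar\tau}$-characters. Verifying this combined combinatorial bound and the uniformity of $n$ is the central technical difficulty.
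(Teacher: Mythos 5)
Your direction $(2)\Rightarrow(1)$ is fine, and the first two reductions — splitting off a torus factor to arrange a unique invariant closed point $P$, and invoking Theorem~\ref{toricBAB} when $\dim Y=0$ — are exactly what the paper does. You also correctly observe that, because $P$ lies in every orbit closure, condition $(1)$ applied at $y=P$ already forces $\mld(X,B)\ge t$ globally. But from there your plan diverges from the paper's in a way that leaves a genuine gap.

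The paper does \emph{not} start from the small linear system of Theorem~\ref{bndscoP} and try to enlarge it. Instead it takes $A\subset M\cap n\square$ to be any finite set whose characters generate $\Gamma(-nK-nB)$ as a $\Gamma(\cO_Y)$-module (equivalently $M\cap n\square=\bigcup_{a\in A}\bigl(a+\pi^*(\bar M\cap\bar\sigma^\vee)\bigr)$), and shows that a general member $D$ of the resulting linear system satisfies
$a_{E_e}(X,B+\tfrac{1}{n}D)=-\tfrac{1}{n}\min\langle M\cap n\square,e\rangle$
for every toric valuation $E_e$, $e\in N^{prim}\cap|\Delta|$. This identity holds for all $e$ simultaneously, so the remaining problem is purely to bound $n$; and because $\mld(X,B_n)\ge t$ implies $\mld(X,B_{n+1})\ge t$, one only needs a bounded $n$ that works for each $e$ separately. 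The paper then partitions the valuations: those with $\pi(e)\in\Int\bar\sigma$ are handled by Theorem~\ref{bndscoP}; those with $\pi(e)\in\relint\bar\tau$ for a proper face $\bar\tau\prec\bar\sigma$ are handled by restricting to the \emph{principal open} subset $Y^0=U_{\bar\tau}=D(\chi^{\bar\varphi})$, $\bar\varphi\in\bar M\cap\relint(\bar\sigma^\vee\cap\bar\tau^\perp)$, where the crucial point is that $\langle\bar\varphi,\pi(e)\rangle=0$ forces $\min\langle M\cap n\square,e\rangle=\min\langle M\cap n\square^0,e\rangle$, hence $a_{E_e}(X,B_n)=a_{E_e}(X^0,B^0_n)$. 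Since $Y^0$ has no closed invariant point, the product-decomposition step (your first reduction) drops $\dim X$, and induction on $d$ gives a bound $n(d-1,r,t)$.

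Your proposal instead invokes ``$t$-lc reductions of the subfibrations $f^{-1}(V(\bar\tau))\to V(\bar\tau)$'' over the orbit \emph{closures} $V(\bar\tau)\subset Y$. That is the wrong object: the orbit closure is a closed subvariety, $(X,B)$ does not restrict to a pair on $f^{-1}(V(\bar\tau))$ without adjunction, and there is no reason characters coming from a $t$-lc reduction of that closed subfibration would lie in $M\cap n\square$ or control the log discrepancy $a_{E_e}(X,B+\tfrac{1}{n}D)$ of the \emph{original} pair. The fix is the one above — use the open subsets $U_{\bar\tau}$ and the identity of $h_\square$ along $\bar\tau^\perp$, not closed subvarieties. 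There is also a smaller issue: you invoke Lemma~\ref{chRco}.2 to reduce to $-K-B$ $f$-semiample, but that lemma transfers the \emph{local} condition $(C_t)$ (mld over $f^{-1}P$), and the pushforward/b-divisor argument there does not obviously control the global $\mld$ across a non-small MMP run. The paper sidesteps this entirely by never reducing to the semiample case in this proof: the formula $a_{E_e}(X,B_n)=-\tfrac{1}{n}\min\langle M\cap n\square,e\rangle$ holds once $n(K+B_n)\sim 0$ and $D$ is general, with no semiampleness hypothesis on $-K-B$ itself.
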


\begin{proof} $1)\Longrightarrow 2)$: 
 We have $\Gamma(-nK-nB)=\oplus_{m\in M\cap n\square}k\cdot \chi^m$. Let $A\subset M\cap n\square$ be a finite set such that $\{\chi^a;a\in A \}$ generate $\Gamma(-nK-nB)$ as a $\Gamma(\cO_Y)$-module (equivalently, $M\cap n\square=\cup_{a\in A} a+\pi^*(\bar{M}\cap \bar{\sigma}^\vee))$. The invariant subspace $\oplus_{a\in A}k \cdot \chi^a\subset \Gamma(-nK-nB)$ defines an invariant finite dimensional linear system $\Lambda\subset |-nK-nB|$. Let $D\in \Lambda$ be a general member, denote $B_n=B+\frac{1}{n}D$. By construction,
$B_n\ge B$ and $n(K+B_n)\sim 0$. For a toric valuation $E_e$ of $X$ induced by some 
$e\in N^{prim}\cap |\Delta|$, we compute the log discrepancy
\begin{align*}
a_{E_e}(X,B_n) & = -\frac{1}{n}\min\langle A,e\rangle \\
               & = -\frac{1}{n}\min\langle M\cap n\square,e\rangle.
\end{align*}
Since $(X,B_n)$ admits a toric resolution, we obtain that $\mld(X,B_n)\ge t$ if and only if $\frac{1}{n}\le 1-t$ and 
$-\frac{1}{n}\min\langle M\cap n\square,e\rangle\ge t$ for all $e\in N^{prim}\cap |\Delta|$. We abused notation since $B_n$ does depend on the choice of $A$, but we are only interested when $\mld(X,B_n)\ge t$, which does not depend on choices.

From $0\in \square$ we obtain $M\cap n\square\subset M\cap (n+1)\square$. Thus $\mld(X,B_n)\ge t$ implies $\mld(X,B_{n+1})\ge t$.
Property $\mld(X,B_n)\ge t$ holds for $n\gg 0$, since $\square$ has rational extremal points and $-h_\square(e)\ge t$ for all $e\in N^{prim}\cap |\Delta|$. The latter property is equivalent to 1), by the argument of Lemma~\ref{chRco}. 

a) If $Y$ does not have a closed invariant point, we may find $n=n(d-1,r,t)$ in 2). Indeed, suppose the unique closed orbit of $Y$ has positive dimension. Let $\bar{N}'=\bar{N}\cap (\bar\sigma-\bar\sigma)$.
The short exact sequence $0\to \bar{N}'\to \bar{N} \to Q\to 0$ splits.
Therefore $Y\simeq Y'\times T''$, where $T''=T_Q$ and $Y'$ is the affine $T_{\bar{N}'}$ embedding corresponding to $\bar\sigma$ viewed inside $\bar{N}'$. Note that $Y'$ admits an invariant closed point.
Let $N'=\pi^{-1}(\bar{N}')\subset N$. Let $X'$ be the equivariant  $T_{N'}$-embedding corresponding to the cones of $\Delta$ viewed as cones inside $N'$. We obtain a commutative diagram
\[ 
\xymatrix{
	  X \ar[d]_f \ar[rr]^\simeq  & &   X'\times T'' \ar[d]^{f'\times \id} \\
	  Y \ar[rr]^\simeq   & &   Y'\times T''    
}
\]
where $f'\colon X'\to Y'$ is a proper toric morphism, $Y'$ is affine with an invariant closed point, $K_X$ corresponds to $K_{X'}\times T''$
and $B$ corresponds to $B'\times T''$. We have $N\simeq N'\times Q$ and 
$N\cap |\Delta_X|$ corresponds to $(N'\cap |\Delta_{X'}|,0)$. Dually,
$M\simeq M'\times Q^*$ and $\square$ corresponds to $\square'\times Q^*$.
We deduce that 1) holds for $f'$. We have $\dim X'=\dim X-\dim T''<\dim X$.
By induction, $\mld(X',B'_n)\ge t$ for some $n=n(d-1,r,t)$. 
We obtain $\mld(X,B_n)\ge t$.

Therefore we may suppose $Y$ has an invariant closed point $P$.
	
b) Case $Y=P$. By Theorem~\ref{toricBAB}, the isomorphism class of $X$ is finite. Since the coefficients of $B$ may only cluster to $1$, they are finite. Therefore the pair $(M,\square)$ is finite up to isomorphism.
Then the compact polytope $n\square$ has extremal points in the lattice, for some $n=n(d,r,t)$. Let $D\in |-nK-nB|$ be a general member.
Then $n(K+B+\frac{1}{n}D)\sim 0$ and $\mld(X,B+\frac{1}{n}D)\ge t$.

c) Case $Y\ne P$. We claim that $K+B_n$ is $t$-lc over $X\setminus f^{-1}P$ for some $n=n(d-1,r,t)$. Indeed, suppose $e\in N^{prim}\cap |\Delta|$ induces a toric valuation $E_e$ of $X$ such that
$f(c_X(E_e))\ne P$ and $a_{E_e}(X,B_n)<t$. We have $\pi(e)\in \relint \bar{\tau}$ for some proper face $\bar{\tau} \prec \bar{\sigma}$. Let $Y^0$ be the invariant affine open subset $U_{\bar{\tau}}\subsetneq Y$. Let $X^0=f^{-1}(Y^0)$ and $f^0\colon X^0\to Y^0$ the induced toric proper morphism. The unique closed orbit of $Y^0$ is $O_{\bar{\tau}}$, of positive dimension. Note that $E_e$ is also a toric valuation over $X^0$.
Choose $\bar\varphi\in \bar{M} \cap \relint(\bar{\sigma}\cap \bar{\tau}^\perp)$. Then $Y^0$ coincides with the principal open set
$D(\chi^{\bar\varphi})\subset Y$. Therefore $m\in M$ belongs to 
$n\square^0=\square_{-nK_{X^0}-nB^0}$ if and only if 
$m+l\pi^*\bar\varphi$ belongs to $n\square=\square_{-nK-nB}$ for $l\gg 0$.
Since $\pi(e)\in \bar{\tau}$ we have $\langle \varphi,\pi(e)\rangle=0$,
hence $\langle m+l\pi^*\varphi,e\rangle=\langle m,e\rangle$ for all $l$. We deduce that $a_{E_e}(X,B_n)=a_{E_e}(X^0,B^0_n)$.
Therefore $a_{E_e}(X^0,B^0_n)<t$.
But $f^0$ is a toric fibration whose base does not admit an invariant point. By a), $n<n(d-1,r,t)$. This proves the claim.

By Theorem~\ref{bndscoP}, there exists $n_P=n(d,r,t)$ such that 
$\mld_{f^{-1}P}(X,B_{n_P})\ge t$. A valuation of $X$ is either defined over 
$X\setminus f^{-1}P$, or its center on $X$ is contained in $f^{-1}P$.
Therefore $\mld(X,B_{n'})\ge t$ for $n'\ge \max(n,n_P)$.
\end{proof}

\begin{exmp} If $d=1$, Theorems~\ref{bndscoP} and ~\ref{bndscogl} apply to only two possible cases:
	\begin{itemize}
		\item $(X,B)=(\bA^1,b\cdot 0)$ with $b=1-\frac{x}{q}$, $f=\id_{\bA^1}$ and $P=0$. The condition $\mld_P(X,B)\ge t$, i.e. $b\le 1-t$, implies $q\le t^{-1}$.
		Take $n=rq\le r\lfloor t^{-1}\rfloor$. Then $n(K+B)\sim 0$.
		\item $(X,B)=(\bP^1,b_0\cdot 0+b_\infty \cdot \infty)$ with $b_0=1-\frac{x_0}{q_0}$, $b_\infty=1-\frac{x_\infty}{q_\infty}$,
		and $f$ is the constant map to a point $P$. The condition $\mld(X,B)\ge t$ implies $q_0,q_\infty\le \lfloor t^{-1}\rfloor$. Let $l=\lcm(q_0,q_\infty)\le \lfloor t^{-1}\rfloor^2$. Let $n$ be a multiple of $rl$.
		Then $\Bs|-nK-nB|=\emptyset$. If moreover
		$n\ge (1-t)^{-1}$, the general member $D\in |-nK-nB|$ satisfies 
		$\mld(X,B+\frac{1}{n}D)\ge t$.  
	\end{itemize}	
\end{exmp}


\section{Germs of toric singularities}


Let $P\in (X,B)$ be an affine toric log variety with invariant point $P$.
That is $X=T_N\emb(\sigma)$ is an affine equivariant $T_N$-embedding,
$\sigma(1)=\{e_i;i\}$ generates $N_\R$, $B=\sum_i(1-a_i)E_i$ with $a_i\in [0,1]$ and $K+B=-\sum_i a_iE_i$ is $\R$-Cartier. The latter property is equivalent to the existence of $\psi\in M_\R$ such that 
$(\chi^\psi)+K+B=0$, i.e. $\langle \psi,e_i\rangle=a_i$ for all $i$.
The linear form $\psi\in M_\R$ is unique, called the {\em log discrepancy function} of $(X,B)$, due to the following property: if $E_e$ is the toric valuation of $X$ corresponding to some $e\in N^{prim}\cap \sigma$, the log discrepancy of $(X,B)$ in $E_e$ is 
$$
a_{E_e}(X,B)=\langle \psi,e\rangle.
$$

Since $\langle \psi,e_i\rangle=a_i\ge 0$ for all $i$, we have $\psi\in \sigma^\vee$ (i.e. $\mld(X,B)\ge 0$). 

The moment polytope of $-K-B$ is 
$
\square_{-K-B}=-\psi+\sigma^\vee\subset M_\R.
$ 
Its polar set $U=\square^*\subset N_\R$ takes the explicit form
$$
U=\{e\in \sigma;\langle \psi,e\rangle\le 1\}=\Conv(\{0\}\cup\{\frac{e_i}{a_i};a_i>0\})+\sum_{a_i=0}\R_{\ge 0}e_i.
$$ 
The cone $\sigma_0=\sum_{a_i=0}\R_{\ge 0}e_i$ coincides with the face $\sigma\cap \psi^\perp$ of $\sigma$.
The notations are compatible with those in section 3, since we may view the germ of singularity as a germ of fibration
$(X/X\ni P,B)$, where $X\to X$ is the identity. For $t>0$, property $(C_t)$ is equivalent to $\mld_P(X,B)\ge t$, or 
$\min\{\langle \psi,e\rangle;e\in N\cap \Int\sigma\}\ge t$, or 
$N\cap \Int(tU)=\emptyset$.
Theorem~\ref{bndser} takes the following form:

\begin{thm}\label{bndgerm}
	Fix $d\ge 1$ and an ACC set $\cA\subset (0,+\infty)$. 
	Let $P\in (X,B)$ be a germ of toric log variety, let $t>0$. Suppose $\dim X=d$ and $\frac{a_i}{t}\in \cA$ for all $i$. The following are equivalent:
	\begin{itemize}
		\item[a)] $\mld_P(X,B)\ge t$.
		\item[b)] There exists a non-zero surjective homomorphism of lattices $\Phi\colon N\to N'$ such that the image $U'=\Phi(U)\subset N'_\R$
		is a compact polytope, $N'\cap \Int(tU')=\emptyset$ and $(N',tU')$ is bounded.
	\end{itemize}
\end{thm}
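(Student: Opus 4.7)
The plan is to deduce Theorem~\ref{bndgerm} directly from Theorem~\ref{bndser} by recasting the germ of singularity $P\in (X,B)$ as the degenerate fibration given by $f=\id_X\colon X\to X$, with $Y=X$, $\bar N=N$, $\bar\sigma=\sigma$ and $\Delta$ the fan of faces of $\sigma$. Since $X$ is affine with invariant point $P$, the cone $\sigma$ is strongly convex of full dimension $d\ge 1$, so $\dim Y=d>0$ and we are squarely within the hypotheses of Theorem~\ref{bndser}, away from the exceptional case $Y=P$ flagged in Lemma~\ref{chRco}.1).

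Next I would check that property $(C_t)$ for this identity fibration is equivalent to the local mld condition $\mld_P(X,B)\ge t$. Since $K+B=(\chi^{-\psi})$ is already $\R$-linearly trivial (and trivially $f$-semiample), one direction is immediate by taking $B^+=B$, while the other follows from monotonicity of log discrepancies in the boundary: $\mld_P(X,B)\ge \mld_P(X,B^+)\ge t$. Alternatively, Lemma~\ref{semimld} gives that both conditions are equivalent to $N\cap \Int(tU)\subset \{0\}$, and since $\sigma$ has full dimension the origin is a vertex of $U$, so $0\in \partial U$ and the inclusion simplifies to $N\cap \Int(tU)=\emptyset$.

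Having matched the hypotheses and identified condition a) with $(C_t)$, I would then invoke Theorem~\ref{bndser} to produce the non-zero surjective homomorphism $\Phi\colon N\to N'$ satisfying all the properties required in b). The converse b) $\Longrightarrow$ a) is automatic, since $\Phi$ sends $\Int(tU)$ into $\Int(tU')$, so $N'\cap \Int(tU')=\emptyset$ pulls back to $N\cap \Int(tU)=\emptyset$, i.e.\ $\mld_P(X,B)\ge t$.

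There is no real obstacle: Theorem~\ref{bndgerm} is a pure specialization and all substantive work — both the combinatorial reduction through Theorem~\ref{SerPoly} and the treatment of non-compact $U$ via the projection $g\colon N\to N_0=N/N\cap(\sigma_0-\sigma_0)$ when some $a_i=0$ — is already carried out inside the proof of Theorem~\ref{bndser}. The only task is bookkeeping: verifying that the germ-of-singularity setup of this section is literally the ``germ of fibration with $f=\id$'' setup of Section 3, which is precisely what the preamble to the theorem records.
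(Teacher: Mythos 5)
Your proposal is correct and follows the paper's own route exactly: Section 5 explicitly recasts the germ $P\in(X,B)$ as the fibration $(X/X\ni P,B)$ with $f=\id$, observes that $(C_t)$ is equivalent to $\mld_P(X,B)\ge t$ (since $K+B=(\chi^{-\psi})\sim_\R 0$ allows $B^+=B$ in one direction, and monotonicity gives the other), and then states Theorem~\ref{bndgerm} as a direct specialization of Theorem~\ref{bndser} without a separate proof. Your write-up of the bookkeeping (checking $\dim Y=d>0$, $0\in\partial U$, and the match between $\Delta(1)$ and $\sigma(1)$) is precisely what the paper leaves implicit.
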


Note that while $0$ is an extremal point of $U$, the image $U'$ contains $0$ on its boundary, but usually not as an extremal point. We call $\Phi\colon (N,tU)\to (N',tU')$ a {\em $t$-lc reduction} of $P\in (X,B)$. If we denote $a(N',U')=\max\{t>0;N' \cap \Int(tU')=\emptyset \}$, then 
$$
\mld_P(X,B)=a(N,U) \ge a(N',U')\ge t.
$$

If $a_i=0$, then $\Phi(e_i)=0$. Suppose $\Phi(e_i)\ne 0$. Then $\Phi(e_i)=q_ie'_i$ with $q_i\in \Z_{\ge 1}$ and $e'_i\in {N'}^{prim}\cap \sigma'$. Here $\sigma'=\cup_{t\ge 0}tU'\subset N'_\R$ is the image of 
$\sigma=\cup_{t\ge 0}tU$. We have 
$$
\frac{\Phi(e_i)}{a_i}=\frac{e'_i}{a'_i}, a'_i=\frac{a_i}{q_i}.
$$
The integers $q_i$ belong to a finite set, $\frac{a'_i}{t}$ belong to the ACC set $\cA\cdot \{\frac{1}{q};q\in \Z_{\ge 1}\}$ and are bounded away from zero, and $\{e'_i;\Phi(e_i)\ne 0\}$ belong to a finite set up to $\Aut(N')$. In particular, $(N',\sigma')$ belongs to finitely many isomorphism types. If $\cA$ may only accumulate to $0$, then $\frac{a'_i}{t}$ belong to a finite set and $(N',tU')$ belongs to finitely many isomorphism types.

\begin{thm}\label{ses}
	Fix $d\ge 1$ and an ACC set $\cA\subset (0,+\infty)$. 
	Let $P\in (X,B)$ be a germ of toric log variety with $a:=\mld_P(X,B)>0$. Suppose $\dim X=d$ and $\frac{a_i}{a}\in \cA$ for all $i$. Then:
	\begin{itemize}
		\item[1)] $\frac{a_i}{a}$ belong to a finite set.
		\item[2)] Let $\Phi\colon (N,aU)\to (N',aU')$ be an $a$-lc reduction. Then 
		$(N',aU')$ belongs to finitely many isomorphism types, and corresponds to an affine toric log variety with invariant point 
		with $\mld_{P'}(X',\sum_j (1-a'_j)E'_j)=a$ and $\frac{a'_j}{a}\in \cA\cdot\{\frac{1}{q};q\in \Z_{\ge 1}\}$ for all $j$.
	\end{itemize}
	
\end{thm}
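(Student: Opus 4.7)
The plan is to apply Theorem~\ref{bndgerm} with $t=a$ and to leverage the detailed structural analysis of $a$-lc reductions developed in the paragraphs immediately following that theorem. Since $a=\mld_P(X,B)>0$, Theorem~\ref{bndgerm} produces an $a$-lc reduction $\Phi\colon (N,aU)\to(N',aU')$ with $(N',aU')$ bounded. That analysis already gives: $(N',\sigma')$ belongs to finitely many isomorphism classes; the integers $q_i$ with $\Phi(e_i)=q_ie'_{j(i)}$ when $\Phi(e_i)\ne 0$ lie in a finite set depending only on $d$ and $\cA$; the primitive vectors $e'_j$ spanning the rays of $\sigma'$ lie in a finite set up to $\Aut(N')$; and the ratios $a'_j/a$ lie in the ACC set $\cA'=\cA\cdot\{1/q;\ q\in\Z_{\ge 1}\}$ and are bounded away from zero by the boundedness of $(N',aU')$ combined with primitivity of the $e'_j$.

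The first substantive step identifies $(N',aU')$ with an affine toric log variety with invariant point. The cone $\sigma'=\Phi(\sigma)$ is top-dimensional (since $\Phi$ is surjective) and rational polyhedral; one checks that it is strongly convex by verifying that $\Ker\Phi\cap\sigma$ is contained in a proper face of $\sigma$, a consequence of the lc-reduction construction (iterated projection along short lattice vectors). Thus $X'=T_{N'}\emb(\sigma')$ is an affine toric variety with invariant point $P'$, and $B'=\sum_j(1-a'_j)E'_j$ is an invariant boundary whose moment polytope coincides with $\square'=\{m'\in M'_\R;\ \Phi^*(m')\in\square\}$. The inequality $\mld_{P'}(X',B')\ge a$ is precisely the $a$-lc property of the reduction. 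For equality, choose $e^*\in N^{prim}\cap\Int\sigma$ realizing $\langle\psi,e^*\rangle=a$; then $\Phi(e^*)\in N'\cap\Int\sigma'\cap\partial(aU')$, and writing $\Phi(e^*)=qe'$ with $e'\in(N')^{prim}$, the lower bound $\mld_{P'}\ge a$ forces $q=1$, so $\Phi(e^*)$ is primitive and witnesses $\mld_{P'}(X',B')=a$.

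The heart of the argument is part (1), which I would prove by a noetherian/limiting argument driven by the ACC hypothesis. For each $i$ with $\Phi(e_i)\ne 0$ we have $a_i/a=q_i\cdot(a_i/q_i)/a$ with $q_i$ in a fixed finite set and $(a_i/q_i)/a\ge a'_{j(i)}/a$ bounded below, which, combined with the ACC upper bound on $\cA$, places $a_i/a$ in a bounded subset of $\cA$. To upgrade from ACC-bounded to actually finite, and to handle the case $\Phi(e_i)=0$, assume for contradiction a sequence $(X_n,B_n)$ in our class with some $a_{n,i_n}/a_n$ escaping every fixed finite subset of $\cA$. After passing to a subsequence, the image data $(N',\sigma',\{e'_j\},\{q_j\})$ stabilizes; then the relation $1=\sum_i c_{n,i}(a_{n,i}/a_n)$ coming from $\langle\psi_n,e^*_n\rangle=a_n$, with the coefficients $c_{n,i}$ ranging over a finite set determined by the fixed image and the possible choices of $e^*_n$, together with the ACC-bounded structure, forces a strictly increasing sequence inside $\cA$, contradicting its ACC property.

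Once (1) is established, part (2) follows at once: the non-zero extremal points of $aU'$ are $e'_j/(a'_j/a)$ with $e'_j$ in a finite set up to $\Aut(N')$ and $a'_j/a=(a_{i}/a)/q_i$ ranging over the finite set produced by (1) divided by the finite set of values of $q_i$, so $(N',aU')$ admits finitely many isomorphism types. The main obstacle is the finiteness step in (1): ACC alone does not prevent accumulation strictly from above in a bounded window, so the crucial input is the strictly monotone sequence extracted from the constraint $\sum c_{n,i}(a_{n,i}/a_n)=1$, which is where the specific geometry of the germ feeds into the ACC hypothesis.
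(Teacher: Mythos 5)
Your outline for part (2) is essentially sound and parallels the paper, though you leave implicit the single key fact the paper proves first, namely that $\psi$ factors through $\Phi$, giving $\psi=\psi'\circ\Phi$ for a linear form $\psi'$ on $N'_\R$: this is what makes $K_{X'}+B'$ $\R$-Cartier, makes the log discrepancy of $E_{e'}$ equal to $\langle\psi',e'\rangle$ (which you use to force $q=1$), and gives strong convexity of $\sigma'$ immediately, since $U'=\{e'\in\sigma';\langle\psi',e'\rangle\le 1\}$ compact forces $\psi'\in\Int({\sigma'}^\vee)$. Your alternative route to strong convexity via ``$\Ker\Phi\cap\sigma$ contained in a proper face'' is both unjustified (the $a$-lc reduction from Theorem~\ref{SerPoly} is not literally iterated projection along short vectors) and not obviously sufficient on its own.

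The real gap is in part (1). Your noetherian argument posits a relation $1=\sum_i c_{n,i}(a_{n,i}/a_n)$ ``coming from $\langle\psi_n,e^*_n\rangle=a_n$'' with $c_{n,i}$ ranging over a finite set, but you never establish either feature of that relation, and both are the heart of the matter. To get a relation at all one must express $\Phi(e^*)$ in terms of the $\Phi(e_i)$; to make it usable one must (a) ensure a \emph{prescribed} index $i_0$ (the one you want to control) actually appears with a positive coefficient, and (b) bound the coefficients to a finite set. The paper's proof does exactly this: starting from $\Phi(e_{i_0})\ne 0$ it follows the ray from $e'=\Phi(e^*)$ in direction $-\Phi(e_{i_0})$ to a boundary point $v'$ of $\sigma'$, applies Carath\'eodory in the minimal face containing $v'$ to extract linearly independent $e'_j$'s, and so builds $I\ni i_0$ with $e'=\sum_{i\in I}x_i\Phi(e_i)$, $\{\Phi(e_i)\}_{i\in I}$ linearly independent, and $x_i>0$; then the minimality of $\langle\psi',e'\rangle$ among interior lattice points gives $x_i\le 1$, and Cramer's rule on the bounded integral data $(N',\{\Phi(e_i)\},e')$ gives $rx_i\in\Z$ for bounded $r$, hence $x_i$ finite. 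Only with these two ingredients can the equation $1=\sum_{i\in I}x_i\frac{a_i}{a}$ plus ACC force $\frac{a_{i_0}}{a}$ into a finite set. Your compactness/subsequence framing does not supply either ingredient, so as written the argument does not go through.
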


\begin{proof} 2) Let $\Phi\colon (N,aU)\to (N',aU')$ be an $a$-lc reduction. We claim that there exists a linear form $\psi'\colon N'_\R \to \R$
	such that $\psi=\psi'\circ \Phi$. Indeed, by assumption $N\cap \Int(aU)=\emptyset$ and there exists $e\in N^{prim}\cap \Int\sigma$ with
	$\langle \psi,e\rangle=a$. Thus $e$ belongs to the unique proper face of $aU$ which does not contain the origin. Let $V=\Ker(N_\R\to N'_\R)$.
	We have $(e+V)\cap \Int(aU)=\emptyset$, since the $\Phi$-image is contained in $\{\Phi(e)\}\cap \Int(aU')\subset N'\cap \Int(aU')=\emptyset$.
	Let $v\in V$. Since $e\in \Int\sigma$, there exists $\epsilon>0$ such that
	$e\pm \epsilon v \in \Int\sigma$. The condition $e\pm \epsilon v\notin \Int(aU)$ becomes $\langle \psi,e\pm \epsilon v\rangle\ge a$, i.e. 
	$\langle \psi,v\rangle=0$. Therefore $\psi\colon N_\R\to \R$ vanishes on $\Ker\Phi$.
	
	We obtain $U'=\{e'\in \sigma';\langle \psi',e'\rangle\le 1\}$. 
	Since $U'$ is compact, $\{0\}=\{e'\in \sigma';\langle \psi',e'\rangle=0\}$.
	Therefore $\psi'\in \Int({\sigma'}^\vee)$ and $0$ is a face of $\sigma'$.
	Note that $\sigma'-\sigma'=N'_\R$, since $\sigma-\sigma=N_\R$.
	
	Denote $e'=\Phi(e)\in N\cap \Int\sigma'$. We have $\langle \psi',e'\rangle=\langle \psi,e\rangle=a$. Therefore $e'$ belongs to the unique
	proper face of $aU'$ which does not contain $0$. In particular, $e'\in {N'}^{prim}\cap aU'$.
	
	Let $\sigma'(1)=\{e'_j;j\}$. Fix $j$ and denote $a'_j=\langle \psi',e'_j\rangle$. 
	There exists $i$ such that $\Phi(\R_+e_i)=\R_+e'_j$. Then 
	$\Phi(e_i)=q_ie'_j$ for some $q_i\in \Z_{\ge 1}$, which is finite from the above argument.
	We obtain $a'_j=\frac{a_i}{q_i}$.
	
	Note that since $\psi$ factors through $\psi'$, for each $e_i\in \sigma(1)$ we have $a_i>0$ if and only if $\Phi(e_i)\ne 0$.

	Set $X'=T_{N'}\emb(\sigma')$, $B'=\sum_j(1-a'_j)E'_j$ and $P'$ the unique closed point of $X'$.
	
	1) Fix $e_{i_0}\in \sigma(1)$ with $a_{i_0}>0$. There exists a subset $i_0\in I\subset \sigma(1)$ such that 
	$\{\Phi(e_i);i\in I\}$ are linearly independent and $e'=\sum_{i\in I}x_i\Phi(e_i)$ for some $x_i>0$. 
	Indeed, $e'\in \Int\sigma'$ and $\Phi(e_{i_0})\in \sigma'\setminus 0$ gives $v':=e'-x_{i_0}\Phi(e_{i_0})\in \partial\sigma'$
	for some $x_{i_0}>0$. Let $\tau'$ be the face of $\sigma'$ which contains $v'$ in its relative interior.
	Then $v'=\sum_{j\in J}x_je'_j$ where $\{e'_j;j\in J\}\subset \tau'(1)$ is a subset of linearly independent elements
	and $x_j>0$ for all $j\in J$. Since $\Phi(e_{i_0})\notin \tau'$, it follows that $\Phi(e_{i_0}), e'_j (j\in J)$ are 
	linearly independent. For each $j\in J$, there exists $e_{i(j)}\in \sigma(1)$ such that $\Phi(e_{i(j)})=q_je'_j$
	for some $q_j\in \Z_{\ge 1}$. Set $I=\{i_0\} \cup \{i(j);j\in J\}$ and $x_{i(j)}=\frac{x_j}{q_j}$.
	This proves the claim.
	
	We have $e'=\sum_{i\in I}x_i\Phi(e_i)$ with $x_i>0$ for all $i\in I$. Since 
	$\langle \psi',e'\rangle=\min\{\langle \psi',y\rangle; y\in N'\cap \Int\sigma'\}$,
	we have $e'-\Phi(e_i)\notin \Int\sigma'$, hence $x_i\le 1$. Therefore $x_i\in (0,1]$ for all $i\in I$.
	In some basis of $N'$, the vectors $e'$ and $\Phi(e_i)$ have finitely many coordinates. By Cramer's rule, we obtain
	$rx_i\in \Z$ for all $i\in I$, where $r$ is a bounded positive integer. Therefore $\{x_i;i\in I\}$ is contained
	in a finite set.
	
	We have $\Phi(e)=\sum_{i\in I}x_i\Phi(e_i)$. Therefore $a=\sum_{i\in I}x_ia_i$, i.e.
	$$
	1=\sum_{i\in I}x_i\frac{a_i}{a}.
	$$
	Since $x_i$ are finite and $\frac{a_i}{a}$ belong to an ACC set, it follows that $\frac{a_i}{a}$ belong to a finite set.
	Therefore $\frac{a_{i_0}}{a}$ belongs to a finite set.
\end{proof}

\begin{cor} 
	Let $P\in (X,B)$ be an affine toric log variety with invariant point. Suppose $\dim X=d$ is fixed and the coefficients of $B$ belong to a fixed DCC set. Then:
	\begin{itemize}
		\item[1)] $\mld_P(X,B)$ belongs to an ACC set.
		\item[2)] Suppose $\mld_P(X,B)=t$ is fixed. Then the coefficients of $B$ belong to a finite set. If moreover $B$ has rational coefficients, then $n(K+B)\sim 0$ for some bounded positive integer $n$.
	\end{itemize}
\end{cor}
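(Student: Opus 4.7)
For part (1), the plan is a proof by contradiction. Write $\cA_0=\{1-b:b\in\cB\}\subset[0,1]$, which is an ACC set since $\cB$ is DCC, so the coefficients $a_i$ of each admissible germ lie in $\cA_0$. Suppose for contradiction that there is an infinite strictly increasing sequence $a^{(n)}:=\mld_{P^{(n)}}(X^{(n)},B^{(n)})$. Dropping an initial segment we may assume $a^{(n)}\ge a^{(1)}>0$ for all $n$. The first step is to verify that the collection of ratios $\{a_i^{(n)}/a^{(n)}\}_{i,n}$ forms an ACC set: any strictly increasing sequence $\alpha_k=a_{i_k}^{(n_k)}/a^{(n_k)}$ reduces by pigeonhole either to a fixed $n$ (where only finitely many ratios exist) or to the case $n_k\to\infty$ along a subsequence, in which case $a^{(n_k)}$ remains strictly increasing while $a_{i_k}^{(n_k)}=\alpha_k a^{(n_k)}$ becomes strictly increasing in $\cA_0$, contradicting ACC of $\cA_0$.

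Armed with this uniform ACC, I would apply Theorem~\ref{ses}.1) to each germ: the ratios $a_i^{(n)}/a^{(n)}$ fall into a uniform finite set, and the proof of Theorem~\ref{ses}.1) yields the identity $a^{(n)}=\sum_{k=1}^{p^{(n)}}y_k^{(n)}a_{i_k}^{(n)}$ with $p^{(n)}\le d$ and $y_k^{(n)}\in(0,1]\cap\frac{1}{r}\Z$ for some bounded $r$. Pass to a subsequence on which $p^{(n)}=p$ and $(y_k^{(n)})=(y_1,\ldots,y_p)$ are constant, then successively (one $k$ at a time, exploiting ACC of $\cA_0$) pass to further subsequences on which each $(a_{i_k}^{(n)})_n$ is non-increasing. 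Since $y_k>0$, this forces $a^{(n)}=\sum_k y_k a_{i_k}^{(n)}$ non-increasing on the final subsequence, contradicting strict increase.

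For (2)(i), apply Theorem~\ref{ses}.1) with ACC set $\cA=\cA_0/t$: the values $a_i/t$ are in a finite set depending on $d,t,\cB$, hence so are the $a_i$ and the coefficients $b_i=1-a_i$. For (2)(ii), with $B$ rational the coefficients belong to a finite set of rationals of bounded denominator. Use Theorem~\ref{ses}.2) with this finite (hence non-accumulating) $\cA$: a $t$-lc reduction $\Phi\colon(N,tU)\to(N',tU')$ exists whose target corresponds to an affine toric log germ $(X',B'=\sum_j(1-a'_j)E'_j,P')$ lying in finitely many isomorphism classes, with coefficients $a'_j=a_i/q_i$ rational of bounded denominator, and the log discrepancy function of $(X,B)$ factors as $\psi=\Phi^*(\psi')$ (proved in the proof of Theorem~\ref{ses}.2)). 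Finiteness of the isomorphism class of $(N',\sigma',\{a'_j\})$ bounds the smallest integer $n\ge 1$ with $n\psi'\in M'$; pulling back, $n\psi=\Phi^*(n\psi')\in\Phi^*(M')\subseteq M$, so $n(K+B)=-(\chi^{n\psi})\sim 0$ with $n=n(d,t,\cB)$.

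The main obstacle is the ACC derivation for ratios in part (1): Theorem~\ref{ses} is set up precisely to control such ratios, but its ACC hypothesis is not a priori satisfied here since $a=\mld_P$ is itself the quantity being controlled. The resolution is the monotonicity argument above, which bootstraps ACC for the ratios from ACC of $\cA_0$ and the hypothetical strict increase of $a^{(n)}$.
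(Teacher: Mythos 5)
Your proposal is correct and takes essentially the same route as the paper: both hinge on first showing that the ratios $a_i/a$ form an ACC set (you argue this by strict-increase contradiction, the paper by the direct observation that ACC numerator over DCC denominator gives ACC quotient), then invoking Theorem~\ref{ses} with that ACC set as input. The only real difference is in the final step of part~1): after getting the ACC of the ratios, the paper immediately applies the \emph{conclusion} of Theorem~\ref{ses}.1) (ratios lie in a finite set), then notes that for $c=a_i/a$ in a finite set and $a_i$ ACC we get $a=a_i/c$ ACC, so $a$ being both ACC and DCC forces finiteness; you instead re-open the \emph{proof} of Theorem~\ref{ses}.1) to extract the identity $a=\sum_k y_k a_{i_k}$ with $y_k$ in a bounded discrete set, and then run a subsequence/monotonicity argument. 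Both get there; the paper's closing is shorter and only uses the statement of Theorem~\ref{ses}.1) as a black box, which you could have done as well. For part~2)(ii) the paper simply observes that, because $\psi=\Phi^*(\psi')$ and $\Phi\colon N\to N'$ is surjective, the Cartier index of $(X,B)$ \emph{equals} that of the reduction $(X',B')$, which lies among finitely many isomorphism types; your version only records the inequality (bounded index for $(X',B')$ pulls back to bound the index of $(X,B)$), which suffices but is slightly weaker than what actually holds.
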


\begin{proof} Suppose $\mld_P(X,B)=a>0$ belongs to a DCC set.
Let $B=\sum_i(1-a_i)E_i$. Since $a_i$ satisfy ACC, $\frac{a_i}{a}$ belongs to an ACC set. By Theorem~\ref{ses}, $\frac{a_i}{a}$ belong to a finite set. Since only finite sets satisfy both ACC and DCC, $a_i,a$ belong to a finite set.

Suppose moreover $B$ has rational coefficients. Let $n\ge 1$ be minimal
such that $n(K+B)\sim 0$.  By Theorem~\ref{ses}, $n$ is also the index of an $a$-lc reduction $P'\in (X',B')$, which belongs to finitely many isomorphism types. Therefore $n$ is bounded.
\end{proof}

Theorem~\ref{bndmult} takes the following form for germs of toric singularities: suppose $\mld_P(X,B=\sum_i(1-a_i)E_i)\ge t>0$, $\dim X=d$ is fixed and $\frac{a_i}{t}$ belongs to an ACC set. Then there exists an invariant hyperplane section $H\in |\fm_{X,P}|$ such that $\mld(X,B+\gamma H)\ge 0$ for some
$\gamma>0$ with $\frac{\gamma}{t}$ bounded away from zero.

Theorems~\ref{bndscoP} and~\ref{bndscogl} take the following form for germs of toric singularities: suppose $P\in (X,B)$ has $r$-hyperstandard boundary. If $\mld_P(X,B)\ge t$ (resp. $\mld(X,B)\ge t$), there exists
$B^+\ge B$ such that $n(K+B^+)\sim 0$ and $\mld_P(X,B^+)\ge t$ (resp. $\mld(X,B^+)\ge t$), where $n$ is a positive integer which depends only on $\dim X, r,t$.


\subsection{Case $X$ is $\Q$-factorial}

	
The affine toric variety $X=T_N\emb(\sigma)$ is $\Q$-factorial if and only if $\sigma(1)$ has exactly $d=\dim X$ elements.
Up to isomorphism, we may suppose $e_1,\ldots,e_d$ are the standard basis of $\Z^d$, and $N\subset \R^d$ is a $d$-dimensional
lattice which contains $e_i$ as primitive elements. Note that $N/\Z^d$ is a finite abelian group, $\sigma=\R_{\ge 0}^d$ and 
$$
U=\{x\in \R^d_{\ge 0};\sum_{i=1}^da_ix_i\le 1\},
\
tU=\{x\in \R^d_{\ge 0};\sum_{i=1}^d\frac{a_i}{t} x_i\le 1\}.
$$
We claim that Theorem~\ref{bndgerm} is equivalent to the following statement: let $t>0$, suppose $\frac{a_i}{t}$ belong to a 
fixed ACC set $\cA$. Then there exist finitely many closed subgroups 
$\Z^d \le G\le \R^d$ with $G\cap \Int(tU)=\emptyset$, such that $\mld_P(X,B)\ge t$ if and only if $N$ is contained in some $G$.

Indeed, let $(N,tU)\to (N',tU')$ be a $t$-lc reduction. We may find an isomorphism $N'\isoto \Z^p$ with $1\le p\le d$
such that $tU'$ is mapped into $[-\lambda,\lambda]^p$, where $\lambda=\lambda(d,\cA)$ is bounded above. 
Let $g\colon \R^d=N_\R\to N'_\R\isoto \R^p$ be the induced homomorphism, let $G=g^{-1}(\Z^p)$. Since $g$ maps
$\Int(tU)$ into $\Int(tU')$, we obtain $G\cap \Int(tU)=\emptyset$. Since $g(N)=\Z^p$, we obtain $\Z^d\le N\le G$.
Since $g$ maps $tU$ into $[-\lambda,\lambda]^p$ and $e_i\in a_iU$, the components of the matrix defining $g$ are integers $z$ with $| z | \le  \frac{a_i}{t}\lambda$. Since $\frac{a_i}{t}$ satisfy ACC, it is bounded above.
Therefore $| z|$ is bounded above. Therefore $g$ belongs to a finite set of maps, hence $G$ belongs to finitely many groups, depending only on $d$ and $\cA$.

Conversely, suppose $N$ is contained in some $G$ belonging to a finite family. Since $\Z^d\le G\le \R^d$, the dual group $G^*=\Hom_\Z(G,\Z)$ is 
a subgroup of $(\Z^d)^*$. It is non-zero since $tU$ is non-empty, hence 
$G\ne \R^d$. Since $(\Z^d)^*$ is a free abelian group of rank $d$, it follows that $G^*$ is a free abelian group of rank $p$, for some $1\le p\le d$. Since $G$ belongs to a finite family, we may find a basis 
$\varphi_1,\ldots,\varphi_p$ of $G^*$ such that their coordinates in the standard dual basis of $(\Z^d)^*$ have bounded absolute value. Let $g\colon \R^d\to \R^p$ be the surjective homomorphism with components $(\varphi_1,\ldots,\varphi_p)$. Since $G=G^{**}$, we have 
$G=g^{-1}(\Z^p)$. Let $U'=g(U)$. The assumption $G\cap \Int(tU)=\emptyset$
is equivalent to $\Z^p\cap \Int(tU')=\emptyset$. Since the entries of the matrix defining $g$ are bounded, $U$ is the convex hull of $0$ and 
$\frac{a_i}{t}e_i$, and $\frac{a_i}{t}$ are bounded above, it follows that $tU'$ is contained in $[-\lambda,\lambda]^p$ for some $\lambda=\lambda(d,\cA)$. Also, $\Z^p/g(\Z^d)$ is a finite abelian group, of cardinality bounded above. Therefore the same holds for its quotient
$\Z^p/g(N)$.  If we set $N'=g(N)\subseteq \Z^p$, then $g\colon (N,tU)\to (N',tU')$ is a $t$-lc reduction.



\begin{thebibliography}{Law91}


\bibitem{Amb16}
Ambro, F.,
{\em Variation of log canonical thresholds in linear systems.}
{Int. Math. Res. Not. IMRN 2016, no. 14, 4418 -- 4448.} 

\bibitem{Bir21}
Birkar, C.,
{\em Singularities of linear systems and boundedness of Fano varieties.} {Ann. of Math. (2) 193 (2021), no. 2, 347 -- 405.} 

 \bibitem{BC21}
Birkar, C.; Chen, Y.,
{\em Singularities on toric fibrations.} 
{(Russian) Mat. Sb. 212 (2021), no. 3, 20 -- 38; translation in Sb. Math. 212 (2021), no. 3, 288 -- 304.} 
 
\bibitem{BB92}
Borisov, A. A.; Borisov, L. A.,
{\em Singular toric Fano varieties.} 
{(Russian)  Mat. Sb.  {\bf 183}(2)  (1992), 134--141;  
translation in  Russian Acad. Sci. Sb. Math.  {\bf 75}(1) (1993), 277 -- 283.}

\bibitem{Bor99}
Borisov, A.,
{\em On classification of toric singularities.}
{Algebraic geometry, 9. J. Math. Sci. (New York) 94 (1999), no. 1, 
1111 -- 1113.}

\bibitem{Cas97}
Cassels, J. W. S.,
{\em An introduction to the geometry of numbers. Corrected reprint of the 1971 edition.}
{Classics in Mathematics. Springer-Verlag, Berlin, 1997.}
 
\bibitem{GL87}
Gruber, P. M.; Lekkerkerker, C. G.,
{\em Geometry of numbers.}
{2ed., North Holland mathematical library, vol 37, 1987.} 
 
\bibitem{HX15}
Hacon, C. D.; Xu, C., 
{\em Boundedness of log Calabi-Yau pairs of Fano type,}
{Math. Res. Lett. 22 no. 6 (2015), 1699–1716.}

 
\bibitem{KL88}
Kannan, R.; Lov\'asz, L.,
{\em Covering minima and lattice-point-free convex bodies.}
{Ann. of Math. (2) {\bf 128} (1988), no. 3, 577--602.}

\bibitem{Law91}
Lawrence, J.,
{\em Finite unions of closed subgroups of the $n$-dimensional torus.}
{DIMACS Series in Discrete Mathematics and Theoretical Computer Science,
Vol 4 (1991), 433--441.}

\bibitem{NZ11}
Nill, B.; Ziegler, G. M.,
{\em Projecting lattice polytopes without interior lattice points.}
{Math. Oper. Res. 36 (2011), no. 3, 462 -- 467.}

\bibitem{Pikh01}
Pikhurko, O.,
{\em Lattice points in lattice polytopes.}
{Mathematika {\bf 48} (1-2) (2001), 15--24.}

\bibitem{Sho04}
Shokurov, V.V.,
{\em Bounded complements: conjecture, examples and applications,}
{Oberwolfach Reports, 1 (2004) Report 31/2004, 1678 -- 1679.}

\bibitem{Sho20}
Shokurov, V.V.,
{\em Existence and boundedness of $n$-complements,}
{preprint arXiv:2012.06495 [math.AG] (2020).}



\end{thebibliography}
\end{document}